\newcommand{\ds}{\displaystyle}
\newcommand{\ben}{\begin{enumerate}}
\newcommand{\een}{\end{enumerate}}
\newcommand{\eq}[2][label]{\begin{equation}\label{#1}#2\end{equation}}
\newcommand{\av}[2]{\langle #1\rangle_{_{\scriptstyle #2}}}
\newcommand{\ave}[1]{\langle #1\rangle}
\newcommand{\bav}[2]{\big\langle #1\big\rangle_{_{\scriptstyle #2}}}
\newcommand{\sav}[2]{\langle #1\rangle_{_{\scriptscriptstyle #2}}}
\DeclareMathOperator\sign{sgn}
\newcommand{\AC}{A_\infty^C(Q)}
\newcommand{\ve}{\varepsilon}
\newcommand{\bel}[1]{\boldsymbol{#1}}
\newcommand{\ma}{Monge--Amp\`{e}re }
\newcommand{\BMO}{{\rm BMO}}
\def\sgn{\operatorname{sgn}}
\newcommand{\rn}{\mathbb{R}^n}
\newtheorem{theorem}{Theorem}[section]
\newtheorem{lemma}[theorem]{Lemma}
\newtheorem{corollary}[theorem]{Corollary}
\newtheorem*{theorem*}{Theorem}{\bf}{\it}
\newtheorem*{proposition*}{Proposition}{\bf}{\it}
\newtheorem*{observation*}{Observation}{\bf}{\it}
\newtheorem*{lemma*}{Lemma}{\bf}{\it}
\theoremstyle{definition}
\newtheorem{definition}[theorem]{Definition}
\theoremstyle{remark}
\newtheorem{remark}[theorem]{Remark}
\numberwithin{equation}{section}
\begin{document}

\title{The John--Nirenberg constant of $\BMO^p,$ $1\le p\le 2$}

\author{Leonid Slavin}
\address{University of Cincinnati}
\email{leonid.slavin@uc.edu}

\thanks{L. Slavin's research supported in part by the NSF (DMS-1041763)}

\subjclass[2010]{Primary 42A05, 42B35, 49K20}

\keywords{BMO, John--Nirenberg inequality, Bellman function}

\begin{abstract}
We compute the exact John--Nirenberg constant of $\BMO^p\big((0,1)\big)$ for $1\le p\le 2,$ which has been known only for $p=1$ and $p=2.$ We also show that this constant is attained in the weak-type John--Nirenberg inequality and obtain a sharp lower estimate for the distance in $\BMO^p$ to $L^\infty.$ These results rely on sharp $L^p$- and weak-type estimates for logarithms of $A_\infty$ weights, which in turn use the exact expressions for the corresponding Bellman functions.
\end{abstract}

\maketitle
\section{Preliminaries and main results}
\label{main_results}
By $\av{\varphi}Q$ we denote the average of a locally integrable function over a cube $Q\subset\rn$ with respect to the Lebesgue measure:
$$
\av{\varphi}Q=\frac1{|Q|}\int_Q\varphi.
$$
Fix a $p>0$ and let $\BMO^p$ be the (factor-)space
\eq[1]{
\BMO^p=\{\varphi\in L^1_{loc}\colon\quad\|\varphi\|_{\BMO^p}:=\sup_{\text{cube}~J}\av{|\varphi-\av{\varphi}J|^p}J^{1/p}<\infty\}.
}
If the supremum is taken over all cubes $J\subset\rn,$ we write $\BMO^p(\rn);$ if the supremum is over all subcubes $J$ of a fixed cube $Q,$ we write $\BMO^p(Q).$ If the context is clear or the statement applies to both cases, we write simply $\BMO^p.$ It is known that all $p$-based (quasi-)norms are equivalent, meaning that~\eqref{1} defines the same space for all $p>0.$ Thus, we can, and will, write simply $\BMO$ in the left-hand side of~\eqref{1}, keeping the symbol $\BMO^p$ for the norm.

A weight is an almost everywhere positive function. We say that a weight $w$ belongs to $A_\infty,$ $w\in A_\infty,$ if both $w$ and $\log w$ are locally integrable and the following condition holds:
\eq[1.1]{
[w]_{A_\infty}:=\sup_{\text{cube~}J}\av{w}Je^{-\av{\log w}J}<\infty.
}
We say that $w$ belongs to $A_2,$ $w\in A_2,$ if both $w$ and $w^{-1}$ are locally integrable and  
\eq[1.2]{
[w]_{A_2}:=\sup_{\text{cube~}J}\av{w}J\av{w^{-1}}J<\infty.
}
The quantities $[w]_{A_\infty}$ and $[w]_{A_2}$ are called the $A_\infty$- and the $A_2$-characteristics of $w,$ respectively.  As before,  we write $A_\infty(\rn),$ $A_\infty(Q),$ or simply $A_\infty,$ as appropriate, and similarly for $A_2.$ Clearly, $A_2\subset A_\infty.$ In fact, it is easy to show that $w$ is in $A_2$ if and only if both $w$ and $w^{-1}$ are in $A_\infty.$

As the title suggests, our headline results are for $\BMO^p.$ In large part, they arise as consequences of sharp estimates for logarithms of $A_\infty$ weights. In turn, these estimates rely on explicit Bellman functions, and that material is taken up in the next section. Here we first state the $\BMO^p$ results, then the $A_\infty$ results, and then explain how they are connected. 

Throughout the paper, we will use the following notation. Fix $C\ge1$ and let $\xi^\pm=\xi^\pm(C)$ be the two solutions of the equation 
\eq[xi2]{
e^{-\xi}=C(1-\xi):\quad-\infty<\xi^-\le0\le\xi^+<1.
}
Note that $\xi^\pm(1)=0$ and that $\xi^+$ is strictly increasing with $\lim_{C\to\infty}\xi^+(C)=1,$ while $\xi^-$ is strictly decreasing with $\lim_{C\to\infty}\xi^-(C)=-\infty.$

For $C>1,$ let
\eq[kc]{
k(C)=2\,\frac{(1-\xi^-)(1-\xi^+)}{\xi^+-\xi^-}\,(C-1)%=\frac2{e^{\xi^+}-e^{\xi^-}}\Big(1-\frac1C\Big)
}
and set $k(1)=0.$ It is a simple exercise to verify that $k$ is continuous on $[1,\infty)$ and that $\lim_{C\to\infty}k(C)=\frac2e.$ It is a little harder but still straightforward to check that $k$ is also strictly increasing. 

\subsection{Main results for $\BMO^p$}
The key fact about $\BMO$ is the John--Nirenberg inequality, originally proved in~\cite{jn}. We will need it in two forms:
\medskip

\noindent{\bf Weak-type form}: There are positive constants $c_0(p)$ and $K(p)$ such that if $\varphi\in\BMO,$ then for any cube $J$ and any $\lambda\ge0,$
\eq[jn1]{
\frac1{|J|}|\{t\in J: |\varphi(t)-\av{\varphi}J|\ge\lambda\}|\le K(p)e^{-c_0(p)\lambda/\|\varphi\|_{\scriptscriptstyle \BMO^p}}.
}
\noindent{\bf Integral form}: There exists $\ve_0(p)>0$ such that if $\varphi\in\BMO$ with $\|\varphi\|_{\scriptscriptstyle\BMO^p}\le\ve<\ve_0(p),$ then for any cube $J,$
\eq[jn2]{
\av{e^\varphi}J \le C(\ve,p)e^{\av{\varphi}{\scriptscriptstyle J}},
}
for some function $C$ of $\ve$ and $p.$

Of course, all these constants also depend on dimension, but most of our results are in dimension 1, so we will suppress that dependence. Importantly, and somewhat surprisingly, they also appear to depend on whether one considers $\BMO(\rn)$ or the local variant $\BMO(Q)$ (though clearly not on the exact cube $Q$ chosen, so one can always take $Q=(0,1)^n$). Let us reserve the names $\ve_0(p)$ and $C(\ve,p)$ for the best constants in~\eqref{jn2} in the case when $\BMO=\BMO((0,1)).$ We call $\ve_0(p)$ the John--Nirenberg constant of $\BMO^p((0,1)).$
In two instances below we will need to differentiate $\ve_0(p)$ from its counterpart for the line, which we will denote by $\ve^\mathbb{R}_0(p).$ It is easy to see that $\ve_0^\mathbb{R}(p)\ge\ve_0(p).$

It is a classical consequence of Gehring's theorem on self-improvement of reverse H\"older classes \cite{gehring} that $A_\infty$ weights self-improve and hence $\ve_0(p)$ is not attained in~\eqref{jn2} (see Theorem~\ref{t7} below for a sharp version of this result). Also, clearly
\eq[epsc]{
\ve_0(p)=\sup\{c_0(p): \eqref{jn1}\text{~holds for some}~K(p)\}.
}
However, it is far from clear whether $\ve_0(p)$ is attained as $c_0(p)$ in~\eqref{jn1} or what the best $K(p)$ might be if it is attained.

Observe that~\eqref{jn2} means that if $\varphi\in\BMO,$ then $e^{\ve\varphi}\in A_\infty$ for all sufficiently small $\ve>0.$ For $\varphi\in\BMO,$ let
\eq[ephi]{
\ve_\varphi=\sup\{\ve>0:~e^{\ve \varphi}\in A_\infty\}
}
(note that $\ve_\varphi$ can be infinity). Directly from the definitions of $\ve_0(p)$ and $\ve_{\varphi}$ we have
$$
\ve_0(p)\le\inf\{ \ve_\varphi>0:~\|\varphi\|_{\BMO^p}\!=\!1\}=\inf\{\ve>0:~\forall\varphi~\|\varphi\|_{\BMO^p}\!=\!1~\Longrightarrow ~e^{\ve\varphi}\in A_2\}.
$$
The inequality on the left is obvious; we actually show in these pages that it holds as equality, at least for a range of $p.$ The identity on the right holds because the set $\{ \ve_\varphi>0:~\|\varphi\|_{\BMO^p}\!=\!1\}$ contains both $\ve_\varphi$ and $\ve_{-\varphi}$ for each $\varphi$ with $\|\varphi\|_{\BMO^p}\!=\!1.$

We have two main tasks: to determine the exact value of $\ve_0(p)$ and to show that $\ve_0(p)$ is attained as $c_0(p)$ in~\eqref{jn1}. Currently, our solutions to these problems are limited to the range $1\le p\le2$ for the first task, and $1<p\le 2$ for the second. The only known results up to this point have been those for $p=1$ and $p=2.$ Specifically, Korenovskii \cite{korenovsky} found $\ve_0(1)=\frac2e$ and showed that it is attained in~\eqref{jn1}; Lerner \cite{lerner} showed that the best $K(1)$ in~\eqref{jn1} for $c_0(1)=\frac2e$ is $\frac12e^{4/e}.$ Vasyunin and the author \cite{sv} found $\ve_0(2)=1$ and $C(\ve,2)=\frac{e^{-\ve}}{1-\ve};$ and Vasyunin and Volberg \cite{vv} showed that $c_0(2)=1$ is attained in~\eqref{jn1} with the best $K(2)=e.$ 

Here is our main theorem for $\BMO^p.$ 
\begin{theorem}
\label{t1}
For $p\in[1,2]$,
\eq[maineps]{
\ve_0(p)=\left[\frac pe\Big(\Gamma(p)-\int_0^1t^{p-1}e^t\,dt\Big)+1\right]^{1/p}.
}
Furthermore,
if $1<p\le 2,$ then for all $(2-p)\ve_0(p)\le \ve<\ve_0(p),$
\eq[cep]{
C(\ve,p)=\frac{e^{-\ve/\ve_0(p)}}{1-\ve/\ve_0(p)};
}
and for all $0\le\ve < \frac{2}e,$  
\eq[ce1]{
\frac{e^{-\frac e2\ve}}{1-\frac e2\ve}\le C(\ve,1)\le k^{-1}(\ve),
}
where $k^{-1}:[0,\frac2e)\to[1,\infty)$ is the inverse function to $k.$
\end{theorem}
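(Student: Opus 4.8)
\emph{Proof strategy.} The plan is to route both parts of the theorem through the $A_\infty$ world. A weight $w$ satisfies $\av wI\le Ce^{\av{\log w}I}$ on every subinterval $I\subset(0,1)$ precisely when $[w]_{A_\infty}\le C$; writing $\varphi=\log w$, subtracting its mean and rescaling the worst subinterval to all of $(0,1)$, one sees that the best constant in \eqref{jn2} is
\[
C(\ve,p)=\sup\Big\{\av w{(0,1)}\ :\ \av{\log w}{(0,1)}=0,\ \|\log w\|_{\BMO^p((0,1))}\le\ve\Big\},
\]
and consequently $\ve_0(p)=\sup\{\ve>0:\ C(\ve,p)<\infty\}$. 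So everything reduces to computing this supremum as a function of $\ve$ and $p$ and locating its blow-up point. I would set this up as a Bellman extremal problem: a rearrangement reduces the competitors to monotone $\log w$, after which the supremum is described by a Bellman function of two variables on a planar domain whose curved boundary encodes the $\BMO^p$-constraint; the usual concavity-under-splitting dynamics force this function to solve a homogeneous \ma equation and to be linear along a foliation of the domain by line segments, with prescribed boundary value $e^{x_1}$ on the no-oscillation part of the boundary.

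The technical heart is to identify the foliation explicitly. I expect the extremal trajectories near the critical boundary to be the Bellman images of truncated power weights $t^{\beta}$, for which $\av{t^\beta}{(0,1)}=\tfrac1{\beta+1}$, $\av{\log t^\beta}{(0,1)}=-\beta$, and $\|\beta\log t\|_{\BMO^p((0,1))}=|\beta|\,\|\log t\|_{\BMO^p((0,1))}$; the parameter runs up to the value at which two families of characteristics first meet. Two elementary computations then close the $\ve_0$ part. First, one verifies that $\|\log t\|_{\BMO^p((0,1))}$ is attained on subintervals of the form $(0,s)$ and, after the substitution $v=-\log t$, equals $\big(\int_0^\infty|1-v|^pe^{-v}\,dv\big)^{1/p}$, which after one integration by parts coincides with the bracket in \eqref{maineps} --- this is also the transparent way to recover $\ve_0(1)=\tfrac2e$ and $\ve_0(2)=1$. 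Second, $C(\ve,p)<\infty$ exactly while $\ve<\|\log t\|_{\BMO^p((0,1))}$, which is what the collision of characteristics detects; hence $\ve_0(p)=\|\log t\|_{\BMO^p((0,1))}$, the inequality $\ve_0(p)\le\|\log t\|_{\BMO^p((0,1))}$ being immediate because $t^{\gamma}\notin A_\infty((0,1))$ for $\gamma\le-1$. Feeding the weight $t^{-\ve/\ve_0(p)}$ back into the displayed formula gives $C(\ve,p)\ge\frac{e^{-\ve/\ve_0(p)}}{1-\ve/\ve_0(p)}$ for all admissible $\ve$; in the range $(2-p)\ve_0(p)\le\ve<\ve_0(p)$ the foliation is ``simple'' (one family, so that $t^{-\ve/\ve_0(p)}$ is genuinely extremal) and the matching upper bound from the Bellman function yields the equality \eqref{cep}. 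For $p=1$ this simple range is empty, so one keeps the power-weight lower bound $\frac{e^{-\frac e2\ve}}{1-\frac e2\ve}$ and, for the upper bound, combines $\av w{(0,1)}=\av w{(0,1)}e^{-\av{\log w}{(0,1)}}\le[w]_{A_\infty}$ with the sharp relation between $[w]_{A_\infty}$ and $\|\log w\|_{\BMO^1}$ encoded by $k$ in \eqref{kc} (note $k(1)=0$ and $\lim_{C\to\infty}k(C)=\tfrac2e=\ve_0(1)$), obtaining $C(\ve,1)\le k^{-1}(\ve)$.

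The main obstacle is precisely the construction and verification of the \ma foliation when $p\ne2$. For $p=2$ the constraint domain $\{x_2-x_1^2\le\ve^2\}$ is a parabolic strip that is self-similar under the natural scaling, the foliation is a single explicit pencil of segments, and the clean formula holds on the whole sub-critical interval; for $1\le p<2$ the oscillation $\av{|\log w-\av{\log w}I|^p}I$ is no longer a polynomial in the moments of $\log w$, the domain loses self-similarity, and the free boundary separating the ``simple'' and the remaining regimes of the foliation is only implicitly defined --- tracking it is what confines the exact formula \eqref{cep} to $\ve\ge(2-p)\ve_0(p)$ and the whole analysis to $1\le p\le2$. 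A secondary bookkeeping point is the passage between $\BMO^p((0,1))$ and $\BMO^p(\mathbb{R})$, needed when invoking $k$, since $\ve_0^{\mathbb{R}}(p)\ge\ve_0(p)$ and the two can differ.
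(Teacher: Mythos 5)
Your lower bounds are fine and coincide with the paper's: the power weight $t^{-\ve/\omega(p)}$ (equivalently $\varphi_0(t)=\log(1/t)$) yields $\ve_0(p)\le\omega(p)$ and $C(\ve,p)\ge e^{-\ve/\omega(p)}/(1-\ve/\omega(p))$, and your computation of $\|\log(1/t)\|_{\BMO^p((0,1))}$ agrees with \eqref{maineps} --- though the claim that this norm is attained on intervals of the form $(0,s)$ is not automatic and requires the monotonicity argument carried out in Lemma~\ref{phi0}. The genuine gap is in the converse direction. You propose to compute $C(\ve,p)=\sup\{\av{w}{(0,1)}:\ \av{\log w}{(0,1)}=0,\ \|\log w\|_{\BMO^p}\le\ve\}$ as a planar Bellman function whose domain encodes the $\BMO^p$ constraint and which is forced by ``concavity-under-splitting dynamics'' to satisfy the homogeneous \ma equation and be affine along a foliation. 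For $p\ne2$ this setup does not exist: the oscillation $\av{|\varphi-\av{\varphi}{I}|^p}{I}$ is not expressible through interval averages that combine as convex combinations when $I$ is split into $I_-\cup I_+$, so the Bellman point of $I$ is not a convex combination of those of $I_\pm$, no concavity inequality under splitting is available, and the function satisfies no apparent PDE. You do observe that the functional ``is no longer a polynomial in the moments,'' but you treat this as a complication in identifying the foliation rather than as the obstruction that invalidates the entire direct formulation. A secondary unsupported step is the assertion that a rearrangement reduces competitors to monotone $\log w$ without loss of sharpness; the known rearrangement techniques do not produce sharp constants for $p\ne 1$.

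The paper's actual route is dual and avoids this entirely: it fixes the $A_\infty$ characteristic $[e^\varphi]_{A_\infty}\le C$ as the constraint --- the variables $\av{\varphi}{I}$ and $\av{e^\varphi}{I}$ do split linearly, so the domain $\Omega_C=\{e^{x_1}\le x_2\le Ce^{x_1}\}$ supports genuine Bellman induction --- and takes $\av{|\varphi|^p}{Q}$ as the functional to be \emph{minimized}. The resulting sharp lower bound $\|\varphi\|_{\BMO^p}\ge\ve_0(p)\,\xi^+(C)$ is then converted, via Theorem~\ref{main} together with the continuity and blow-up of $\ve\mapsto[e^{\ve\varphi}]_{A_\infty}$ from Lemma~\ref{cont}, into $\ve_0(p)\ge\omega(p)$ and $C(\ve,p)\le G^{-1}(\ve)$ on the stated range. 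Your treatment of $p=1$, which bounds $\av{w}{(0,1)}$ by $[w]_{A_\infty}\le k^{-1}(\|\log w\|_{\BMO^1})$, is in fact exactly this dual estimate; but you assume the ``sharp relation encoded by $k$'' rather than prove it (it is the content of the Bellman function $\bel{b}_{1,C}$), and you do not carry the same dual mechanism over to $1<p<2$, where it is the only working one.
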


\begin{remark}
From~\eqref{kc}, we have $k(C)=\frac{2(1-1/C)}{e^{\xi^+}-e^{\xi^-}}\ge \frac 2e\,\big(1-\frac1C\big),$ thus $k^{-1}(\ve)\le \frac{1}{1-\frac e2\,\ve},$ and so
$$
C(\ve,1)\le \frac{1}{1-\frac e2\,\ve},\quad\text{if}\quad 0\le\ve<\frac{2}e.
$$
\end{remark}

Note that~\eqref{maineps} gives $\ve_0(1)=\frac2e,$ $\ve_0(2)=1,$ and $C(\ve,2)=\frac{e^{-\ve}}{1-\ve}$ for $\ve\in[0,1).$ Thus, we recover all known cases of sharp results in~\eqref{jn2}, but, of course, this theorem contains more. We do not know if the upper inequality in~\eqref{ce1} actually holds as equality, at least for some $\ve>0.$ This has to do with the nature of optimizers in our inequalities, and ultimately with the geometry of the underlying extremal problem; see Remark~\ref{rem6} below and the discussion in the next section.

Here is our weak-type result for $\BMO^p.$ 
\begin{theorem}
\label{t2}
If $p\in(1,2],$ then $\ve_0(p)$ is attained as $c_0(p)$ in~\eqref{jn1}. Specifically, if $Q$ is an interval and $\varphi\in\BMO(Q),$ then for any subinterval $J$ of $Q$ and any $\lambda\ge0,$
\eq[ttt2]{
\frac1{|J|}\,\big|\{t\in J:~|\varphi(t)-\av{\varphi}J|\ge\lambda\}\big|\le (p-1)^{-\frac1{2-p}} e^{-\frac{\ve_0(p)\lambda }{\|\varphi\|_{\BMO^p}}}.
}
\end{theorem}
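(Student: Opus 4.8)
The plan is to reduce the weak-type bound to the sharp $L^p$ (or rather the weak-type) estimate for logarithms of $A_\infty$ weights that underlies Theorem~\ref{t1}, and then to track the constant carefully. Since $\ve_0(p)=\sup\{c_0(p)\}$ in~\eqref{epsc} but the supremum need not a priori be attained, the whole point is to exhibit a single working pair $(c_0(p),K(p))=(\ve_0(p),(p-1)^{-1/(2-p)})$. First I would fix $\varphi\in\BMO(Q)$ with $\|\varphi\|_{\BMO^p}=1$ (by homogeneity) and a subinterval $J\subset Q$; replacing $\varphi$ by $\varphi-\av{\varphi}J$ we may assume $\av{\varphi}J=0$. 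The standard device is to write, for $\ve<\ve_0(p)$,
\eq[wk1]{
\frac1{|J|}\big|\{t\in J:\ \varphi(t)\ge\lambda\}\big|\le e^{-\ve\lambda}\av{e^{\ve\varphi}}J\le e^{-\ve\lambda}C(\ve,p),
}
using~\eqref{jn2} together with $\av{\varphi}J=0$; applying the same to $-\varphi$ (whose $\BMO^p$ norm is also $1$) and adding, one gets the two-sided version with constant $2C(\ve,p)$. The heart of the matter is then to let $\ve\uparrow\ve_0(p)$ in~\eqref{wk1} \emph{without} the constant blowing up, which is exactly where the explicit formula~\eqref{cep} for $C(\ve,p)$ — valid on $[(2-p)\ve_0(p),\ve_0(p))$ — is indispensable.

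The obstruction, of course, is that $C(\ve,p)=\dfrac{e^{-\ve/\ve_0(p)}}{1-\ve/\ve_0(p)}\to\infty$ as $\ve\uparrow\ve_0(p)$, so the crude Chebyshev step~\eqref{wk1} at a \emph{fixed} $\ve$ is not enough. The remedy is to optimize over $\ve$ as a function of $\lambda$: from~\eqref{wk1},
\eq[wk2]{
\frac1{|J|}\big|\{t\in J:\ \varphi(t)\ge\lambda\}\big|\le\inf_{(2-p)\ve_0(p)\le\ve<\ve_0(p)}\ \frac{e^{-\ve\lambda-\ve/\ve_0(p)}}{1-\ve/\ve_0(p)}.
}
Writing $s=\ve/\ve_0(p)\in[2-p,1)$ and $\Lambda=\ve_0(p)\lambda$, the right side becomes $e^{-\Lambda s-s}/(1-s)$, and calculus in $s$ gives the minimizing $s=s(\Lambda)$; for $\Lambda$ large this is $s\approx 1-1/(\Lambda+1)$, which lies in the admissible window $[2-p,1)$ once $\lambda$ exceeds a threshold depending on $p$. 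Plugging back yields a bound of the form $\const(p)\,(\Lambda+1)\,e^{-\Lambda}$, which is $\le\const'(p)\,e^{-\Lambda}$ up to absorbing the linear factor — but to get the \emph{exact} constant $(p-1)^{-1/(2-p)}$ one cannot afford slack, so the delicate part is to choose the comparison point and handle the residual linear factor sharply. I expect the correct route is not the soft argument just sketched but a direct appeal to the Bellman function / extremal-weight machinery from Section~2: the claimed constant $(p-1)^{-1/(2-p)}$ surely arises as the value of the relevant Bellman function at the boundary point corresponding to the endpoint $\ve=(2-p)\ve_0(p)$, and the proof will identify $\lambda=0$ (where the left side is $1$ and we need $(p-1)^{-1/(2-p)}\ge1$, i.e.\ $p<2$, consistent with the strict inequality in the hypothesis) as the governing case, with the exponential decay for $\lambda>0$ coming for free from the same function.

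Concretely, the steps I would carry out are: (i) restate the sharp weak-type estimate for $\log w$, $w\in A_\infty^C(Q)$, from Section~2 — this is the true engine, with~\eqref{jn1} for $\BMO^p$ obtained by exponentiating $\varphi=\frac1\ve\log w$ and reading off $[w]_{A_\infty}$ in terms of $\|\varphi\|_{\BMO^p}$; (ii) verify that the admissible range for the $A_\infty$-characteristic translates precisely to the condition $1<p\le2$ and produces $c_0(p)=\ve_0(p)$ with the constant $(p-1)^{-1/(2-p)}$; (iii) check the endpoint behaviors, namely that as $p\to2^-$ one recovers $K(2)=e$ (indeed $(p-1)^{-1/(2-p)}\to e$) and as $p\to1^+$ the constant blows up, consistent with the fact that at $p=1$ the sharp $K(1)=\frac12 e^{4/e}$ is finite but is \emph{not} obtained by this limiting procedure — this is why the theorem excludes $p=1$. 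The single hardest step is (ii): matching the Bellman boundary value to the closed form $(p-1)^{-1/(2-p)}$ and confirming it is genuinely attained (so that the weak-type constant $c_0(p)$ equals $\ve_0(p)$ and not merely approaches it) requires the explicit extremizers, and it is there that the restriction to $p\le2$, via the validity window $[(2-p)\ve_0(p),\ve_0(p))$ in~\eqref{cep}, is used in an essential way.
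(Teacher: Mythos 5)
You correctly discard the Chebyshev-plus-\eqref{jn2} route --- optimizing $e^{-\ve\lambda}C(\ve,p)$ over $\ve$ yields $(\Lambda+1)e^{-\Lambda}$, and the linear factor cannot be absorbed at the critical rate, so this route cannot produce $c_0(p)=\ve_0(p)$ with \emph{any} finite $K(p)$ --- and you correctly identify the sharp weak-type estimate for logarithms of $A_\infty$ weights as the true engine. But the proposal stops exactly where the proof begins: it never supplies the mechanism converting the $A_\infty$ statement into \eqref{ttt2}, and your guess for how to do so (matching a ``Bellman boundary value'' via explicit extremizers) is not what is needed. The actual argument combines three specific ingredients. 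First, Lemma~\ref{cont} (continuity of $\ve\mapsto[e^{\ve\varphi}]_{A_\infty(Q)}$, with limit $\infty$) produces a scale $\ve^*$ at which $[e^{\ve^*\varphi}]_{A_\infty(Q)}=\frac{e^{p-2}}{p-1}$ exactly, i.e.\ $\xi^+=2-p$; this is precisely the threshold at which the computation of $\bel{b}_{p,C}$ becomes available. Second, inequality \eqref{est3} of Theorem~\ref{t3} applied to $\ve^*\varphi$ gives $2-p\le\ve^*\|\varphi\|_{\BMO^p}/\ve_0(p)$, and it is this \emph{lower bound on the norm} --- not any extremizer --- that upgrades the decay rate $e^{-\lambda/\xi^+}$ supplied by Theorem~\ref{t8} to $e^{-\ve_0(p)\lambda/(\ve^*\|\varphi\|_{\BMO^p})}$. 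Third, the prefactor in the simplified form \eqref{ones}--\eqref{simple} of Theorem~\ref{t8}, evaluated at $\xi^+=2-p$, equals $\tfrac12(1-\xi^+)^{-1/\xi^+}=\tfrac12(p-1)^{-1/(2-p)}$; rescaling $\lambda\mapsto\lambda/\ve^*$ and doubling for the two-sided statement then finishes the proof. Without the second ingredient there is no way to pass from the characteristic-dependent rate $1/\xi^+$ to the norm-dependent rate $\ve_0(p)/\|\varphi\|_{\BMO^p}$, and that passage is the entire content of the theorem.

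Your endpoint checks are fine ($(p-1)^{-1/(2-p)}\to e$ as $p\to2^-$; blow-up as $p\to1^+$), and your intuition that the constant lives at the endpoint $C=\frac{e^{p-2}}{p-1}$ of the validity window is correct. But note that no optimizers appear in this proof; they are used only to establish Theorem~\ref{all_bellman}, which is taken as given here, and the restriction $1<p\le2$ enters solely through the hypotheses of \eqref{est3}.
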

Here we recover one of the two known cases in~\eqref{jn1}, namely, $p=2.$ We even get the exact constant $K(2)=e$ from~\cite{vv} (in the limit, as $p\to2^-$), though our focus is squarely on $\ve_0(p)$ and we certainly do not claim that the constant before the exponent is sharp for $p<2.$ In particular, $(p-1)^{-\frac1{2-p}}$ blows up as $p\to 1^+,$ and this theorem does not capture the case $p=1$ that was addressed in~\cite{korenovsky} and~\cite{lerner}.

We have two corollaries. The first relates to the famous theorem by Garnett and Jones~\cite{gj} that says that the distance from 
$\varphi\in\BMO(\rn)$ to $L^\infty$ in $\BMO^1$ norm admits two-sided estimates in terms of what we call $\ve_\varphi$ and $\ve_{-\varphi}.$ In our notation, their theorem is as follows:
\begin{theorem}[\cite{gj}]
If $\varphi\in\BMO(\rn),$ then there are constants $C_1$ and $C_2$ depending only of dimension such that
$$
\frac{C_1}{\min\{\ve_\varphi,\ve_{-\varphi}\}}\,\le\inf_{f\in L^\infty(\rn)}\|\varphi-f\|_{\BMO^1}\le\, \frac{C_2}{\min\{\ve_\varphi,\ve_{-\varphi}\}}.
$$
\end{theorem}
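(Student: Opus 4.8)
The plan is to prove the two inequalities separately; both exploit the link between $\ve_\varphi$ and exponential integrability, but the substantive direction is the upper bound, which I will obtain from the factorization theory of Muckenhoupt weights. I use throughout that multiplying a weight in $A_\infty$ (or $A_2$) by a function bounded above and below by positive constants leaves it in the same class, changing the characteristic by only a controlled factor (immediate from \eqref{1.1} and \eqref{1.2}), and that all $\BMO^p$ (quasi-)norms are equivalent, so the presence of $\BMO^1$ in the statement only affects the dimensional constants $C_1,C_2$. Write $\dist(\varphi)=\inf_{f\in L^\infty(\rn)}\|\varphi-f\|_{\BMO^1}$.

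\emph{Lower bound.} Fix $f\in L^\infty(\rn)$ and set $\delta=\|\varphi-f\|_{\BMO^1}$. By the integral form \eqref{jn2} of the John--Nirenberg inequality on $\rn$, there is a dimensional $\ve_*>0$ such that $\|\psi\|_{\BMO^1(\rn)}<\ve_*$ forces $e^\psi\in A_\infty(\rn)$. For any $\ve$ with $\ve\delta<\ve_*$, apply this to $\psi=\ve(\varphi-f)$, which satisfies $\|\psi\|_{\BMO^1}=\ve\delta<\ve_*$: then $e^{\pm\ve(\varphi-f)}\in A_\infty$, and since $e^{\pm\ve f}$ are bounded, $e^{\ve\varphi}$ and $e^{-\ve\varphi}$ lie in $A_\infty$ as well. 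Hence $\ve_\varphi\ge\ve_*/\delta$ and $\ve_{-\varphi}\ge\ve_*/\delta$, i.e. $\delta\ge\ve_*\max\{\ve_\varphi^{-1},\ve_{-\varphi}^{-1}\}=\ve_*/\min\{\ve_\varphi,\ve_{-\varphi}\}$. Taking the infimum over $f$ gives the left inequality with $C_1=\ve_*$.

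\emph{Upper bound.} Put $\eta=\min\{\ve_\varphi,\ve_{-\varphi}\}$; we may assume $0<\eta<\infty$, the cases $\eta=0$ and $\eta=\infty$ being trivial. Fix $\ve\in(0,\eta)$ and set $w=e^{\ve\varphi}$. Since $\ve<\ve_\varphi$ we have $w\in A_\infty$, and since $\ve<\ve_{-\varphi}$ we have $w^{-1}=e^{\ve(-\varphi)}\in A_\infty$; by the criterion recalled in Section~\ref{main_results} that $w\in A_2$ iff $w,w^{-1}\in A_\infty$, we conclude $w\in A_2(\rn)$. Now apply the Jones factorization theorem to write $w=w_0w_1^{-1}$ with $w_0,w_1\in A_1(\rn)$, and the Coifman--Rochberg representation of $A_1$ weights, $w_i=b_i(M\mu_i)^{\gamma_i}$ with $b_i,b_i^{-1}\in L^\infty$, $0\le\gamma_i<1$, and $\mu_i$ a positive measure. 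By the classical bound $\|\log M\mu\|_{\BMO(\rn)}\le c_n$, which holds with a purely dimensional $c_n$ for every measure $\mu$, we get $\|\log w_i-\log b_i\|_{\BMO}=\gamma_i\|\log M\mu_i\|_{\BMO}\le c_n$ with $\log b_i\in L^\infty$, so $\dist(\log w_i)\le c_n$. Since $\log w=\log w_0-\log w_1$, the triangle inequality for $\dist$ yields
$$
\dist(\ve\varphi)=\dist(\log w)\le\dist(\log w_0)+\dist(\log w_1)\le 2c_n,
$$
hence $\dist(\varphi)=\ve^{-1}\dist(\ve\varphi)\le 2c_n/\ve$. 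Letting $\ve\uparrow\eta$ gives the right inequality with $C_2=2c_n$.

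\emph{The main obstacle.} The entire difficulty is the uniformity in the upper bound. From $w=e^{\ve\varphi}\in A_2$ one gets cheaply only the estimate $\|\log w\|_{\BMO}\lesssim\log[w]_{A_2}$, and $[w]_{A_2}$ generically blows up as $\ve\uparrow\eta$, so this is useless on its own. The gain that rescues the argument --- that after subtracting a suitable bounded function the remaining $\BMO$ norm is controlled by a dimensional constant --- is precisely what the Coifman--Rochberg representation supplies: the exponent $\gamma_i<1$ there is independent of the weight's characteristic, and the characteristic-dependent part is absorbed entirely into $b_i$. Any proof of the upper bound must extract this gain; the Jones-factorization route packages it most economically. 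Alternatively one could run a Calderón--Zygmund stopping-time decomposition at scale $\sim1/\eta$ and build $f$ by truncation there, using the exponential-integrability estimates for $\varphi$ and $-\varphi$ that follow from $\eta<\ve_\varphi,\ve_{-\varphi}$; this is nearer to Garnett and Jones's original reasoning but requires more bookkeeping.
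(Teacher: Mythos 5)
Your proof is correct. One preliminary remark: the paper does not prove this statement at all --- it is quoted verbatim from Garnett and Jones \cite{gj} as background --- so there is no ``paper's proof'' to match. The only piece of it the paper engages with is the lower bound, which it sharpens in Corollary~\ref{t2.5}: there the constant $C_1$ is identified as $\ve_0(p)$ (for an interval, with sharpness), and the mechanism is the quantitative Bellman estimate $\|\varphi\|_{\BMO^p}\ge \ve_0(p)\,\xi^+(C)$ of Theorem~\ref{t3} followed by $C\to\infty$, $\xi^+\to1$. Your lower bound is the soft version of the same idea: you only invoke the qualitative John--Nirenberg inequality~\eqref{jn2} to get \emph{some} $\ve_*>0$, plus the observation that multiplying a weight by $e^{\pm\ve f}$ with $f\in L^\infty$ changes $[\,\cdot\,]_{A_\infty}$ by at most $e^{2\ve\|f\|_\infty}$; that is exactly what the theorem as stated requires, and it is fine. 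For the upper bound --- the substantive direction, which the paper does not address --- your route through the Jones factorization $w=w_0w_1^{-1}$ and the Coifman--Rochberg representation $w_i=b_i(M\mu_i)^{\gamma_i}$ is a correct and standard modern proof; the crucial uniformity (that $\gamma_i<1$ and $\|\log M\mu_i\|_{\BMO}\le c_n$ do not depend on $[w]_{A_2}$, all characteristic-dependence being absorbed into the bounded factors $b_i$) is exactly the point, and you identify it explicitly. This differs from Garnett and Jones's original argument, which predates the factorization theorem and proceeds by a stopping-time construction of the bounded approximant, as you note. The only caveats worth recording are that both the Jones factorization and the converse half of Coifman--Rochberg are substantial imported theorems, and that your $C_1=\ve_*$ is not sharp, whereas the paper's Corollary~\ref{t2.5} shows the optimal $C_1(p)$ on an interval is $\ve_0(p)$.
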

Of course, one can replace $\BMO^1$ norm with $\BMO^p$ norm and ask for the sharp constants $C_1(p), C_2(p).$ In that direction we have an interesting result that gives the sharp value of $C_1(p)$ in the case of $\BMO((0,1)),$ and then the same bound for $\BMO(\mathbb{R}),$ which may or may not be sharp.

\begin{corollary}
\label{t2.5}
If $p\in[1,2],$ $Q$ is an interval, and $\varphi\in\BMO(Q),$ then
\eq[dist1]{
\inf_{f\in L^\infty(Q)}\|\varphi-f\|_{\BMO^p(Q)}\ge \frac{\ve_0(p)}{\min\{\ve_{\varphi},\ve_{-\varphi}\}}.
}
Consequently, if $\varphi\in\BMO(\mathbb{R}),$ then
\eq[dist2]{
\inf_{f\in L^\infty(\mathbb{R})}\|\varphi-f\|_{\BMO^p(\mathbb{R})}\ge \frac{\ve_0(p)}{\min\{\ve_{\varphi},\ve_{-\varphi}\}}.
}
Inequality~\eqref{dist1} is sharp.
\end{corollary}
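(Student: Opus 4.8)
The plan is to derive the lower bounds \eqref{dist1}--\eqref{dist2} by a soft argument from the definitions of $\ve_0(p)$ and $\ve_\varphi$, and then to prove that \eqref{dist1} is sharp by exhibiting an explicit extremizer.

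\emph{The lower bounds.} Since the $\BMO^p$ norm, the $A_\infty$ characteristic, and the $L^\infty$ norm are all affine-invariant, it suffices to prove \eqref{dist1} for $Q=(0,1)$. Fix $f\in L^\infty(Q)$, put $\psi=\varphi-f$ and $\delta=\|\psi\|_{\BMO^p(Q)}$; we may assume $\delta>0$, since $\delta=0$ forces $\varphi\in L^\infty$ and makes the right side of \eqref{dist1} equal $0$. Fix $s$ with $0<s<\ve_0(p)/\delta$. Then $\|s\psi\|_{\BMO^p(Q)}=s\delta<\ve_0(p)$, so \eqref{jn2} applied with $\BMO=\BMO((0,1))$ (whose sharp constant is, by definition, $\ve_0(p)$) gives $\av{e^{s\psi}}{J}\le C(s\delta,p)\,e^{\av{s\psi}{J}}$ for every subinterval $J$ of $Q$; equivalently $[e^{s\psi}]_{A_\infty(Q)}\le C(s\delta,p)<\infty$, so $e^{s\psi}\in A_\infty(Q)$ (that $e^{s\psi}$ and $s\psi$ are locally integrable is clear). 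Because $e^{sf}$ is bounded above and below by positive constants, and multiplying a weight $w$ by such a $g$ changes $[w]_{A_\infty}$ only by the factor $\|g\|_{L^\infty}\|g^{-1}\|_{L^\infty}$, we get $e^{s\varphi}=e^{sf}e^{s\psi}\in A_\infty(Q)$, i.e.\ $s\le\ve_\varphi$. Applying the same argument to $-\psi$, whose $\BMO^p$ norm is again $\delta$, gives $e^{-s\varphi}=e^{-sf}e^{s(-\psi)}\in A_\infty(Q)$, i.e.\ $s\le\ve_{-\varphi}$. Letting $s\uparrow\ve_0(p)/\delta$ yields $\min\{\ve_\varphi,\ve_{-\varphi}\}\ge\ve_0(p)/\delta$, and taking the infimum over $f$ gives \eqref{dist1}. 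For \eqref{dist2} one repeats this verbatim with $\mathbb{R}$ in place of $(0,1)$ throughout, using that the sharp constant $\ve_0^{\mathbb{R}}(p)$ in \eqref{jn2} for $\BMO(\mathbb{R})$ satisfies $\ve_0^{\mathbb{R}}(p)\ge\ve_0(p)$, so that $\|s\psi\|_{\BMO^p(\mathbb{R})}=s\delta<\ve_0(p)\le\ve_0^{\mathbb{R}}(p)$ still forces $e^{\pm s\psi}\in A_\infty(\mathbb{R})$ (note \eqref{dist2} does not follow from \eqref{dist1} by restriction, since $\ve_{\varphi|_Q}\ge\ve_\varphi$ goes the wrong way).

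\emph{Sharpness.} By affine invariance it suffices to exhibit an extremizer on $Q=(0,1)$, and the candidate is $\varphi_0(t)=\log(1/t)$. For $\ve\ge1$ one has $\av{e^{\ve\varphi_0}}{(0,b)}=\frac1b\int_0^b t^{-\ve}\,dt=\infty$, so $e^{\ve\varphi_0}\notin A_\infty(Q)$, while for $0<\ve<1$ the standard computation gives $[t^{-\ve}]_{A_\infty(Q)}=\frac{e^{-\ve}}{1-\ve}<\infty$; hence $\ve_{\varphi_0}=1$. On the other hand $e^{-\ve\varphi_0}=t^{\ve}$ is bounded and $[t^{\ve}]_{A_\infty(Q)}<\infty$ for every $\ve>0$, so $\ve_{-\varphi_0}=\infty$, and the right side of \eqref{dist1} for $\varphi_0$ equals $\ve_0(p)$. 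For the norm, dilation covariance $\log(1/(ct))=\log(1/c)+\log(1/t)$ shows that the $p$-th central moment of $\varphi_0$ over a subinterval $(a,b)$ depends only on $r=b/a$; denote it $V_p(r)$. A change of variable gives $\lim_{r\to\infty}V_p(r)=\int_0^\infty|u-1|^p e^{-u}\,du$, and splitting this integral at $u=1$ and integrating by parts identifies it with $\frac pe\big(\Gamma(p)-\int_0^1 t^{p-1}e^t\,dt\big)+1=\ve_0(p)^p$, the quantity in \eqref{maineps}. Thus $\|\varphi_0\|_{\BMO^p(Q)}\ge\ve_0(p)$; granting that $\sup_{r>1}V_p(r)=\lim_{r\to\infty}V_p(r)$, we get $\|\varphi_0\|_{\BMO^p(Q)}=\ve_0(p)$, and then
$$
\ve_0(p)\le\inf_{f\in L^\infty(Q)}\|\varphi_0-f\|_{\BMO^p(Q)}\le\|\varphi_0\|_{\BMO^p(Q)}=\ve_0(p),
$$
the first inequality being the already-proved \eqref{dist1}. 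Hence \eqref{dist1} holds with equality for $\varphi_0$.

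The lower-bound half is routine. The real content — and the step I expect to be the main obstacle — is the identity $\sup_{r>1}V_p(r)=\lim_{r\to\infty}V_p(r)$, equivalently $\|\log(1/t)\|_{\BMO^p((0,1))}^p=\int_0^\infty|u-1|^p e^{-u}\,du$: one must show that for the monotone function $\log(1/t)$ the extremal subintervals in the definition of the $\BMO^p$ norm are those abutting the singularity $t=0$. For $p=2$ this is an elementary monotonicity check on $V_2$; for general $p\in[1,2]$ it requires a careful analysis of $V_p'(r)$ (or some structural fact about oscillations of monotone functions). This same identity underlies the equality $\ve_0(p)=\inf\{\ve_\varphi:\|\varphi\|_{\BMO^p}=1\}$ mentioned after \eqref{ephi}, so it is plausible that it is established already in the proof of Theorem~\ref{t1} and can simply be quoted here.
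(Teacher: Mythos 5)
Your proof is correct, but the lower-bound half takes a genuinely different route from the paper. You derive \eqref{dist1} ``softly'': for $s\delta<\ve_0(p)$ the integral John--Nirenberg inequality \eqref{jn2} (i.e.\ the very definition of $\ve_0(p)$ as the best constant) forces $e^{\pm s(\varphi-f)}\in A_\infty(Q)$, and since bounded multipliers preserve $A_\infty$, you get $\min\{\ve_\varphi,\ve_{-\varphi}\}\ge\ve_0(p)/\|\varphi-f\|_{\BMO^p}$; this is essentially the ``obvious'' inequality $\ve_0(p)\le\inf\{\ve_\chi:\|\chi\|_{\BMO^p}=1\}$ noted after \eqref{ephi}, upgraded by the translation-by-$L^\infty$ observation. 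The paper instead proves \eqref{dist1} quantitatively via Theorem~\ref{t3} and Lemma~\ref{cont}: it bounds $\|\ve\varphi\|_{\BMO^p(Q)}\ge\xi^+(F(\ve))\,\ve_0(p)$ (resp.\ $\ge k(F(\ve))$ for $p=1$) and lets $\ve\to\ve_\varphi$, so that $\xi^+\to1$. Your argument is shorter and needs no Bellman input for the inequality itself, while the paper's yields the finer estimate at each finite characteristic. For \eqref{dist2} you rerun the argument on $\mathbb{R}$ using $\ve_0^{\mathbb{R}}(p)\ge\ve_0(p)$, which is fine; note, though, that your parenthetical claim that \eqref{dist2} cannot be obtained by restriction is not quite right --- the paper does exactly that, applying the per-interval bound $\|\varphi|_Q\|_{\BMO^p(Q)}\ge\ve_0(p)/\ve_{\varphi,Q}$ and the identity $\ve_\varphi=\inf_Q\ve_{\varphi,Q}$ (the infimum over $Q$, not a fixed $Q$, is what fixes the direction of the inequality). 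The sharpness part coincides with the paper's: the identity you flag as the main obstacle, $\|\log(1/t)\|_{\BMO^p((0,1))}=\omega(p)$ together with $\omega(p)=\ve_0(p)$, is precisely Lemma~\ref{phi0} combined with Theorem~\ref{t1}, so quoting it is legitimate and is what the paper does.
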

 \noindent We conjecture that replacing $\ve_0(p)$ with $\ve_0^\mathbb{R}(p)$ in~\eqref{dist2} makes that inequality sharp as well.
 
 This corollary allows us to estimate from below the $\BMO(\mathbb{R})\to L^\infty(\mathbb{R})$ norm of the Hilbert transform,
$$
Hf(x)=\frac1\pi\,{\rm p. v.}\int_\mathbb{R}\frac1{x-y}\,f(y)\,dy.
$$
The estimate is elementary, but it sheds light on what the actual value of that norm might be. The proof, which we give right away, uses the Helson--Szeg\"o theorem \cite{hs}, which says that $e^\varphi\in A_2(\mathbb{R})$ if and only if $\varphi=f_1+Hf_2,$ where $f_1,f_2\in L^\infty(\mathbb{R})$ with $\|f_2\|_{L^\infty}<\frac\pi2.$
\begin{corollary}
\label{2.8}
For $p\in[1,2],$
\eq[norm6]{
\|H\|_{L^\infty\to\BMO^p}\ge\frac2\pi\,\ve_0(p).
}
\end{corollary}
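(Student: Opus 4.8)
The plan is to test~\eqref{dist2} on the logarithmic power‑weight potential $\varphi=(1-\delta)\log|x|$ and to play the lower bound of Corollary~\ref{t2.5} against the upper bound on the distance of $\varphi$ to $L^\infty$ that the Helson--Szeg\"o theorem supplies; comparing the two and sending $\delta\to0$ produces~\eqref{norm6}.

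In detail, fix $p\in[1,2]$ and set $N:=\|H\|_{L^\infty\to\BMO^p}$, which is finite because $H\colon L^\infty(\mathbb{R})\to\BMO(\mathbb{R})$ is bounded and all $\BMO^q$ (quasi-)norms are equivalent. For $\delta\in(0,1)$ let $\varphi=\varphi_\delta:=(1-\delta)\log|x|\in\BMO(\mathbb{R})$. I would first record two elementary facts about the power weight $e^{t\varphi}=|x|^{t(1-\delta)}$: it belongs to $A_2(\mathbb{R})$ exactly when $|t(1-\delta)|<1$, and it is not even locally integrable once $t(1-\delta)\le-1$. Since in particular $e^{\varphi}\in A_2(\mathbb{R})$, the Helson--Szeg\"o theorem yields $\varphi=f_1+Hf_2$ with $f_1,f_2\in L^\infty(\mathbb{R})$ and $\|f_2\|_{L^\infty}<\tfrac\pi2$; using $f_1$ as a competitor in the infimum,
\[
\inf_{f\in L^\infty(\mathbb{R})}\|\varphi-f\|_{\BMO^p}\le\|\varphi-f_1\|_{\BMO^p}=\|Hf_2\|_{\BMO^p}\le N\|f_2\|_{L^\infty}<\tfrac\pi2\,N.
\]
On the other hand, the second fact gives $e^{-t\varphi}\notin A_\infty(\mathbb{R})$ for $t\ge\tfrac1{1-\delta}$, hence $\ve_{-\varphi}\le\tfrac1{1-\delta}$ and so $\min\{\ve_\varphi,\ve_{-\varphi}\}\le\tfrac1{1-\delta}$. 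Inserting this into~\eqref{dist2} of Corollary~\ref{t2.5},
\[
(1-\delta)\,\ve_0(p)\le\frac{\ve_0(p)}{\min\{\ve_\varphi,\ve_{-\varphi}\}}\le\inf_{f\in L^\infty(\mathbb{R})}\|\varphi-f\|_{\BMO^p}<\tfrac\pi2\,N,
\]
and letting $\delta\to0^+$ gives $\ve_0(p)\le\tfrac\pi2\,N$, i.e.~\eqref{norm6}.

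With Corollary~\ref{t2.5} and Helson--Szeg\"o in hand there is no hard step; the whole point is the choice of test function. Power weights are exactly the functions that make both estimates tight at once: as the exponent $1-\delta$ rises to $1$, the weight $e^\varphi$ stays just inside $A_2$---keeping the Helson--Szeg\"o bound on the distance at the scale $\tfrac\pi2 N$---while $\min\{\ve_\varphi,\ve_{-\varphi}\}=\ve_{-\varphi}$ simultaneously drops to $1$, which is precisely where the lower bound of Corollary~\ref{t2.5} reaches $\ve_0(p)$. One could instead carry out the computation with the single function $\tfrac2\pi\log|x|=H(\sgn)$, but that requires evaluating the Hilbert transform of $\sgn$ directly, whereas using Helson--Szeg\"o as a black box sidesteps that.
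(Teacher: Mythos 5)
Your proposal is correct and is essentially the paper's own argument: both proofs pit the lower bound~\eqref{dist2} against the upper bound on $\inf_f\|\varphi-f\|_{\BMO^p}$ coming from the Helson--Szeg\"o decomposition, with your $\varphi_\delta=(1-\delta)\log|x|$ playing exactly the role of the paper's $\ve\varphi$ (with $\varphi$ any function satisfying $\min\{\ve_\varphi,\ve_{-\varphi}\}=1$ and $\ve=1-\delta\to1^-$). The only cosmetic difference is that you instantiate the test function concretely as a power weight and verify $\min\{\ve_{\varphi_\delta},\ve_{-\varphi_\delta}\}\le\tfrac1{1-\delta}$ by hand, whereas the paper keeps $\varphi$ generic.
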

\begin{proof}
Take any $\varphi$ such that $\min\{\ve_{\varphi},\ve_{-\varphi}\}=1.$ Then for any $\ve\in(0,1),$ $e^{\ve\varphi}\in A_2.$ By the Helson--Szeg\"o theorem, 
$
\ve\varphi=f_1+Hf_2
$ with $f_1,f_2\in L^\infty$ and $\|f_2\|_\infty<\frac\pi2.$ Therefore, 
$$
\inf_{f\in L^\infty}\|\varphi-f\|_{\BMO^p}\le \frac1\ve\,\|Hf_2\|_{\BMO^p}\le \frac\pi{2\ve}\,\|H\|_{{L^\infty\to\BMO^p}}.
$$
By~\eqref{dist2} the left-hand side is no smaller than $\ve_0(p).$ Now, take the limit as $\ve\to1.$
\end{proof}
\noindent We again conjecture that~\eqref{norm6} holds with equality if $\ve_0(p)$ is replaced with $\ve_0^\mathbb{R}(p).$

\subsection{Main results for $A_\infty$}
We prove three theorems for $A_\infty.$ All three are interesting in their own right, but the the first two are also needed in the proof of Theorem~\ref{t2}. These results assume that $e^\varphi\in A_\infty(Q)$ for an interval $Q$ and give various estimates for $\varphi.$ Their short proofs, given in Section~\ref{nonB}, are based on the exact expressions of the corresponding Bellman functions. 

We first give lower estimates for $\|\varphi\|_{\BMO^p},$ $1\le p\le 2,$ and an upper estimate for $p=2.$ The lower inequalities for $p>1$ are required in the proof of Theorem~\ref{t3}.
\begin{theorem}
\label{t3}
Assume that $e^\varphi\in A_\infty(Q)$ and let $C=[e^\varphi]_{A_\infty(Q)}.$

For all $C\ge 1,$
\eq[est1]{
\ds \xi^+\le\|\varphi\|_{\BMO^2}\le|\xi^-|
}
and
\eq[est2]{
\|\varphi\|_{\BMO^1(Q)}\ge k(C).
}
If $p\in(1,2)$ and $C\ge\frac{e^{p-2}}{p-1},$ then
\eq[est3]{
\|\varphi\|_{\BMO^p(Q)}\ge \ve_0(p) \xi^+.
}
Inequalities~\eqref{est1} and~\eqref{est3} are sharp.
\end{theorem}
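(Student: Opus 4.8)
The plan is to obtain the four inequalities from the exact Bellman functions of the next section by a routine localization, and to get sharpness of~\eqref{est1} and~\eqref{est3} from explicit power-logarithmic extremizers. Since $[\,\cdot\,]_{A_\infty}$ and $\|\cdot\|_{\BMO^p}$ are invariant under affine images of the underlying interval and under adding a constant to $\varphi$, everything reduces to the model interval $(0,1)$ and to functions with zero average there. Write $R(\varphi,J)=\av{e^\varphi}J\,e^{-\av{\varphi}J}$ for the $A_\infty$-ratio, so $[e^\varphi]_{A_\infty(Q)}=\sup_{J\subset Q}R(\varphi,J)=C$. For the lower estimates I would first choose intervals $J\subset Q$ with $R(\varphi,J)$ as close to $C$ as desired; rescaling such a $J$ onto $(0,1)$ and subtracting $\av{\varphi}J$ produces $\psi$ with $\av{\psi}{(0,1)}=0$, $\av{e^\psi}{(0,1)}=R(\varphi,J)$, and $[e^\psi]_{A_\infty((0,1))}=[e^\varphi]_{A_\infty(J)}\le C$, whence $\|\varphi\|_{\BMO^p(Q)}^p\ge\av{|\psi-\av{\psi}{(0,1)}|^p}{(0,1)}\ge\mathbb{B}_p\big(R(\varphi,J);C\big)$, where $\mathbb{B}_p(\,\cdot\,;C)$ is the lower Bellman function of the next section (bounding $\av{|\psi-\av{\psi}{(0,1)}|^p}{(0,1)}$ from below for zero-average $\psi$ of prescribed $A_\infty$-ratio on $(0,1)$ and characteristic at most $C$). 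Letting $R(\varphi,J)\to C$ and using continuity of $\mathbb{B}_p$ gives $\|\varphi\|_{\BMO^p(Q)}^p\ge\mathbb{B}_p(C;C)$; substituting the closed forms yields $\mathbb{B}_2(C;C)=(\xi^+)^2$ for all $C\ge1$ (which is the left inequality in~\eqref{est1}), $\mathbb{B}_1(C;C)=k(C)$ for all $C\ge1$ (which is~\eqref{est2}), and $\mathbb{B}_p(C;C)=(\ve_0(p)\,\xi^+)^p$ exactly when $C\ge\frac{e^{p-2}}{p-1}$ (which is~\eqref{est3}). The last $C$-range is precisely the branch on which the Bellman function has this closed form, equivalently $\xi^+\ge 2-p$, and it coincides with the range in~\eqref{cep} of Theorem~\ref{t1} — reflecting that~\eqref{est3} and the upper bound on $C(\ve,p)$ record the same Bellman function.

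The upper bound in~\eqref{est1} is dual. For \emph{every} $J\subset Q$ the rescaled $\psi$ obeys $[e^\psi]_{A_\infty((0,1))}\le C$, so $\av{(\varphi-\av{\varphi}J)^2}J\le\overline{\mathbb{B}}_2(C)$ with $\overline{\mathbb{B}}_2$ the corresponding upper Bellman function; taking the supremum over $J$ gives $\|\varphi\|_{\BMO^2(Q)}^2\le\overline{\mathbb{B}}_2(C)=(\xi^-)^2$. For sharpness of~\eqref{est1} and~\eqref{est3} I would test on $Q=(0,1)$ the functions $\varphi=\xi^+\log\frac1t$ and $\varphi=|\xi^-|\log t$. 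For the first, $e^\varphi=t^{-\xi^+}$, a direct computation gives $[t^{-\xi^+}]_{A_\infty((0,1))}=\frac{e^{-\xi^+}}{1-\xi^+}=C$, and on each interval $(0,b)$ one finds $\av{|\varphi-\av{\varphi}{(0,b)}|^p}{(0,b)}=(\xi^+)^p\int_0^\infty|1-s|^p e^{-s}\,ds=(\xi^+\ve_0(p))^p$ (the integral being $\ve_0(p)^p$ by~\eqref{maineps}); since the $\BMO^p$-seminorm of $\log\frac1t$ over $(0,1)$ is attained on the intervals abutting $0$ and equals $\ve_0(p)$, this makes~\eqref{est3} and the left inequality in~\eqref{est1} sharp (recall $\ve_0(2)=1$). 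Symmetrically, $e^{|\xi^-|\log t}=t^{|\xi^-|}$ has $[t^{|\xi^-|}]_{A_\infty((0,1))}=\frac{e^{-\xi^-}}{1-\xi^-}=C$ and $\|\varphi\|_{\BMO^2((0,1))}=|\xi^-|\,\ve_0(2)=|\xi^-|$, so the right inequality in~\eqref{est1} is sharp as well.

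The localization is routine; the content is in two places. The first is the input from the next section: the exact closed forms of $\mathbb{B}_p(C;C)$ and $\overline{\mathbb{B}}_2(C)$, and in particular the identification of the threshold $C=\frac{e^{p-2}}{p-1}$ (equivalently $\xi^+=2-p$) past which $\mathbb{B}_p(C;C)=(\ve_0(p)\xi^+)^p$; below it the Bellman function follows a different branch and the bound in~\eqref{est3} would no longer be sharp. The second is the closing step of the sharpness argument — verifying that the $\BMO^p$-seminorm of $\log\frac1t$ on $(0,1)$ really equals $\ve_0(p)$, i.e. that subintervals not abutting the singularity contribute strictly less; this is a standard monotonicity fact for power-logarithmic BMO functions but must be checked in order to close the argument.
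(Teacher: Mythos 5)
Your proposal is correct and follows essentially the same route as the paper: the lower bounds come from evaluating the lower Bellman functions $\bel{b}_{p,C}(0,\cdot)$ along a sequence of intervals whose $A_\infty$-ratio tends to $C$, the upper bound in~\eqref{est1} from the upper Bellman function $\bel{B}_{2,C}(0,\cdot)$ together with its monotonicity in the second variable, and sharpness from the power-logarithmic extremizers $\xi^{\pm}\log(1/t)$, whose $\BMO^p$ norm and $A_\infty$ characteristic are exactly what the paper computes in Lemma~\ref{phi0}. The monotonicity fact you flag at the end (that the seminorm of $\log(1/t)$ is attained on intervals abutting the origin) is precisely the content of~\eqref{norm} in that lemma, so nothing is missing.
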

\begin{remark}
\label{rem6}
Inequality~\eqref{est2} may or may not be sharp. The corresponding inequality for oscillations, $\av{|\varphi-\av{\varphi}Q|}Q\ge k(C),$ is sharp, in that there exists a function $\varphi$ for which it becomes an inequality. However, that function's $\BMO^1$ norm is realized on an interval $J\subsetneq Q,$ and this norm is larger than $k(C).$ In contrast, for $p>1$ there is a different function $\varphi$ for which $\|\varphi\|_{\BMO^p(Q)}=\av{|\varphi-\av{\varphi}Q|^p}Q^{1/p}=\ve_0(p)\xi^+.$
\end{remark}

Note that in this theorem $C$ is precisely the $A_\infty$-characteristic of $e^\varphi.$ The next two results simply require that the characteristic be no larger than $C.$ If $Q$ is an interval and $C\ge1,$ we denote 
$$
\AC:=\{w\in A_\infty(Q):~ [w]_{A_\infty(Q)}\le C\}.
$$ 

The second theorem gives the sharp estimate for the distribution function of $\varphi$ when $e^\varphi\in \AC.$ We mention two previous results in this vein, both using Bellman functions: article \cite{vv} contains the sharp weak-type John--Nirenberg inequality for $\BMO^2;$ and article \cite{reznikov} gives the sharp inequalities for distribution functions of $A_{p_1,p_2}$ weights. Note, however, that our estimate is for logarithms of $A_\infty$ weights, not the weights themselves.

\begin{theorem}
\label{t8}
If $C\ge1$ and $e^\varphi\in \AC,$ then for any $\lambda\in\mathbb{R}$ and any subinterval $J$ of $Q,$
$$
\frac1{|J|}\,\big|\{t\in J:~\varphi(t)-\av{\varphi}J\ge\lambda\}\big|\le
\begin{cases}
1,& \lambda\le 0,
\bigskip

\\
\ds1-\frac{\lambda}{\xi^+-\xi^-},& 0\le \lambda \le -\xi^-,
\bigskip

\\
\ds\frac{\xi^+e^{-\xi^-/\xi^+}}{\xi^+-\xi^-}\,e^{-\lambda/\xi^+},& \lambda\ge -\xi^-.
\end{cases}
$$
This inequality is sharp for each value of of $\lambda.$
\end{theorem}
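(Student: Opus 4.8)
The natural approach is the \Bf\ method applied directly to the distribution function. Fix $\lambda$, an interval $J$, and $C\ge1$. Since neither the $A_\infty$-characteristic nor the quantity to be estimated changes when $\varphi$ is replaced by $\varphi+\mathrm{const}$ (equivalently $w=e^\varphi$ by $cw$), the problem depends only on $x_1=\av{\varphi}J$ and $x_2=\av{e^\varphi}J$, which lie in the domain $\Omega_C=\{(x_1,x_2):\ e^{x_1}\le x_2\le Ce^{x_1}\}$, and in fact only on $\lambda$ and on $u=x_2e^{-x_1}\in[1,C]$. Put
$$
\bel B_\lambda(x_1,x_2)=\sup\Big\{\tfrac1{|J|}\big|\{t\in J:\ \varphi(t)-\av{\varphi}J\ge\lambda\}\big|:\ \av{\varphi}J=x_1,\ \av{e^\varphi}J=x_2,\ e^\varphi\in\AC\Big\}.
$$
Splitting $J$ into two subintervals, the pair $(x_1,x_2)$ is the length-weighted convex combination of the corresponding pairs of the two pieces --- which again lie in $\Omega_C$ --- while the distribution function is additive; this yields the usual concavity property, that $\bel B_\lambda$ is concave on every segment contained in $\Omega_C$. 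On the lower boundary $x_2=e^{x_1}$ the weight is forced to be constant, so $\bel B_\lambda(x_1,e^{x_1})$ equals $1$ if $\lambda\le0$ and $0$ otherwise, and trivially $0\le\bel B_\lambda\le1$. As usual, any $\bel M$ on $\Omega_C$ that is concave along chords of $\Omega_C$, dominates $\bel B_\lambda$ on the lower boundary, and satisfies $\bel M\le1$ on the upper one, obeys $\bel M\ge\bel B_\lambda$ throughout; so producing a sharp closed form amounts to identifying the minimal such $\bel M$ and matching it with extremizers.

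The candidate is the one computed in the \Bf\ section, assembled from the two roots $\xi^\pm=\xi^\pm(C)$ of~\eqref{xi2}. Along the characteristics issuing from the upper boundary $x_2=Ce^{x_1}$ --- the segments on which the candidate is affine --- the extremal weight is a combination of a power piece $w(t)\propto t^{-\xi^+}$, which belongs to $A_\infty$ exactly because $\xi^+<1$ and has $[w]_{A_\infty}=\frac{e^{-\xi^+}}{1-\xi^+}=C$ precisely by~\eqref{xi2}, together with constant pieces sitting at the levels governed by $\xi^\pm$. Evaluating the candidate at the point attached to a given $\varphi$ with $e^\varphi\in\AC$, then reducing to $u=\av{e^\varphi}Je^{-\av{\varphi}J}$ and passing to the worst case $u=C$, produces exactly the three-branch formula in the statement; the break at $\lambda=-\xi^-$, where both relevant branches equal $\frac{\xi^+}{\xi^+-\xi^-}$, is where the governing characteristic changes type, and the break at $\lambda=0$ is the trivial threshold. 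The upper bound in the theorem then follows once one checks that the candidate has these boundary values and is concave along chords of $\Omega_C$; this is the standard supersolution test for the associated \ma equation and reduces to an ODE along each characteristic together with the correct sign of the transversal second derivative.

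Sharpness is obtained by following, for each $\lambda$, the extremal characteristic of $\bel B_\lambda$ from the upper boundary and reading off the corresponding $\varphi$: concretely, a (in general self-similar) weight on a nested family of subintervals of $Q$, built from constant pieces at the levels tied to $\xi^\pm$ and a rescaled copy of $\xi^+\log(1/\cdot)$, with the ratios chosen so that $\av{e^\varphi}Je^{-\av{\varphi}J}=C$ on an appropriate subinterval $J$ and the displayed inequality becomes an equality at the prescribed $\lambda$. The main obstacle, in my view, is twofold. First, one must correctly identify the candidate \Bf, in particular the foliation of $\Omega_C$ into characteristics across all three ranges of $\lambda$, and then discharge the local concavity (\ma) verification. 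Second, on the sharpness side, one must do the bookkeeping needed to confirm that the $A_\infty$-characteristic of the extremal weight stays $\le C$ on \emph{every} subinterval while equality is attained for the chosen $\lambda$. This last point is genuinely delicate: Remark~\ref{rem6} exhibits a closely related situation in which a naive self-similar candidate's relevant norm is secretly realized on a proper subinterval, spoiling sharpness, so here one must verify that the analogous degeneracy does not occur.
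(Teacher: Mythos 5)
Your architecture is the same as the paper's: define the Bellman function $\bel{D}_{\lambda,C}$ for the distribution set on $\Omega_C$, identify it with the explicit candidate $D_{\lambda,C}$ of~\eqref{wf} via local concavity (Bellman induction) plus optimizers assembled from constant pieces and a rescaled copy of $\xi^+\log(1/t)$, and then specialize to the slice $x_1=0$. This matches Theorem~\ref{all_bellman}(5) together with Lemmas~\ref{concon}, \ref{lemma_induction}, and~\ref{main_opt}, where the real work of the paper sits; you defer that work (the \ma/concavity verification and the check that the optimizer's $A_\infty$-characteristic stays $\le C$ on every subinterval), which is acceptable as an outline but means the heavy lifting is only gestured at.

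The concrete error is in the last reduction: ``passing to the worst case $u=C$'' is false in the middle regime $0<\lambda<-\xi^-$. What you need is $\sup_{y\in[1,C]}D(0,y)$, and $D(0,\cdot)$ is \emph{not} monotone on $[1,C]$: it increases while $(0,y)\in\Omega_1(\lambda)\cup\Omega_2(\lambda)$ and decreases while $(0,y)\in\Omega_3(\lambda)$. Hence for $\lambda+\xi^-\le 0\le\lambda$ the maximum is attained at the interior point $y=e^{\lambda}(1-C\lambda e^{\xi^-})$, where the vertical line $x_1=0$ meets the tangent line issued from $(\lambda,e^\lambda)$ associated with $\xi^-$; only there does one obtain the stated middle branch $1-\lambda/(\xi^+-\xi^-)$ (using $Ce^{\xi^-}=1/(1-\xi^-)$). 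The value $D(0,C)$, with $(0,C)\in\Omega_3(\lambda)$, is strictly smaller, so taking $u=C$ produces a ``bound'' that is violated by admissible $\varphi$ whose point $(0,y)$ sits at the junction — and, relatedly, the extremizer for this branch lives at $y<C$, not at $y=C$. Your claim is correct only in the outer regimes: trivially for $\lambda\le0$, and for $\lambda\ge-\xi^-$, where $(0,C)\in\Omega_1(\lambda)$ and the monotonicity does push the worst case to $y=C$, giving the third branch. To repair the proof, replace the blanket ``worst case $u=C$'' with the case analysis of where $(0,y)$ lands among the $\Omega_k(\lambda)$ and the resulting maximization of $D(0,\cdot)$ over $[1,C]$.
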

\begin{remark}
Observe that $\xi^+-\xi^--\lambda\le \xi^+e^{-(\xi^-+\lambda)/\xi^+}$ for $0\le\lambda\le -\xi^-.$ Thus, one has the following less precise, but more convenient estimate for all $\lambda\ge0:$
\eq[ones]{
\frac1{|J|}\,\big|\{t\in J:~\varphi(t)-\av{\varphi}J\ge\lambda\}\big|\le \frac{e^{-\xi^-/\xi^+}}{1-\xi^-/\xi^+}\,e^{-\frac\lambda{\xi^+}}.
}
\end{remark}

Finally, our third theorem for $A_\infty$ is a Gehring-type result on self-improvement of $A_\infty$ weights. It gives the sharp bound on $\delta>1$ such that if $e^\varphi\in A_\infty,$ then $e^{\delta\varphi}\in A_\infty.$ It is not needed in the proof of the main theorems for $\BMO^p,$ but it is a nice illustration of our method and involves hardly any additional effort, since we are already doing in-depth analysis on $A_\infty.$ While to our knowledge it is not formulated this way in literature, it is implicitly contained in~\cite{v1} and~\cite{v2}.
\begin{theorem}
\label{t7}
If $C\ge 1$ and $e^\varphi\in \AC,$ then for any $1\le \delta<1/\xi^+,$ $e^{\delta\varphi}\in A_\infty(Q)$ and
$$
[e^{\delta \varphi}]_{A_\infty(Q)}\le \frac{e^{-\delta\xi^+}}{1-\delta\xi^+}.
$$
This estimate is sharp.
\end{theorem}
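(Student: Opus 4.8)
The plan is to identify $[e^{\delta\varphi}]_{A_\infty(Q)}$ with a supremum of values of the Bellman function for the reverse-H\"older-type problem on $A_\infty^C$ — the same circle of functions already analyzed in Section~\ref{nonB} (and, in substance, in~\cite{v1,v2}) — and then to match the resulting bound with an explicit power-weight extremizer. First I would reduce to a single interval: for a subinterval $J\subseteq Q$ set $\psi_J=\varphi-\av{\varphi}J$, so that $\av{\psi_J}J=0$ and $\av{e^{\delta\varphi}}J\,e^{-\delta\av{\varphi}J}=\av{e^{\delta\psi_J}}J$; hence $[e^{\delta\varphi}]_{A_\infty(Q)}=\sup_{J\subseteq Q}\av{e^{\delta\psi_J}}J$. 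Since $[e^{\varphi}]_{A_\infty(J)}\le[e^{\varphi}]_{A_\infty(Q)}\le C$, the restriction of $e^{\varphi}$ to every $J$ lies in $\AC$ (read on the interval $J$). Introduce
$$
\mathbb{B}_C(x,y)=\sup\big\{\av{e^{\delta u}}Q:\ \av{u}Q=x,\ \av{e^{u}}Q=y,\ e^{u}\in\AC\big\},
$$
defined on $\Omega_C=\{(x,y):\,e^{x}\le y\le Ce^{x}\}$, the upper inequality being the $A_\infty$ constraint and the lower one Jensen's inequality. Adding a constant to $u$ gives the homogeneity $\mathbb{B}_C(x,y)=e^{\delta x}\mathbb{B}_C(0,ye^{-x})$, so for each $J$,
$$
\av{e^{\delta\psi_J}}J=\av{e^{\delta\varphi}}J\,e^{-\delta\av{\varphi}J}\le\mathbb{B}_C\big(\av{\varphi}J,\av{e^{\varphi}}J\big)e^{-\delta\av{\varphi}J}=\mathbb{B}_C\big(0,\av{e^{\varphi}}J\,e^{-\av{\varphi}J}\big),
$$
and the last argument lies in $[1,C]$. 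Thus $[e^{\delta\varphi}]_{A_\infty(Q)}\le\sup_{s\in[1,C]}\mathbb{B}_C(0,s)$, and $e^{\delta\varphi}$ is locally integrable as soon as this quantity is finite.

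The second step is to invoke the explicit form of $\mathbb{B}_C$. It obeys the same homogeneous \ma boundary-value problem (with the $A_\infty$ upper cap playing the role of obstacle) as the Bellman functions behind Theorems~\ref{t3} and~\ref{t8}: it is finite exactly when $\delta\xi^{+}<1$, and its extremals on the upper face $y=Ce^{x}$ are the (shifted) power weights $u=-\xi^{+}\log t+\const$ on $Q$. From the explicit formula one gets $\sup_{s\in[1,C]}\mathbb{B}_C(0,s)=\mathbb{B}_C(0,C)$, and evaluating the boundary datum there along $u=-\xi^{+}\log t-\xi^{+}$ (which satisfies $\av uQ=0$, $\av{e^u}Q=C$) gives $\mathbb{B}_C(0,C)=e^{-\delta\xi^{+}}\av{t^{-\delta\xi^{+}}}{(0,1)}=\frac{e^{-\delta\xi^{+}}}{1-\delta\xi^{+}}$. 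Combined with the previous paragraph this yields $[e^{\delta\varphi}]_{A_\infty(Q)}\le\frac{e^{-\delta\xi^{+}}}{1-\delta\xi^{+}}<\infty$, hence in particular $e^{\delta\varphi}\in A_\infty(Q)$.

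For sharpness, normalize $Q=(0,1)$ and take $w=t^{-\xi^{+}}$. A direct computation gives $\av{w}{(0,c)}e^{-\av{\log w}{(0,c)}}=\frac{e^{-\xi^{+}}}{1-\xi^{+}}=C$ for every $c\in(0,1)$ (the quantity being strictly smaller on intervals bounded away from $0$), so $[w]_{A_\infty(0,1)}=C$ and $w\in\AC$; the same computation applied to $w^{\delta}=t^{-\delta\xi^{+}}$, integrable precisely because $\delta<1/\xi^{+}$, gives $[w^{\delta}]_{A_\infty(0,1)}=\frac{e^{-\delta\xi^{+}}}{1-\delta\xi^{+}}$, so the estimate is attained. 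The only genuine difficulty lies in the second step — pinning down the closed form of $\mathbb{B}_C$ and verifying that its relevant extreme value equals $\frac{e^{-\delta\xi^{+}}}{1-\delta\xi^{+}}$, i.e., solving the associated \ma problem — but the requisite $A_\infty$ Bellman analysis is already in place from the proofs of Theorems~\ref{t3} and~\ref{t8}, so here it comes down to bookkeeping plus the power-weight example, which is why the result is implicit in~\cite{v1,v2}.
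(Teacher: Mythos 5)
Your proposal is correct and follows essentially the same route as the paper: the function you call $\mathbb{B}_C$ is exactly the Bellman function $\bel{A}_{\delta,C}$ of \eqref{A}, whose explicit form \eqref{Ad} is supplied by Theorem~\ref{all_bellman}(4), and the paper likewise reduces to the value $\bel{A}_{\delta,C}(0,C)=\frac{e^{-\delta\xi^+}}{1-\delta\xi^+}$ via monotonicity of $\bel{A}_{\delta,C}(0,\cdot)$ on $[1,C]$ and obtains sharpness from the power-weight optimizer $\varphi^+_{(0,C)}(t)=-\xi^+-\xi^+\log t$ of Section~\ref{optimizers}. The only difference is presentational: you spell out the translation reduction and the extremal computation that the paper leaves implicit.
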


\subsection{The $\BMO^p$-$A_\infty$ connection}

Let us describe the key idea behind Theorem~\ref{t1}. The known results for $p=1$ (\cite{korenovsky,lerner})  use techniques, such as equimeasurable rearrangements, that do not produce sharp constants for other $p.$ The results for $p=2$ 
(\cite{sv,vv}) rely on explicit Bellman functions for the corresponding inequalities. We also use this technique, but in a special, dual formulation. The following brief description of the Bellman approach to $\BMO^2$ will help explain the challenges and solutions for the problem at hand.

If one works on $\BMO^2,$ the expression for the norm in~\eqref{1} simplifies:
$$
\|\varphi\|_{\BMO^2(Q)}=\sup_{\text{cube}~J\subset Q}\big(\av{\varphi^2}J-\av{\varphi}J^2\big)^{1/2}.
$$ 
This allows for natural variational formulations. For example, to determine $\ve_0(2)$ and $C(\ve,2)$ in~\eqref{jn2}, the following extremal problem was stated and solved in~\cite{sv}:
\eq[0.1]{
\bel{B}_{\ve}(x_1,x_2)=\sup\{\av{e^\varphi}Q: ~\av{\varphi}Q=x_1,~\av{\varphi^2}Q=x_2,~\|\varphi\|_{\BMO^2(Q)}\le\ve\}.
}
From its definition, this function can be seen to be independent of $Q.$ It is then found as a locally concave solution of the homogeneous \ma PDE on the plane domain
$\{x\in\mathbb{R}^2: x_1^2\le x_2\le x_1^2+\ve^2\},$
which means that the domain is foliated by straight-line characteristics along which the function is affine.
With $\bel{B}_\ve$ in hand, one determines the critical value of $\ve$ for which this function seizes being finite; that is precisely $\ve_0(2).$ To prove sharpness, optimizing functions are then constructed along the \ma characteristics.

One can take this idea further and pose a general problem of two-sided integral estimates on $\BMO^2:$ find the upper Bellman function
\eq[0.2]{
\bel{B}_{f,\ve}(x_1,x_2)=\sup\{\av{f(\varphi)}Q:~\av{\varphi}Q=x_1,~\av{\varphi^2}Q=x_2,~\|\varphi\|_{\BMO^2(Q)}\le\ve\},
}
as well as its lower counterpart, where the supremum is replaced with infimum.
Article~\cite{sv1} lays the foundation of this general theory: it develops several canonical blocks out of which \ma foliations are assembled and applies them to the case $f(t)=|t|^p$ for $p>0.$ The case $f(t)=\chi_{(-\infty,-\lambda]\cup[\lambda,\infty)}(t)$ was considered in~\cite{vv}. In the large paper~\cite{iosvz2} (also see the report~\cite{iosvz1}), the theory of canonical foliations is completed, at least for sufficiently nice $f.$ 
The recent paper~\cite{sz} formalizes the following {\it a priori} connection between developable graphs on the domain $\{x_1^2\le x_2\le x_1^2+\ve^2\}$ and integral estimates for $\BMO^2,$ which has been shown true in all known applications:  the upper Bellman function~\eqref{0.2} is the smallest locally concave function $B$ such that $B(t,t^2)=f(t);$  the lower Bellman function is the largest locally convex function satisfying this condition. This theorem actually holds for general domains in $\mathbb{R}^2$ that are differences of two convex sets; this is illustrated in the next section.

However, when one considers $\BMO^p$ with $p\ne 2,$ which is what we need to answer our main questions, this approach no longer works. We can formally define the Bellman function for each $p$ by analogy with~\eqref{0.1}; here is one reasonable definition:
\eq[0.3]{
\bel{\mathcal{B}}_{p,\ve}(x_1,x_2)=\sup\{\av{e^\varphi}Q:~\av{\varphi}Q=x_1, \av{|\varphi-\av{\varphi}Q|^p}Q=x_2,~\|\varphi\|_{\scriptscriptstyle \BMO^p(Q)}\le\ve\}.
}
However, the dynamics of the problem is unclear: the variables do not split in a quantifiable fashion when the interval $Q$ is split (this is in contrast with the situation in~\eqref{0.1} where if $Q$ is a disjoint union of $Q^-$ and $Q^+,$ then the vector $x_Q$ corresponding to $Q$ is a convex combination of the vectors $x_{Q^-}$ and $x_{Q^+}$).
In particular, this function does not satisfy any apparent PDE. Simply put, one has no way of computing it directly.

We overcome this problem in an arguably intuitive way: rather than estimating $\ave{e^{\varphi-\ave{\varphi}}}$ through $\|\varphi\|_{\BMO^p}$ to determine the critical value of the norm, we study the dual problem of estimating, from below, $\BMO^p$ oscillations of logarithms of $A_\infty$ weights and computing their asymptotics as the $A_\infty$-characteristic goes to infinity. The following general result, which is valid in all dimensions and has Theorem~\ref{t1} as a partial corollary, makes this precise.

Fix a $p>0$ and $C\ge1.$ Let
\eq[domain]{
\Omega_C=\{x\in\mathbb{R}^2: e^{x_1}\le x_2\le C\,e^{x_1}\}.
}
For a cube $Q$ and every $x=(x_1,x_2)\in\Omega_C,$ let 
\eq[adm]{
E_{x,C,Q}=\{\varphi\in L^1(Q):~\av{\varphi}Q=x_1,~\av{e^\varphi}Q=x_2,~[e^\varphi]_{A_\infty(Q)}\le C\}
}
When $Q$ and $C$ are fixed, we call the elements of $E_{x,C,Q}$ test functions corresponding to $x.$ It is easy to show that $E_{x,C,Q}$ is non-empty for every $x\in\Omega_C;$ in Section~\ref{optimizers} this is done for the case when $Q$ is an interval. Consider the following lower Bellman function:
\eq[bh]{
\bel{b}_{p,C}(x)=\inf\{\av{|\varphi|^p}Q:~\varphi\in E_{x,C,Q}\}.
}

\begin{theorem}
\label{main}
Assume that there exists a family of functions $\{b_C\}_{C\ge1}$ such that for each~$C$ $b_C$ is defined on $\Omega_C,$ $b_C\le \bel{b}_{p,C},$ and $b_C(0,\cdot)$ is continuous on the interval $[1,C].$ Then
\eq[mt1]{
\ve^p_0(p)\ge \limsup_{C\to\infty} b_C(0,C).
}
Furthermore, let $G(C)=\big(b_C(0,C)\big)^{1/p}.$ If there exists $C_0\ge1$ such that $G$ is strictly increasing on the interval $[C_0,\infty),$ then the function $C(\ve,p)$ from~\eqref{jn2} satisfies
\eq[mt2]{
C(\ve,p)\le G^{-1}(\ve),\quad  \text{for}\quad G(C_0)\le \ve<\ve_0(p).
}
\end{theorem}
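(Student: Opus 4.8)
The first move is to restate \eqref{jn2} in the language of $A_\infty$. For a fixed cube $J\subset Q$, the inequality $\av{e^\varphi}{J}\le C\,e^{\av{\varphi}{J}}$ is exactly $\av{e^\varphi}{J}e^{-\av{\varphi}{J}}\le C$, so \eqref{jn2} holding for every subcube $J$ and every $\varphi$ with $\|\varphi\|_{\BMO^p(Q)}\le\ve$ amounts to $[e^\varphi]_{A_\infty(Q)}\le C$ for all such $\varphi$. Consequently the best constant in \eqref{jn2} is
$$
C(\ve,p)=\sup\{[e^\varphi]_{A_\infty(Q)}:\|\varphi\|_{\BMO^p(Q)}\le\ve\},
$$
and $\ve_0(p)=\sup\{\ve>0:C(\ve,p)<\infty\}$. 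Everything therefore reduces to bounding $\|\varphi\|_{\BMO^p(Q)}$ from below in terms of the $A_\infty$-characteristic of $e^\varphi$, which is precisely the information carried by the minorants $b_C$ of the dual Bellman functions $\bel{b}_{p,C}$.

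The core of the argument is the following lemma: \emph{if $e^\varphi\in A_\infty(Q)$ and $D=[e^\varphi]_{A_\infty(Q)}$, then $\|\varphi\|_{\BMO^p(Q)}^p\ge b_D(0,D)$.} To prove it, fix $\eta>0$ and select a subcube $J\subset Q$ with $c_J:=\av{e^\varphi}{J}e^{-\av{\varphi}{J}}>D-\eta$; such a $J$ exists because $D$ is the supremum of this quantity over subcubes, and Jensen's inequality gives $1\le c_J\le D$. Put $\psi=\varphi|_J-\av{\varphi}{J}$, so that $\av{\psi}{J}=0$, $\av{e^\psi}{J}=c_J$, and $[e^\psi]_{A_\infty(J)}=[e^\varphi]_{A_\infty(J)}\le D$; hence $(0,c_J)\in\Omega_D$ and $\psi\in E_{(0,c_J),D,J}$. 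By the definition of $\bel{b}_{p,D}$ and the assumption $b_D\le\bel{b}_{p,D}$,
$$
\|\varphi\|_{\BMO^p(Q)}^p \ge \av{|\varphi-\av{\varphi}{J}|^p}{J} = \av{|\psi|^p}{J} \ge \bel{b}_{p,D}(0,c_J) \ge b_D(0,c_J).
$$
Letting $\eta\to0$, so that $c_J\to D^-$, and invoking the continuity of $b_D(0,\cdot)$ on $[1,D]$ completes the proof of the lemma.

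To finish I would promote this to all intermediate values of the characteristic by scaling. For $s\in[0,1]$ one has $\|s\varphi\|_{\BMO^p(Q)}=s\,\|\varphi\|_{\BMO^p(Q)}$, while $D_s:=[e^{s\varphi}]_{A_\infty(Q)}$ is nondecreasing in $s$ with $D_0=1$ and $D_1=D$. The map $s\mapsto D_s$ is moreover continuous: for each subcube $J$ the function $s\mapsto\log\!\bigl(\av{e^{s\varphi}}{J}e^{-s\av{\varphi}{J}}\bigr)$ is convex, vanishes together with its $s$-derivative at $s=0$, and is at most $\log D$ at $s=1$, so its derivative is bounded on $[0,1-\eta]$ by $\log D/\eta$ uniformly in $J$; this makes $D_\cdot$ locally Lipschitz on $[0,1)$, and continuity at $s=1$ follows from monotonicity together with the lower semicontinuity of $D_\cdot$. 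Hence $D_\cdot$ takes every value $C\in[1,D]$, and applying the lemma to $s\varphi$ for a suitable $s$ gives $\|\varphi\|_{\BMO^p(Q)}^p\ge b_C(0,C)$ for every $C\in\bigl[1,[e^\varphi]_{A_\infty(Q)}\bigr]$. I expect this continuity of the $A_\infty$-characteristic under the scaling $\varphi\mapsto s\varphi$ to be the point needing the most care; an equivalent route is to produce a subcube $J\subset Q$ with $[e^\varphi]_{A_\infty(J)}=C$ for a prescribed $C<D$, which rests on the (true but slightly technical) fact that the $A_\infty$-characteristic of a weight can be made arbitrarily close to $1$ on suitable small subcubes.

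Both assertions now follow. For \eqref{mt1}, suppose $\ve_0^p(p)<\limsup_{C\to\infty}b_C(0,C)$ and pick $\ve>\ve_0(p)$ with $\ve^p<\limsup_{C\to\infty}b_C(0,C)$; then $C(\ve,p)=\infty$, so for every $M\ge1$ there is $\varphi$ with $\|\varphi\|_{\BMO^p(Q)}\le\ve$ and $[e^\varphi]_{A_\infty(Q)}>M$, and the inequality of the previous paragraph forces $b_C(0,C)\le\ve^p$ for all $C\in[1,M]$. Letting $M\to\infty$ gives $\limsup_{C\to\infty}b_C(0,C)\le\ve^p$, a contradiction. For \eqref{mt2}, fix $\ve$ with $G(C_0)\le\ve<\ve_0(p)$ and let $\varphi$ satisfy $\|\varphi\|_{\BMO^p(Q)}\le\ve$; then $D:=[e^\varphi]_{A_\infty(Q)}$ is finite, and the lemma gives $\ve^p\ge b_D(0,D)$. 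If $D\ge C_0$ this reads $\ve\ge G(D)$, hence $D\le G^{-1}(\ve)$ by the strict monotonicity of $G$ on $[C_0,\infty)$; if $D<C_0$ then $D<C_0\le G^{-1}(G(C_0))\le G^{-1}(\ve)$. Taking the supremum over all such $\varphi$ yields $C(\ve,p)\le G^{-1}(\ve)$.
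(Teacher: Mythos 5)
Your proposal is correct and follows essentially the same route as the paper: bound $\av{|\varphi-\av{\varphi}{J}|^p}{J}$ from below by $b_D\big(0,\av{e^{\varphi-\av{\varphi}{J}}}{J}\big)$, push the second coordinate up to $D$ along a sequence of subcubes using the continuity of $b_D(0,\cdot)$, and then sweep through all intermediate characteristics $C\in[1,D]$ via the continuity of $s\mapsto[e^{s\varphi}]_{A_\infty(Q)}$ (this is exactly the paper's Lemma~\ref{cont}). The one loose end is in your proof of \eqref{mt1}: for $\ve>\ve_0(p)$ the witnessing $\varphi$ may have $[e^\varphi]_{A_\infty(Q)}=\infty$, in which case your continuity argument for $s\mapsto D_s$ (which uses the bound $\log D_1\le\log D<\infty$) does not apply on all of $[0,1]$; one must restrict to $s\in[0,s^*)$ with $s^*=\sup\{s: D_s<\infty\}$ and verify, via Fatou's lemma and Gehring's self-improvement as in the paper's Lemma~\ref{cont}, that $D_s\to\infty$ as $s\to s^*$, so the characteristic still sweeps all of $[1,\infty)$.
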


The thrust of this theorem is that to get good estimates for $\ve_0(p)$ and $C(\ve,p)$ one needs good lower estimates for the function $\bel{b}_{p,C}.$ We actually manage to find that function itself, which is the reason we obtain the exact $\ve_0(p).$ However, our findings are limited to the range $p\in[1,2]$ and dimension~1. As explained in Section~\ref{bellman}  below, the limitation on $p$ is of technical nature and can be overcome with additional effort. However, the restriction to the one-dimensional case can currently be avoided only at the cost of introducing exponential dependence on dimension, which would defeat the purpose of this convoluted setup. Let us point out that if one finds a non-trivial dimension-free lower estimate for $\bel{b}_{p,C}$ for any $p>0$ ($p=1$ is, perhaps, the easiest case) one will have proven a dimension-free John--Nirenberg inequality, a lofty goal.

Additionally, one can consider a function $h:[0,\infty)\to [0,\infty)$ and prove analogs of this theorem for ``$\BMO^h$'' whereby the $p$-based BMO norm is replaced with $\sup_J\av{h(|\varphi-\av{\varphi}J|)}J$. Here $h$ can be pretty wild -- it was recently shown in~\cite{weakBMO} that a single assumption, $\lim_{t\to\infty}h(t)=\infty,$ is enough to conclude that $\BMO^h\subset \BMO$ as sets. On the other hand, if $h$ is sufficiently regular, one can realize $\BMO^h$ as a Banach space by means of Orlicz norms, see~\cite{stromberg}.

Like Theorem~\ref{t1}, Theorem~\ref{t2} also uses $A_\infty$ estimates, but in a more direct manner. The rough idea is this: if $p>1$ and $e^\varphi\in A_\infty,$ then Theorem~\ref{t8} gives an upper estimate on the distribution function of $\varphi-\av{\varphi}J$ as a multiple of $e^{-\lambda/\xi^+},$ while Theorem~\ref{t3} gives the lower estimate $\|\varphi\|_{\BMO^p}\ge\ve_0(p)\xi^+$ for large enough $C.$ Comparing inequalities~\eqref{est1} and~\eqref{est2} makes clear why this does not work for $p=1:$ since $k(C)<2\xi^+/e$ for all $C,$ we cannot conclude that
$\|\varphi\|_{\BMO^1}\ge \ve_0(1)\,\xi^+$ holds for all $\varphi$ with $[e^\varphi]_{A_\infty}=C.$

Finding best constants for various BMO inequalities is a popular pursuit, motivated by the natural desire for sharpness and the central role BMO plays in modern analysis. In particular, such constants may have implications for $L^\infty\to\BMO$ norms of singular integrals (Corollary~\ref{2.8} is a simple illustration), and by interpolation, for their norms on $L^p;$ this is especially important in higher dimensions. Besides those already mentioned, we note papers~\cite{css}, \cite{knese}, \cite{ose1}, \cite{ose2}, \cite{ose3}, and \cite{sv2}. The last four use Bellman functions or their variants; the last two deal with the dyadic version of BMO.

When seeking sharp constants, one often wants to know whether they are attained and what the optimizing functions or sequences are. A key feature of the Bellman-function approach (at least of the precise version we use) is that the constants and the optimizers are arrived at together: an explicit formula for a Bellman function represents a continuum of sharp inequalities parametrized by points of its domain, while the function's differential structure -- in our application, the straight-line trajectories generated by the kernel of its Hessian -- gives a complete picture of optimizers through a now-standard procedure.

The rest of the paper is organized as follows: in Section~\ref{bellman}, we define all Bellman functions we use, state the main theorem that gives explicit formulas for these functions, and briefly discuss the method; in Section~\ref{nonB}, we assume the main Bellman theorem and prove all results stated above; Section~\ref{induction} contains the proof of one half of the main theorem, the so-called direct inequalities; finally, in Section~\ref{optimizers}, for each Bellman function we give explicit optimizers,
thus finishing the proof of the main theorem.

\section{The main Bellman theorem and discussion}
\label{bellman}

Fix $C\ge1$ and an interval $Q$ and recall the definitions \eqref{domain} of $\Omega_C$ and~\eqref{adm} of $E_{x,C,Q}.$ For $R> 0,$ let
$$
\Gamma_R=\{x\in\mathbb{R}^2:~x_2=Re^{x_1}\};
$$
then $\Omega_C$ is the region in the plane bounded below by $\Gamma_1$ and above by $\Gamma_C.$

We will need four Bellman functions on $\Omega_C,$ one lower and three upper.
For $p\in[1,2],$ $\delta\ge1,$ and $\lambda\in\mathbb{R},$ let
\eq[belp]{
\bel{b}_{p,C}(x)=\inf \{\av{|\varphi|^p}Q\!:~\varphi\in E_{x,C,Q}\},
}
\eq[Bel2]{
\bel{B}_{2,C}(x)=\sup \{\av{\varphi^2}Q\!:~\varphi\in E_{x,C,Q}\},
}
\eq[A]{
\bel{A}_{\delta,C}(x)=\sup \{\av{e^{\delta\varphi}}Q\!:~\varphi\in E_{x,C,Q}\},
}
\eq[D]{
\bel{D}_{\lambda,C}(x)=\sup \Big\{\frac1{|Q|}|\{t\in Q:~\varphi(t)\ge\lambda\}|:~\varphi\in E_{x,C,Q}\Big\}.
}
For each of these functions we now specify the Bellman candidate, i.e., an explicit function on $\Omega_C$ that will then be shown to equal the Bellman function. % We will denote each candidate by the same letter and with the same indices as the corresponding Bellman function, but without the bold. 

First, recall the functions~$\xi^\pm(C)$ from~\eqref{xi2}.
Geometrically, $\xi^\pm$ are the horizontal coordinates of the two points of tangency when two tangents to $\Gamma_C$ are drawn from the point $(0,1).$ In addition, let us define two new functions on $\Omega_C,$ $u^+$ and $u^-,$ by the implicit formula
\eq[upm]{
x_2=e^{u^\pm}\Big(\frac{x_1-u^\pm}{1-\xi^{\scriptscriptstyle \pm}}+1\Big).
}
To illustrate their geometric meaning, take a point $x\in\Omega_C$ and draw two one-sided tangents to $\Gamma_C,$ so that each tangent starts at $\Gamma_1,$ passes through $x,$ and terminates at the point of tangency. One of these tangents, call it $\ell^+(x),$ will have its point of tangency to the right of $x;$ the other, call it $\ell^-(x),$ has the point of tangency to the left of $x.$ In the first case, the horizontal coordinate of the initial point is $u^+(x)$ and that of the point of tangency is $u^+(x)+\xi^+;$ in the second case, these are $u^-(x)$ and $u^-(x)+\xi^-,$ respectively; see Figure~\ref{fig_u}. %Lastly, it is instructive to note  that $u^\pm$ can be written in the following form, though we will not use that here:
%$$
%u^\pm(x)=x_1+\xi^\pm\Big(\frac{Ce^{x_1}}{x_2}\Big)-\xi^\pm(C)
%$$
\newpage
\begin{figure}[!h!]
\includegraphics[width=10.5cm]{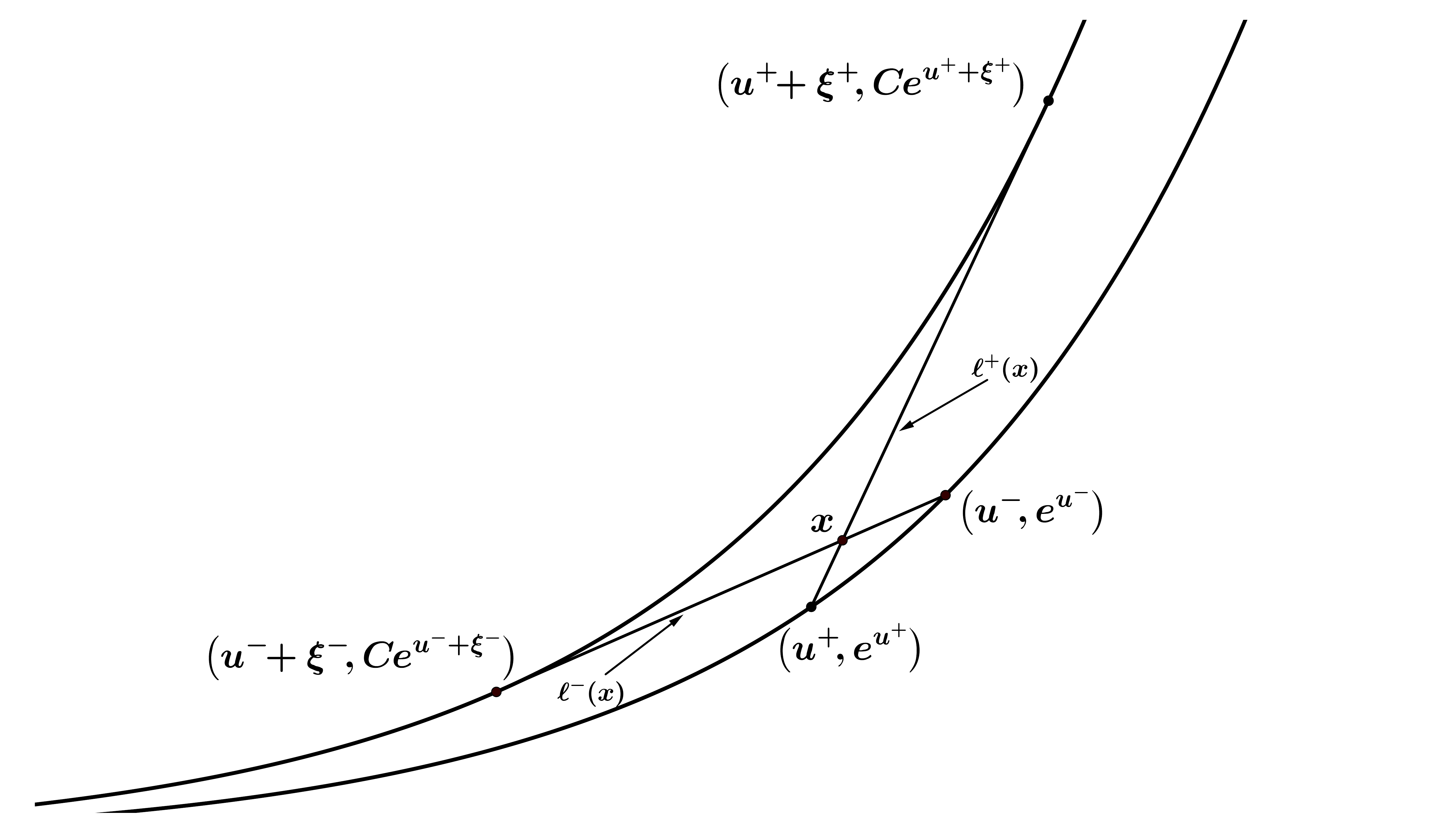}
\caption{The geometric meaning of $\xi^\pm,$ $u^\pm(x),$ and $\ell^\pm(x)$}
\label{fig_u}
\end{figure}

If $p\in(1,2],$ the candidate for $\bel{b}_{p,C}$ is given by 
\eq[bp]{
b_{p,C}(x)=\frac p{\xi^{\scriptscriptstyle+}}\,e^{u^{\scriptscriptstyle +}/\xi^{\scriptscriptstyle +}}
\Big[\int_{u^{\scriptscriptstyle +}}^\infty |s|^{p-1}\sgn(s)\,e^{-s/\xi^{\scriptscriptstyle +}}ds\Big](x_1-u^+)+|u^+|^p.
}
For future reference, let us specify this function for $p=2:$
\eq[b2]{
b_{2,C}(x)=2u^+x_1-(u^+)^2+2\xi^+(x_1-u^+).
}

The upper counterpart of the lower-Bellman candidate $b_{2,C}$ is
\eq[B2]{
B_{2,C}(x)=2u^-x_1-(u^-)^2+2\xi^-(x_1-u^-).
}

To define the candidate for $\bel{b}_{1,C},$ we need to split $\Omega_C$ into three subdomains separated by the tangents $\ell^\pm{(0,1)}$ and pictured in Figure~\ref{f2}. These subdomains may share boundaries, but are otherwise disjoint.  Thus, $\Omega_C=\Omega_-\cup\Omega_0\cup\Omega_+,$ where
\eq[b1domain]{
\begin{aligned}
&\Omega_-\!\!=\!\big\{\!\!-\infty< x_1\le \xi^-\!,~e^{x_1}\le x_2 \le Ce^{x_1}\big\}
\cup\big \{\xi^-\le x_1\le0,~e^{x_1}\le x_2 \le Ce^{\xi^-}\!\!x_1\!+\!1\big\}\\
&\Omega_0\!=\!\big\{\xi^-\!\le x_1\le0,~Ce^{\xi^-}\!\!x_1\!+\!1\le  x_2 \le Ce^{x_1} \big\}
\cup \big\{0\le x_1\le \xi^+\!,~Ce^{\xi^+}\!\!x_1\!+\!1\le x_2\le Ce^{x_1}\!\big\}\\
&\Omega_+\!\!=\!\big\{ 0\le x_1\le \xi^+\!,~e^{x_1}\le x_2 \le Ce^{\xi^+}\!\!x_1+1\big\}
\cup \big\{\xi^+\le x_1<\infty,~e^{x_1}\le x_2 \le Ce^{x_1}\big\}
\end{aligned}
}

\begin{figure}[h]
%\hspace{2.5cm}
\includegraphics[width=10.5cm]{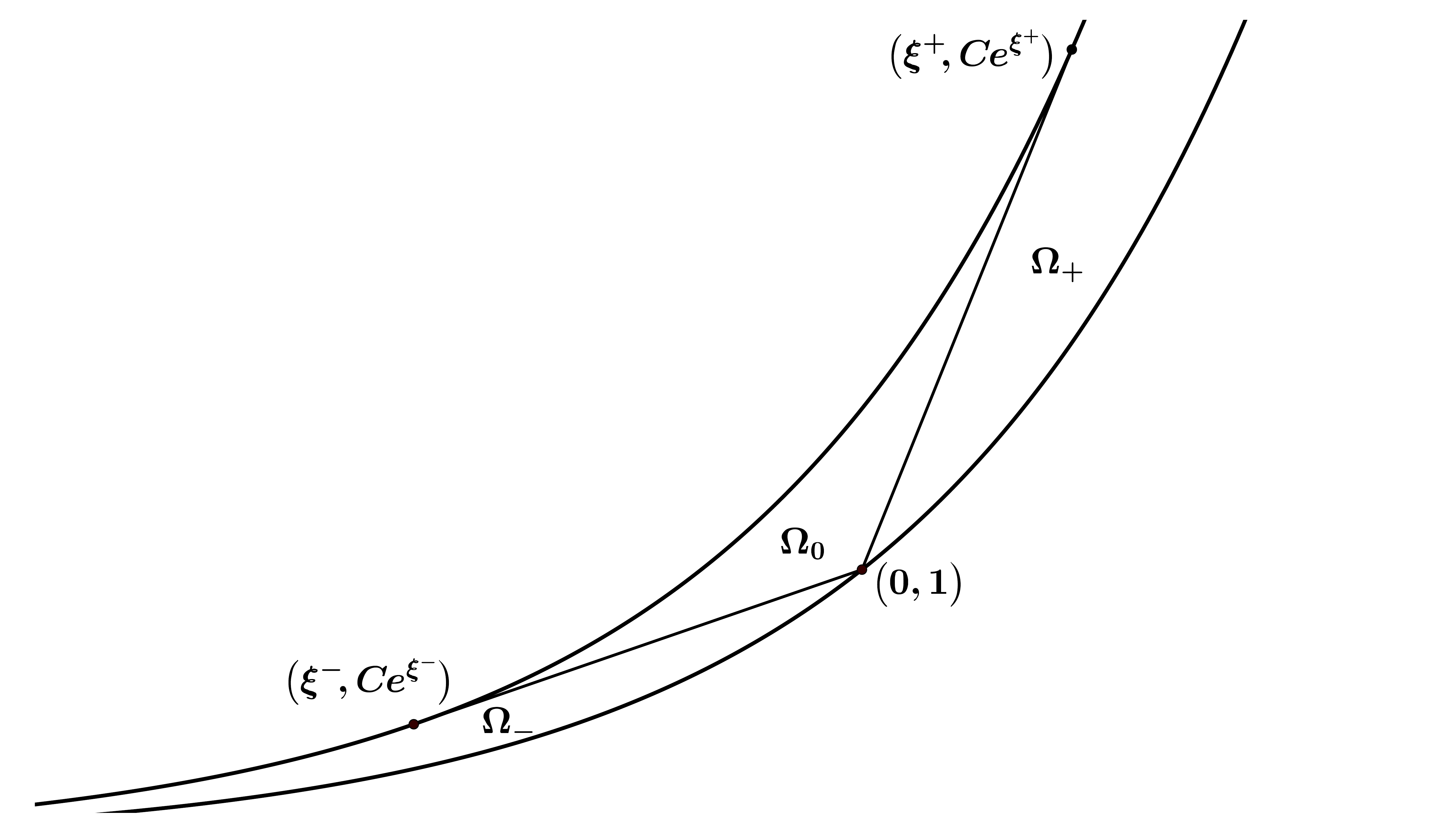}
\caption{The splitting of $\Omega_C$ for $\bel{b}_{1,C}$}
\label{f2}
\end{figure}
Then the candidate is given by
\eq[b1]{
b_{1,C}(x)=
\begin{cases}
-x_1,&x\in \Omega_-
\bigskip

\\
\ds2\,\frac{(1-\xi^-)(1-\xi^+)}{\xi^+-\xi^-}\,(x_2-1)+\frac{\xi^++\xi^--2}{\xi^+-\xi^-}\,x_1,&x\in \Omega_0
\bigskip

\\
~x_1,&x\in \Omega_+.
\end{cases}
}

The candidate for the function $\bel{A}_{\delta,C}$ is
\eq[Ad]{
A_{\delta,C}(x)=e^{\delta u^{\scriptscriptstyle+}}\left(\frac{\delta}{1-\delta \xi^+}\,(x_1-u^+)+1\right).
}

Finally, to give the candidate for the weak-type Bellman function~\eqref{D}, we need to split $\Omega_C$ into four sub-domains, $\Omega_C=\cup_{k=1}^4\Omega_k(\lambda).$ The splitting is illustrated in Figure~\ref{f3}.

\eq[wfd]{
\begin{aligned}
\Omega_1(\lambda)&=\big\{-\infty< x_1\le\lambda+\xi^--\xi^+,~e^{x_1}\le  x_2\le Ce^{x_1}\big\}\\
&\hspace{.5cm}\cup\big\{\lambda+\xi^--\xi^+\le x_1\le\lambda +\xi^-,~e^\lambda(Ce^{\xi^-}\!(x_1\!-\lambda)+1)\le x_2\le Ce^{x_1}\big\}
\\
\Omega_2(\lambda)&=\big\{\lambda+\xi^-\!\!-\xi^+\le x_1< \lambda,~e^{x_1}\le x_2\le e^\lambda(Ce^{\xi^-}\!(x_1-\lambda)+1)\big\}\\
\Omega_3(\lambda)&=\big\{\lambda+\xi^-\!\le x_1\le\lambda,~e^\lambda(Ce^{\xi^-}\!(x_1\!-\lambda)+1)\le x_2\le Ce^{x_1}\big\}\\
&\hspace{.5cm}\cup\big\{\lambda\le x_1\le \lambda+\xi^+,~e^\lambda(Ce^{\xi^+}\!(x_1\!-\lambda)+1)\le x_2\le Ce^{x_1}\big\}\\
\Omega_4(\lambda)&=\big\{\lambda\le x_1\le \lambda+\xi^+,~ e^{x_1}\le x_2\le e^\lambda(Ce^{\xi^+}\!(x_1\!-\lambda)+1)\big\}\\
&\hspace{.5cm}\cup \{\lambda+\xi^+\le x_1<\infty,~e^{x_1}\le x_2 \le Ce^{x_1}\}
\end{aligned}
}
\begin{figure}[h]
%\hspace{2.5cm}
\includegraphics[width=14cm]{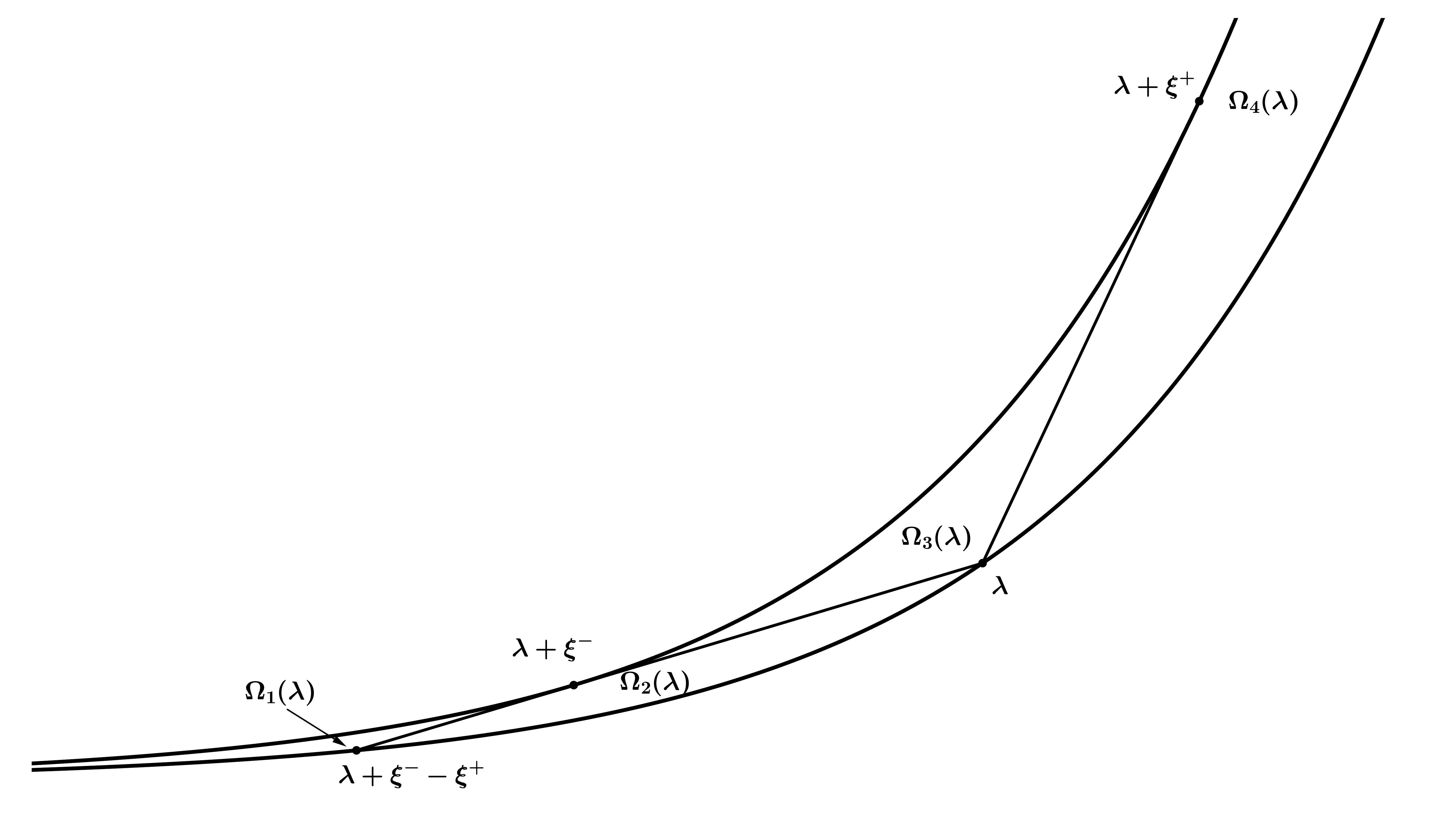}
\caption{The splitting of $\Omega_C$ for $\bel{D}_{\lambda,C}$}
\label{f3}
\end{figure}

\noindent We will also need the following auxiliary function in $\Omega_2(\lambda):$ Let $v=v(x)$ be the horizontal coordinate of the second point where the line through $(\lambda,e^{\lambda})$ and $x$ intersects $\Gamma_1.$ Thus,
\eq[v]{
\frac{e^v-e^\lambda}{v-\lambda}=\frac{x_2-e^\lambda}{x_1-\lambda}.
}
We are now in a position to define the Bellman candidate for $\bel{D}_{\lambda,C}.$ Let
\eq[wf]{
D_{\lambda,C}(x)=
\begin{cases}
\ds 1,& x\in\Omega_4(\lambda)
\bigskip

\\
\ds \frac{1-\xi^-}{(\xi^+-\xi^-)^2}\Big[x_1-\lambda+(1-x_2e^{-\lambda})(1-\xi^+)\Big]+1,&x\in\Omega_3(\lambda)
\bigskip

\\
\ds \frac{x_1-v}{\lambda-v},& x\in\Omega_2(\lambda)
\bigskip

\\
\ds \frac{e^{\frac{\xi^+-\xi^-+u^+-\lambda}{\xi^+}}}{\xi^+-\xi^-}\,(x_1-u^+),& x\in\Omega_1(\lambda).
\end{cases}
}
Note that $D_{\lambda,C}$ is continuous on $\Omega_C,$ except at the point $(\lambda,e^\lambda).$ By definition~\eqref{wfd}, that point belongs to both $\Omega_3(\lambda)$ and $\Omega_4(\lambda),$ but not to $\Omega_2(\lambda).$
  
Here is our main theorem of this section, which serves as the quantitative basis for all theorems stated in Section~\ref{main_results}.
\begin{theorem}
\label{all_bellman}
Let $C\ge 1.$
\ben[leftmargin=*]
\item[\bf (1)]
If $p\in(1,2]$ and $C\ge \frac{e^{p-2}}{p-1},$ then 
$
\bel{b}_{p,C}=b_{p,C}.
$
In particular, for any $C\ge1,$
$
\bel{b}_{2,C}=b_{2,C}.
$

\item[\bf (2)]
$
\bel{b}_{1,C}=b_{1,C}
$
\item[\bf (3)]
$
\bel{B}_{2,C}=B_{2,C}
$
\item[\bf (4)]
If $1\le \delta<1/\xi^+,$ then
$
\bel{A}_{\delta,C}=A_{\delta,C}.
$
If $\delta\ge 1/\xi^+,$ then
$$
\bel{A}_{\delta,C}(x)=
\begin{cases}
x_1,& \text{if}\quad x_2=e^{x_1};\\
\infty, & \text{if}\quad x_2>e^{x_1}.
\end{cases}
$$
\item[\bf (5)]
For any $\lambda\in\mathbb{R},$
$
\bel{D}_{\lambda,C}=D_{\lambda,C}.
$
\een

\end{theorem}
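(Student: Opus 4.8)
The plan is to establish each of the five identities by the standard two-sided argument that is canonical in the Bellman-function method: first show that the explicit candidate is a valid upper (resp.\ lower) bound for the corresponding Bellman function, then construct optimizers that saturate it. For the upper functions $\bel B_{2,C}$, $\bel A_{\delta,C}$, $\bel D_{\lambda,C}$ one needs the candidate to dominate the Bellman function, so one proves the candidate is locally concave on $\Omega_C$ and has the correct boundary behaviour on $\Gamma_1$ and $\Gamma_C$; for the lower function $\bel b_{p,C}$ one proves local convexity. The main-inequality half (``the direct inequalities'' of Section~\ref{induction}) is the dynamic-programming step: given $\varphi\in E_{x,C,Q}$, split $Q=Q^-\cup Q^+$, observe that the point $x_Q=(\av\varphi Q,\av{e^\varphi}Q)$ is a convex combination of $x_{Q^-}$ and $x_{Q^+}$, and that the $A_\infty$ constraint is inherited on each half (with the same $C$) — so that, for an upper Bellman function, $\av{f(\varphi)}Q$ is the corresponding convex combination of the values on the halves; iterating the Jensen-type inequality coming from concavity of the candidate along chords and passing to the limit over a sequence of partitions gives $\bel B\le B_{\text{cand}}$, and symmetrically $\bel b\ge b_{\text{cand}}$. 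The reverse inequalities are deferred to Section~\ref{optimizers}, where for each candidate one reads off the foliation by the straight-line chords $\ell^\pm(x)$ (the kernel of the Hessian) and builds a test function $\varphi$ that is constant along the subinterval decomposition dictated by that chord and whose endpoints sit on $\Gamma_1$; by construction such $\varphi$ has $[e^\varphi]_{A_\infty}=C$ and realizes the candidate value, giving $\bel B= B_{\text{cand}}$, etc.

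Concretely, I would proceed in the following order. (i) Verify the elementary geometry: that $\xi^\pm$ are exactly the tangency abscissae of the two tangents to $\Gamma_C$ from $(0,1)$, and that the implicit equations~\eqref{upm} for $u^\pm$ have a unique solution at each $x\in\Omega_C$, so the foliations $\ell^\pm$ are well defined; this also pins down the boundary values, e.g.\ $u^+(x)=x_1$ when $x\in\Gamma_1$. (ii) Check that each candidate is affine along its defining family of chords — for $b_{p,C}$, $b_{2,C}$, $A_{\delta,C}$, $D_{\lambda,C}$ (in $\Omega_1$) along the $\ell^+$-chords, for $B_{2,C}$ along the $\ell^-$-chords, for $b_{1,C}$ piecewise along $\ell^\pm(0,1)$ — and that it has the prescribed value on the lower boundary $\Gamma_1$ (on $\Gamma_1$ one must have $x_2=e^{x_1}$, i.e.\ $\varphi\equiv x_1$ is forced, so $\av{|\varphi|^p}Q=|x_1|^p$, $\av{\varphi^2}Q=x_1^2$, $\av{e^{\delta\varphi}}Q=e^{\delta x_1}$, and the distribution value is $\chi_{\{x_1\ge\lambda\}}$). (iii) Prove the concavity/convexity across chords: since each candidate is affine along one family of lines, checking the sign of the second derivative in the transverse direction reduces to a one-variable computation; here the hypothesis $C\ge \frac{e^{p-2}}{p-1}$ in part~(1) should enter precisely as the condition guaranteeing that the relevant derivative keeps the right sign on all of $\Omega_C$ (for smaller $C$ the chord structure degenerates and $\xi^+$ is too small). (iv) Run the dynamic-programming inequality of Section~\ref{induction} to get the correct one-sided bound, and (v) invoke the optimizer construction of Section~\ref{optimizers} for the matching bound. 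For part~(4) with $\delta\ge 1/\xi^+$ the candidate~\eqref{Ad} is singular, and one separately exhibits test functions driving $\av{e^{\delta\varphi}}Q\to\infty$ for any $x$ off $\Gamma_1$, which is short once the $\ell^+$-foliation is understood.

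I expect the transversal concavity/convexity verification in step~(iii) — together with tracking exactly which inequality on $C$ makes it work — to be the main obstacle, since the candidates are given implicitly through $u^\pm$ and the computations, while ``straightforward,'' are delicate near the boundary curves and at the interfaces between the sub-domains $\Omega_\pm,\Omega_0$ (for $b_{1,C}$) and $\Omega_1,\dots,\Omega_4$ (for $D_{\lambda,C}$), where one must check $C^1$-matching so that the glued function is genuinely (locally) concave/convex and not merely so on each piece. A secondary subtlety is the non-smoothness of $D_{\lambda,C}$ at $(\lambda,e^\lambda)$ and the bookkeeping of which sub-domain that point belongs to; and for $\bel b_{1,C}$ one must be careful that, unlike the $p>1$ case, the optimizer's $\BMO^1$ norm is realized on a proper subinterval (cf.\ Remark~\ref{rem6}), so the identity $\bel b_{1,C}=b_{1,C}$ is about $\av{|\varphi|}Q$ on the fixed $Q$, not about the norm — this must be stated and used carefully when this theorem is fed into Theorem~\ref{t3}. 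Everything else — the boundary evaluations, the affinity along chords, and the limiting argument in the dynamic-programming step — follows the now-standard template and should go through without surprises.
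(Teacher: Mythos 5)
Your overall two-step architecture (local convexity/concavity plus Bellman induction for one inequality, explicit optimizers along the \ma characteristics for the other) is exactly the paper's, and your steps (i)--(iii) and (v) match Sections~\ref{bellman}--\ref{optimizers} closely. However, there is a genuine gap in your step (iv), the dynamic-programming inequality. You write that one splits $Q=Q^-\cup Q^+$, notes that $x_Q$ is a convex combination of $x_{Q^-}$ and $x_{Q^+}$, and applies the Jensen-type inequality coming from local concavity/convexity of the candidate. But $\Omega_C$ is \emph{not} convex, and local concavity only yields the chord inequality for segments lying entirely inside $\Omega_C$. For an arbitrary splitting (say, into two halves) the segment $[x_{Q^-},x_{Q^+}]$ can leave $\Omega_C$, and then the induction step simply does not go through. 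The paper closes this hole with Vasyunin's splitting lemma (Lemma~\ref{split}): for any $C_1>C$ one can choose the splitting point so that the whole segment lies in the slightly larger domain $\Omega_{C_1}$, with splitting ratio uniformly separated from $0$ and $1$; one then runs the induction for the candidate on $\Omega_{C_1}$ and passes to the limit $C_1\to C$ at the very end, using continuity of the candidate in $C_1$. This lemma is not a routine detail --- the paper stresses that it is specific to dimension one and is the reason the whole argument does not extend to $\rn$ --- so its absence is a real missing idea rather than an omitted computation.

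Two smaller points. First, the induction as you describe it is carried out for bounded $\varphi$; to cover all of $E_{x,C,Q}$ the paper truncates $\varphi$ at heights $c<d$ and uses Lemma~\ref{cutoff} (truncating the logarithm of an $A_\infty$ weight does not increase its characteristic) together with monotone convergence. Second, in your step (v) you assert that the constructed optimizers satisfy $[e^{\varphi_x}]_{A_\infty}=C$ ``by construction''; in the paper this is the delicate part of Section~\ref{optimizers} and requires, for every subinterval $J$, a geometric argument that $(\av{\varphi_x}J,\av{e^{\varphi_x}}J)$ stays in $\Omega_C$ (again via Lemma~\ref{cutoff} and convexity considerations along the tangent lines). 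Neither of these is fatal to your plan, but both need to be supplied.
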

\subsection{Discussion}
We will not go into how our Bellman candidates were obtained, except to note that their constructions are close both in spirit and in technical details to the work on $\BMO^2$ presented in~\cite{sv1},~\cite{vv}, and~\cite{iosvz2}, and that the reader is encouraged to consult those papers. However, we would like to formulate a concise takeaway about the geometric nature of these candidates and outline the proof of Theorem~\ref{all_bellman} in the process. 

First, observe that if a function $\varphi$ on $Q$ is such that $\av{e^\varphi}Q=e^{\sav{\varphi}Q},$ then $\varphi$ is {\it a.e.} constant on $Q.$ Therefore the set $E_{(t,e^t),C,Q}$ contains only the constant function $\varphi(t)=t,$ and our Bellman functions automatically satisfy the following boundary conditions on $\Gamma_1:$
\eq[bcond]{
\bel{b}_{p,C}(t,e^t)=|t|^p,\quad\bel{B}_{2,C}(t,e^t)=t^2,\quad\bel{A}_{\delta,C}(t,e^t)=e^{\delta t},\quad \bel{D}_{\lambda,C}(t,e^t)=\chi_{_{[\lambda,\infty)}}(t).
}
As must be the case, and as can be easily checked, our candidates also satisfy these conditions. This fact is important in the proof of the theorem, which consists of two standard steps. 

Let us take $\bel{b}_{p,C}$ as an example. The first step, carried out in Section~\ref{induction}, is to show that
\eq[bb]{
b_{p,C}(x)\le \bel{b}_{p,C}(x),\quad x\in\Omega_C.
}
To that end, we first show that $b_{p,C}$ is locally convex in $\Omega_C$ (i.e., convex on every convex subset of $\Omega_C$). This local convexity, coupled with a geometric lemma due to Vasyunin, is then used in a special process called Bellman induction, eventually yielding a two-dimensional Jensen's inequality for the non-convex domain $\Omega_C:$
$$
b_{p,C}(\av{\varphi}Q,\av{e^\varphi}Q)\le \av{b_{p,C}(\varphi,e^\varphi)}Q=\av{|\varphi|^p}Q,
$$
where we used that $b_{p,C}(t,e^t)=|t|^p.$ Taking the infimum on the right over all $\varphi$ with fixed $\av{\varphi}Q$ and $\av{e^\varphi}Q$ gives~\eqref{bb}. This argument shows that $\bel{b}_{p,C}$ is bounded from below by any locally convex function with the right boundary conditions. 

The second step, done in Section~5, is to present for each $x\in\Omega_C$ an optimizer, i.e., a function $\varphi\in E_{x,C,Q}$ such that $\av{|\varphi|^p}Q=b_{p,C}(x).$ This automatically gives $\bel{b}_{p,C}(x)\le b_{p,C}(x),$ completing the proof.  Therefore, $b_{p,C}$ is the largest locally convex function on $\Omega_C$ satisfying the first condition in~\eqref{bcond}. Similarly, $B_{2,C},$ $A_{\delta,C},$ and $D_{\lambda,C}$ are the smallest locally concave functions on $\Omega_C$ satisfying their respective boundary conditions. This provides another illustration of the general theorem from~\cite{sz} mentioned in the previous section.

Consistent with our candidates' extremal nature, their graphs are so-called ruled surfaces, meaning that through each point on the graph passes a straight line contained in the graph. The traces of these lines in $\Omega_C$ are themselves straight lines, along which the candidate is affine; we refer to these traces as the \ma characteristics of the candidate. The reason for the name is that at points where the candidate -- call it $G$ -- is twice-differentiable, it satisfies the \ma equation $G_{x_1x_1}G_{x_2x_2}=G_{x_1x_2}^2,$ and the kernel of its Hessian generates the characteristics. They foliate the domain, and there is only one characteristic passing through each point, unless the candidate is affine in a neighborhood of that point; thus, the characteristics uniquely determine the candidate. Furthermore, the optimizers of Section~5 are built along these characteristics, using a procedure adapted from~\cite{sv1}. 

Let us briefly discuss the limitations of Theorem~\ref{all_bellman}. First, we restrict part (1) to $p\in[1,2]$ and $C\ge\frac{e^{p-2}}{p-1}$ simply because we do not yet know the \ma foliations, and thus the candidates, for other $p$ and other $C;$ they are certainly more complicated than the ones we have here. Once those are obtained, they will give lower estimates for $\ve_0(p)$ according to Theorem~\ref{main}, but it is not clear if they will give the exact value of $\ve_0(p)$ or allow us to prove an analog of Theorem~\ref{t2}; that will depend on the nature of their optimizers. Second, Vasyunin's lemma used to show~\eqref{bb} is specific to dimension~1, and its analogs in higher dimensions may well be false. The only currently available way to use Bellman induction on non-convex domains in higher dimensions is to replace the underlying function class with its dyadic, or similarly rigid, variant, which would give an exponential dependence on dimension (cf.~\cite{sv2}, where the John--Nirenberg constant for the dyadic $\BMO^2(\rn)$ is computed).

Lastly, the reader familiar with the results of~\cite{sv} may recognize $b_{2,C}$ as a special inverse of the upper Bellman function~$\bel{B}_\ve$ defined by~\eqref{0.1}, for which that paper provides an explicit formula. Without going into details, we note that
$$
\bel{B}_{\xi^+}(z_1,z_2)=e^{z_1}\big(b_{2,C}(0,\cdot)\big)^{-1}(z_2-z_1^2).
$$
A similar relation holds between $B_{2,C}$ and the lower counterpart of the function~$\bel{B}_\ve.$ More importantly, one can make similar statements for the Bellman functions $\bel{\mathcal{B}}_{\ve,p}$ defined by~\eqref{0.3}, thus representing those functions as ``shifted inverses'' of the functions $b_{p,C}$ that are the subject of the present work. This is a fairly wonderful fact, as $\bel{\mathcal{B}}_{\ve,p}$ do not admit direct computation.

\section{All non-Bellman proofs}
\label{nonB}
Here we first state and prove two auxiliary results and then, assuming Theorem~\ref{all_bellman}, prove all theorems and corollaries from Section~\ref{main_results} (except Corollary~\ref{2.8}) in this order: Theorem~\ref{main}, Theorem~\ref{t3}, Theorem~\ref{t8}, Theorem~\ref{t7}, Theorem~\ref{t1}, Theorem~\ref{t2}, and Corollary~\ref{t2.5}.

The following intuitive lemma is valid in all dimensions. Here $\BMO$ and $A_\infty$ are defined either on $\rn$ or on a cube $Q\subset\rn;$ in the latter case, all cubes are subcubes of $Q.$
 
\begin{lemma}
 \label{cont}
 Let $\varphi$ be a non-constant $\BMO$ function. For $\ve\in[0,\ve_\varphi),$ let $F(\ve)=[e^{\ve\varphi}]_{A_\infty}.$ Then $F$ is a strictly increasing, continuous function on $[0,\ve_\varphi),$ and $\lim_{\ve\to\ve_\varphi}F(\ve)=\infty.$
 \end{lemma}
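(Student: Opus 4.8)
The plan is to establish the three claimed properties of $F(\ve)=[e^{\ve\varphi}]_{A_\infty}$ in turn, exploiting the logarithmic-convexity structure hidden in the $A_\infty$ characteristic. First I would rewrite, for a fixed interval $J$, the quantity $\av{e^{\ve\varphi}}J e^{-\ve\av{\varphi}J}$ and observe that as a function of $\ve$ its logarithm is $\log\av{e^{\ve\varphi}}J-\ve\av{\varphi}J$. The key elementary fact is that $\ve\mapsto\log\av{e^{\ve\varphi}}J$ is convex (this is the standard cumulant/H\"older convexity: $\av{e^{\ve\varphi}}J$ is a log-convex function of $\ve$, being an $L^{1/\text{weight}}$-type object), so $g_J(\ve):=\log\av{e^{\ve\varphi}}J-\ve\av{\varphi}J$ is convex in $\ve$ with $g_J(0)=0$, and $g_J'(0)=\av{\varphi}J-\av{\varphi}J=0$, so $g_J$ has a minimum at $\ve=0$ and is nondecreasing on $[0,\ve_\varphi)$; moreover it is strictly increasing as soon as $\varphi$ is non-constant on $J$ (strict convexity off the degenerate case). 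Taking $e^{(\cdot)}$ and then the supremum over all $J$, one gets that $F(\ve)=\sup_J e^{g_J(\ve)}$ is a supremum of a family of nondecreasing functions, hence nondecreasing; and because $\varphi$ is non-constant as a $\BMO$ function, there is at least one $J$ on which it is non-constant, which forces strict monotonicity of $F$.

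Next, for continuity I would argue that $F$ is lower semicontinuous as a supremum of the continuous functions $\ve\mapsto e^{g_J(\ve)}$, and upper semicontinuous because each $g_J$ is convex, hence the family $\{g_J\}$ is ``uniformly'' controlled near any interior point $\ve_1\in(0,\ve_\varphi)$: fixing $\ve_1<\ve_2<\ve_\varphi$, convexity of $g_J$ on $[0,\ve_2]$ gives $g_J(\ve)\le \frac{\ve_2-\ve}{\ve_2}g_J(0)+\frac{\ve}{\ve_2}g_J(\ve_2)=\frac{\ve}{\ve_2}g_J(\ve_2)$ for $\ve\in[0,\ve_2]$, which after exponentiating and taking suprema yields $F(\ve)\le F(\ve_2)^{\ve/\ve_2}$, a continuous majorant; a symmetric lower bound on $[\ve_1',\ve_1]$ via the chord from $\ve_1'$ to $\ve_1$ controls $F$ from below. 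Combining the one-sided convexity estimates around each interior point, together with the monotonicity already proved, pins down continuity on the open interval, and continuity at $0$ follows from $1\le F(\ve)\le F(\ve_2)^{\ve/\ve_2}\to 1$ as $\ve\to0^+$ together with $F(0)=[1]_{A_\infty}=1$. (One must be slightly careful that the pointwise convexity estimates are uniform in $J$ — they are, since the only $J$-dependent quantities are the finite values $g_J(\ve_2)\le\log F(\ve_2)$.)

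Finally, for the blow-up $\lim_{\ve\to\ve_\varphi}F(\ve)=\infty$: suppose instead that $F$ stays bounded, say $F(\ve)\le M$ for all $\ve<\ve_\varphi$. Then $\av{e^{\ve\varphi}}J\le M e^{\ve\av{\varphi}J}$ uniformly, and I would push $\ve$ slightly past $\ve_\varphi$ by a limiting/monotone-convergence argument: monotone convergence in $\ve\uparrow\ve_\varphi$ gives $\av{e^{\ve_\varphi\varphi}}J\le M e^{\ve_\varphi\av{\varphi}J}<\infty$, so $e^{\ve_\varphi\varphi}\in A_\infty$; but then the integral John--Nirenberg inequality~\eqref{jn2} (or Gehring self-improvement, Theorem~\ref{t7}) applied to $e^{\ve_\varphi\varphi}$ shows $e^{\delta\ve_\varphi\varphi}\in A_\infty$ for some $\delta>1$, contradicting the definition~\eqref{ephi} of $\ve_\varphi$ as a supremum. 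If $\ve_\varphi=\infty$ the statement $\lim F=\infty$ is vacuous or follows directly from strict monotonicity plus the same self-improvement argument applied at each scale, so the interesting case is $\ve_\varphi<\infty$. The main obstacle, I expect, is the continuity argument: one needs the convexity estimates to be genuinely uniform in the interval $J$ and to handle both one-sided limits at an arbitrary interior point without accidentally using finiteness of $F$ beyond $\ve_\varphi$; the monotonicity and blow-up parts are comparatively soft once the log-convexity of $\ve\mapsto\av{e^{\ve\varphi}}J$ is in hand.
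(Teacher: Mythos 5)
Your treatment of monotonicity and continuity is sound and, at bottom, close to the paper's: H\"older's inequality is precisely the log-convexity of $\ve\mapsto\av{e^{\ve\varphi}}J$ that you invoke, and your bound $F(\ve)\le F(\ve_2)^{\ve/\ve_2}$ is the paper's first display. Two small repairs are needed, though. First, the supremum of a family of nondecreasing functions that merely \emph{contains} one strictly increasing member need not be strictly increasing, so "there is at least one $J$ on which $\varphi$ is non-constant, which forces strict monotonicity of $F$" does not stand as written; what does work is exactly $F(\ve_1)\le F(\ve_2)^{\ve_1/\ve_2}<F(\ve_2)$, using $F(\ve_2)>1$ (which is where non-constancy enters). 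Second, for continuity the paper proves a uniform Lipschitz bound $0\le F(\ve_2)-F(\ve_1)\le(\ve_2-\ve_1)F(\ve_*)\|\varphi\|_{\BMO^{\ve_*/\delta}}$ via $1-e^t\le -t$ and H\"older; your alternative route, through the fact that the uniformly bounded convex family $\{g_J\}$ is locally uniformly Lipschitz, also works, but you must actually record that the one-sided difference quotients on $[0,\ve_*-\delta]$ are bounded by $\log F(\ve_*)/\delta$ uniformly in $J$ -- the single chord estimate $F(\ve)\le F(\ve_2)^{\ve/\ve_2}$ only gives continuity at $\ve=0$.

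The genuine gap is the blow-up claim when $\ve_\varphi=\infty$. You dismiss this case as ``vacuous or following from strict monotonicity plus self-improvement,'' but it is neither: $\lim_{\ve\to\infty}F(\ve)=\infty$ is a real assertion, a strictly increasing function can be bounded, and if $F(\ve)\le M$ for all $\ve$ there is no contradiction with the definition of $\ve_\varphi=\infty$ to be extracted from Gehring's theorem, since every $e^{\ve'\varphi}$ is already assumed to lie in $A_\infty$. The paper closes this case by a separate argument: from $e^{\ve\tau}\,\frac1{|J|}|\{t\in J:\varphi-\av{\varphi}J>\tau\}|\le\av{e^{\ve(\varphi-\av{\varphi}J)}}J\le L$ valid for all $\ve>0$, letting $\ve\to\infty$ forces $|\{\varphi-\av{\varphi}J>\tau\}|=0$ for every $\tau>0$ and every $J$, hence $\varphi=\av{\varphi}J$ a.e.\ on every $J$, i.e.\ $\varphi$ is constant, contradicting the hypothesis. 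You need this (or an equivalent) argument; your monotone-convergence-plus-Gehring step correctly handles only the case $\ve_\varphi<\infty$.
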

 \begin{proof}
 Note that since $\varphi$ is not constant, $F(\ve)>0$ if $\ve>0.$ If $0\le\ve_1< \ve_2<\ve_\varphi,$ then by H\"older's inequality, for any cube $J,$
 $$
 \bav{e^{\ve_1(\varphi-\sav{\varphi}J)}}J\le  \bav{e^{\ve_2(\varphi-\sav{\varphi}J)}}J^{\ve_1/\ve_2}
 $$
 and so $F(\ve_1)\le F(\ve_2)^{\ve_1/\ve_2}< F(\ve_2),$  thus $F$ is strictly increasing.
 
Now, take any $\ve_*\in(0,\ve_\varphi),$ any $\delta\in(0,\ve_*),$ and any $\ve_1,\ve_2\in[0,\ve_*-\delta]$ such that $\ve_1\le\ve_2.$ For any cube $J$ we have
\begin{align*}
\bav{e^{\ve_2(\varphi-\sav{\varphi}J)}}J-\bav{e^{\ve_1(\varphi-\sav{\varphi}J)}}J&=
\bav{e^{\ve_2(\varphi-\sav{\varphi}J)}
\big[1-e^{(\ve_1-\ve_2)(\varphi-\sav{\varphi}J)}\big]}J\\
&\le(\ve_2-\ve_1)\bav{e^{\ve_2(\varphi-\sav{\varphi}J)}(\varphi-\av{\varphi}J)}J\\
&\le (\ve_2-\ve_1)\bav{e^{\ve_*(\varphi-\sav{\varphi}J)}}J^{\ve_2/\ve_*}
\bav{|\varphi-\av{\varphi}J|^{\ve_*/(\ve_*-\ve_2)}}J^{(\ve_*-\ve_2)/\ve_*}\\
&\le (\ve_2-\ve_1)\bav{e^{\ve_*(\varphi-\sav{\varphi}J)}}J
\bav{|\varphi-\av{\varphi}J|^{\ve_*/\delta}}J^{\delta/\ve_*}\\
&\le (\ve_2-\ve_1)F(\ve_*)\|\varphi\|_{\BMO^{\ve_*/\delta}}.
\end{align*}
Here we first used the elementary inequality $1-e^{t}\le -t,$ then H\"older's inequality, and then the monotonicity in $p$ of the expression $\av{|\varphi-\av{\varphi}J|^p}J^{1/p}.$

Therefore,
$$
 0\le F(\ve_2)-F(\ve_1)\le \sup_{\text{cube~J}} 
\big(\bav{e^{\ve_2(\varphi-\sav{\varphi}J)}}J-\bav{e^{\ve_1(\varphi-\sav{\varphi}J)}}J\big)
\le (\ve_2-\ve_1)F(\ve_*)\|\varphi\|_{\BMO^{\ve_*/\delta}},
$$
which means that $F$ is continuous on $[0,\ve_*-\delta]$ and so on $[0,\ve_\varphi).$

Lastly, let $L=\lim_{\ve\to\ve_\varphi}F(\ve)$ and assume that $L$ is finite. If $\ve_\varphi=\infty,$ then for any $\ve>0,$ $\tau>0,$ and cube $J,$
$$
e^{\ve\tau}\frac1{|J|}\big|\{t\in J:~\varphi-\av{\varphi}J>\tau\}\big|\le\bav{e^{\ve(\varphi-\sav{\varphi}J)}}J\le L.
$$
Taking the limit as $\ve\to\infty,$ we conclude that $|\{t\in J:~\varphi-\av{\varphi}J>\tau\}|=0.$ Since this is true for all $\tau>0,$ we have $\varphi\le\av{\varphi}J$ {\it a.e.} on $J,$ and, thus, $\varphi=\av{\varphi}J$ {\it a.e.} on $J.$ Since this holds for all $J,$ $\varphi$ is constant, a contradiction. 

If $\ve_\varphi<\infty,$ then by Fatou's lemma the expressions $\av{e^{\ve_\varphi(\varphi-\sav{\varphi}J)}}J$ are bounded uniformly in $J$ and so $e^{\ve_\varphi\varphi}$ is an $A_\infty$ weight. However, by Gehring's theorem there then exists $\eta>0$ such that 
$e^{\ve_\varphi(1+\eta)\varphi}\in A_\infty,$ which contradicts the definition of $\ve_\varphi.$
 \end{proof}
Most of our sharp inequalities are extremized by the same function, the logarithm. The following  lemma makes this precise.
For $p>0$ let
$$
\omega(p)=\left[\frac pe\Big(\Gamma(p)-\int_0^1t^{p-1}e^t\,dt\Big)+1\right]^{1/p}.
$$
\begin{lemma}
\label{phi0}
Let $\varphi_0(t)=\log(1/t),~t\in(0,1).$ Then
\eq[eps_phi]{
\ve_{\varphi_0}=1,\qquad \ve_{-\varphi_0}=\infty.
}
If $p\ge1,$ then
\eq[norm]{
\|\varphi_0\|_{\BMO^p((0,1))}=\omega(p).
}
Consequently,
\eq[norm1]{
\ve_0(p)\le \omega(p)
}
and
\eq[norm2]{
C(\ve,p)\ge \frac{e^{-\ve/\omega(p)}}{1-\ve/\omega(p)},\quad 0\le\ve<\omega(p).
}

\end{lemma}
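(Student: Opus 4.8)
\textbf{Proof proposal for Lemma~\ref{phi0}.}

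The plan is to compute everything directly from the definition of $\varphi_0(t)=\log(1/t)$ on $(0,1)$, exploiting the explicit structure of $e^{\ve\varphi_0}=t^{-\ve}$. First I would establish~\eqref{eps_phi}. For $\ve>0$, $e^{\ve\varphi_0}=t^{-\ve}$, and on a subinterval $J=(a,b)\subset(0,1)$ the quantities $\av{t^{-\ve}}J$, $\av{t^{\ve}}J$, and $\av{\log t}J$ are all elementary integrals. I would check that $e^{\ve\varphi_0}\in A_\infty$ precisely when $\ve<1$: the average $\av{t^{-\ve}}J$ is finite for all subintervals iff $\ve<1$ (the problematic intervals are those with left endpoint $0$), and the product $\av{t^{-\ve}}Je^{-\av{\log t^{-\ve}}J}$ stays bounded over all $J$ in that range — this is a short scaling computation since the worst case is $J=(0,b)$, on which the ratio is a constant independent of $b$. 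For $\ve=1$ the average $\av{t^{-1}}{(0,b)}$ diverges, so $e^{\varphi_0}\notin A_\infty$, giving $\ve_{\varphi_0}=1$. For $-\varphi_0$ we have $e^{-\ve\varphi_0}=t^{\ve}$, which is bounded on $(0,1)$, hence trivially in $A_\infty$ for every $\ve>0$, so $\ve_{-\varphi_0}=\infty$.

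Next I would prove~\eqref{norm}. Because $\varphi_0$ is monotone decreasing and convex, the $\BMO^p$ norm is attained on intervals of the form $J=(0,b)$; this is the only genuinely geometric point and should follow from a monotonicity/rearrangement argument (the oscillation of a decreasing function on $(a,b)$ is dominated, after rescaling, by that on $(0,b-a)$, using that $\log(1/t)$ dilates to $\log(1/t)+\const$). On $J=(0,b)$, translation invariance of the oscillation lets us take $b=1$, so $\|\varphi_0\|_{\BMO^p}^p=\av{|\varphi_0-\av{\varphi_0}{(0,1)}|^p}{(0,1)}$. Now $\av{\varphi_0}{(0,1)}=\int_0^1\log(1/t)\,dt=1$, so we must evaluate $\int_0^1|\log(1/t)-1|^p\,dt$. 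The substitution $s=\log(1/t)$, $t=e^{-s}$, $dt=-e^{-s}\,ds$ turns this into $\int_0^\infty|s-1|^p e^{-s}\,ds$, which splits at $s=1$ into $\int_1^\infty(s-1)^pe^{-s}\,ds+\int_0^1(1-s)^pe^{-s}\,ds$. The first integral, after $u=s-1$, is $e^{-1}\int_0^\infty u^pe^{-u}\,du=e^{-1}\Gamma(p+1)=\frac pe\Gamma(p)$; the second, after $u=1-s$, is $e^{-1}\int_0^1 u^pe^{u}\,du$. Hence $\int_0^1|\log(1/t)-1|^p\,dt=\frac pe\Gamma(p)-\frac1e\int_0^1 u^pe^u\,du+1-1$? — I need to recheck the bookkeeping: $1=\int_0^1e^{-s}\cdot 1\,ds$-type constant terms must be tracked, but the net answer should collapse to $\frac pe\big(\Gamma(p)-\int_0^1 t^{p-1}e^t\,dt\big)+1$ after an integration by parts that converts $\int_0^1 u^pe^u\,du$ into a $t^{p-1}$ integral plus the boundary term $e$; this routine calculation yields exactly $\omega(p)^p$, establishing~\eqref{norm}.

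Finally, \eqref{norm1} and~\eqref{norm2} are immediate corollaries. Since $\|\varphi_0\|_{\BMO^p((0,1))}=\omega(p)$ and $\ve_{\varphi_0}=1$, the normalized function $\psi=\varphi_0/\omega(p)$ has $\|\psi\|_{\BMO^p}=1$ and $\ve_\psi=\omega(p)$; by the elementary bound $\ve_0(p)\le\inf\{\ve_\varphi:\|\varphi\|_{\BMO^p}=1\}$ noted in Section~\ref{main_results}, we get~\eqref{norm1}. For~\eqref{norm2}, apply the integral John--Nirenberg inequality~\eqref{jn2} to $\psi$ at level $\ve/\omega(p)<1$: we must compute $\av{e^{(\ve/\omega(p))\psi}}{(0,1)}=\av{t^{-\ve/\omega(p)}}{(0,1)}$ and compare with $e^{\av{(\ve/\omega(p))\psi}{(0,1)}}$; both are explicit, and the ratio is exactly $\frac{e^{-\ve/\omega(p)}}{1-\ve/\omega(p)}$, forcing $C(\ve,p)$ to be at least this large.

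\textbf{Main obstacle.} The one step requiring real care is the claim that the $\BMO^p$ supremum over subintervals of $(0,1)$ is achieved on intervals anchored at the origin; everything else is calculus. I expect this to follow from the convexity and monotonicity of $\varphi_0$ together with the fact that dilations act on $\varphi_0$ by additive constants (which the oscillation ignores), so that shrinking an interval toward $0$ only increases the normalized oscillation — but making this rigorous for general $p$, rather than just $p=2$ where one has the clean variance formula, is where the argument needs a careful comparison or a direct one-variable optimization.
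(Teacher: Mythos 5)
Your overall architecture matches the paper's: explicit computation of the $A_\infty$ characteristic of $t^{-\ve}$, reduction of the $\BMO^p$ norm to intervals anchored at $0$, the elementary calculation $\int_0^\infty|s-1|^pe^{-s}\,ds=\omega(p)^p$ (your bookkeeping does close up correctly after the integration by parts), and then \eqref{norm1}, \eqref{norm2} exactly as in the paper. But there is a genuine gap, and it is precisely the one you flag yourself: the claim that $\sup_{I\subset(0,1)}\av{|\varphi_0-\av{\varphi_0}I|^p}I$ is attained (in the limit) on intervals $I=(0,b)$. Your heuristic --- that dilations act on $\varphi_0$ by additive constants, so pushing $I$ toward the origin ``only increases'' the normalized oscillation --- is not a proof, and no soft rearrangement argument is known to deliver it for general $p$. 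This step is the bulk of the paper's proof: after the substitution $t\mapsto t\,a^{a/(b-a)}b^{-b/(b-a)}$ the oscillation on $I=(a,b)$ becomes $U(z)=\frac1{z-w}\int_w^z|\log t+1|^p\,dt$ with $w\log w=z\log z$, $1/e<z<1$, and one must show $U'\ge0$ on $(1/e,1)$. The paper does this by bounding $(z-w)U(z)$ from above using the monotonicity of $f(t)=|\log t+1|^p$ on either side of $1/e$ together with the exponent $1-1/p$ (this is where $p\ge1$ enters essentially), and then reducing the desired sign condition to the elementary inequalities $\log w\le -ez$ and $2+\log z-ez\le0$. Without some such one-variable optimization your proof of \eqref{norm} is incomplete, and with it \eqref{norm1}--\eqref{norm2} (which need the exact value $\|\varphi_0\|_{\BMO^p}=\omega(p)$, not just an upper bound).

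Two smaller points. First, ``$t^{\ve}$ is bounded, hence trivially in $A_\infty$'' is a non sequitur: boundedness of a weight does not imply membership in $A_\infty$ (consider $e^{-1/t}$ near $0$, whose logarithm is not locally integrable). The conclusion is correct, but it requires the same explicit computation as for $t^{-\ve}$; the paper handles both signs at once by computing $\bav{e^{\ve(\varphi_0-\sav{\varphi_0}I)}}I=\frac{e^{-\ve}}{1-\ve}\bigl[\theta^{-\ve\theta/(1-\theta)}\frac{1-\theta^{1-\ve}}{1-\theta}\bigr]$ with $\theta=a/b$ for all $\ve<1$, negative $\ve$ included. Second, your assertion that the worst interval for the $A_\infty$ ratio is $J=(0,b)$ is again exactly what must be proved; the paper proves it by showing the bracketed expression above is decreasing in $\theta$ with limit $1$ as $\theta\to0$. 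That monotonicity is an elementary one-variable fact, so this gap is minor, but it should be stated as something to verify rather than assumed.
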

\begin{remark}
In this paper, we only need~\eqref{norm}-\eqref{norm2} for $p\in[1,2],$ but the proof given below works for all $p\ge1.$ What is more, these statements actually hold for all $p>0,$ but showing this requires more involved computation.
\end{remark}
\begin{proof}
Let $Q=(0,1).$ To show~\eqref{eps_phi}, take any subinterval $I=(a,b)$ of $Q$ with $a>0.$ We have $\av{\varphi_0}I=\frac1{b-a}(a\log a-b\log b)+1$ and a simple calculation shows that for any $\ve\in(-\infty,1),$
$$
\bav{e^{\ve(\varphi_0-\sav{\varphi_0}I)}}I=\frac{e^{-\ve}}{1-\ve}\left[\theta^{-\frac{\ve\theta}{1-\theta}}\,\frac{1-\theta^{1-\ve}}{1-\theta}\right],
$$
where $\theta:=a/b.$ The expression in brackets is a decreasing function of $\theta$ for any $\ve\in(-\infty,1),$ and its limit as $\theta\to0$ (which corresponds to the case $a=0$) is 1. Therefore,
\eq[char]{
[e^{\ve\varphi_0}]_{A_\infty}=\frac{e^{-\ve}}{1-\ve},\quad 0\le\ve<1,\qquad\text{and}\qquad [e^{-\ve\varphi_0}]_{A_\infty}=\frac{e^{\ve}}{1+\ve},\quad0\le\ve<\infty,
}
which proves~\eqref{eps_phi}.

To show~\eqref{norm}, first note that if $b>0,$ then $\av{\varphi_0}{(0,b)}=1-\log b$ and
\begin{align*}
\av{|\varphi_0-\av{\varphi_0}{(0,b)}|^p}{(0,b)}&=\frac1b\,\int_0^b|-\log t+\log b-1|^p\,dt
=\int_0^1|\log t+1|^p\,dt\\
&=\int_{-\infty}^1|s|^pe^{s-1}\,ds=\frac pe\Big(\Gamma(p)-\int_0^1t^{p-1}e^t\,dt\Big)+1=\omega^p(p).
\end{align*}
Thus, it remains to show that $\av{|\varphi_0-\av{\varphi_0}I|^p}I\le\int_0^1|\log t+1|^p\,dt$ when $I=(a,b)\subset Q$ with $a>0.$
After the change of variable $t\mapsto t\,a^{a/(b-a)}b^{-b/(b-a)}$ we have
\begin{align*}
\av{|\varphi_0-\av{\varphi_0}I|^p}I&=\frac1{b-a}\,\int_a^b|-\log t-\av{\varphi_0}I|^p\,dt=\frac1{z-w}\,\int_{w}^{z}f(t)\,dt=:U,
\end{align*}
where we set $f(t)=|\log t+1|^p,$ $w=(a/b)^{b/(b-a)},$ and $z=(a/b)^{a/(b-a)}.$ It is easy to check that $0<w<1/e<z<1$ and that $w$ is given as a function of $z$ by the equation $w\log w=z\log z;$ that, in turn, defines $U$ as a function of $z.$
We would like to show that $U$ is increasing on the interval $(1/e,1)$ and, thus, $U(z)\le U(1)=\omega^p(p).$ To that end, we compute:
\begin{align*}
(z-w)U'(z)&=-U(z)(1-w')+f(z)-f(w)w'\\
&=\frac1{1+\log w}\,\left(\log(z/w)U(z)+f(z)(1+\log w)-f(w)(1+\log z)\right)\\
&=\frac1{1+\log w}\,\left[\log(z/w)U(z)+(1+\log w)(1+\log z)\big(f(w)^{1-1/p}+f(z)^{1-1/p}\big)\right],
\end{align*}
where we used that $w'=(1+\log z)/(1+\log w).$ Since $1+\log w<0,$ we need to show that the expression in brackets is non-positive. Since $f$ is decreasing on $(0,1/e)$ and increasing on $(1/e,1),$ and because $p\ge1,$ we can estimate $U$ as follows:
\begin{align*}
(z-w)U(z)&=\int_w^{1/e}f(t)\,dt+\int^z_{1/e}f(t)\,dt\\
&\le -f(w)^{1-1/p}\int_w^{1/e}(1+\log t)\,dt+f(z)^{1-1/p}\int_{1/e}^z(1+\log t)\,dt\\
&=(1/e+z\log z)\left(f(w)^{1-1/p}+f(z)^{1-1/p}\right).
\end{align*}
Therefore, it suffices to show that
$$
\log(z/w)(1/e+z\log z)\le -(1+\log w)(1+\log z)(z-w).
$$
Since $w\log w=z\log z,$ after simplification this becomes 
$$
1+\log z+\log w\Big(1-\frac1{ez}\Big)\le0.
$$
We leave it to the reader to verify that $\log w\le-ez.$ With this in mind, and because $z\ge 1/e,$ it suffices to show that
$2+\log z-ez\le0,$ which is elementary.
This proves~\eqref{norm}.

To show~\eqref{norm1}, note that $\ve_{\varphi_0}=1$ and so
$$
\ve_0(p)\le\inf\{\ve_\varphi:~ \|\varphi\|_{\BMO^p(Q)}=1 \}\le 
\ve_{\varphi_0/\|\varphi_0\|_{\scriptscriptstyle \BMO^p(Q)}}=\|\varphi_0\|_{\BMO^p(Q)}.
$$

Finally, \eqref{norm2} holds because if $\ve\in[0,\omega(p)),$ then
$$
C(\ve,p)\ge \bav{e^{\ve(\varphi_0-\sav{\varphi_0}Q)/\|\varphi_0\|_{\BMO^p(Q)}}}{Q}=e^{-\ve/\omega(p)}\int_0^1t^{-\ve/\omega(p)}\,dt
= \frac{e^{-\ve/\omega(p)}}{1-\ve/\omega(p)}.
\eqno\qedhere
$$
\end{proof}

\begin{proof}[Proof of Theorem~\ref{main}]
Take any $\varphi\in\BMO(Q)$ with $\|\varphi\|_{\BMO^p(Q)}=1.$ For all $\ve\in[0,\ve_\varphi),$ let
$
F_\varphi(\ve)=[e^{\ve\varphi}]_{A_\infty(Q)}.
$
By the definition of $\bel{b}_{p,C}$ and the assumption on $b_C,$ for any cube $J\subset Q,$
$$
b_{F_\varphi(\ve)}\big(\av{\ve\varphi}J,\av{e^{\ve\varphi}}J\big)\le\bel{b}_{p,F_\varphi(\ve)}\big(\av{\ve\varphi}J,\av{e^{\ve\varphi}}J\big)\le\av{|\ve\varphi|^p}J. 
$$
Replacing $\varphi$ with $\varphi-\av{\varphi}J$ gives
$$
b_{F_\varphi(\ve)}\big(0,\av{e^{\ve(\varphi-\av{\varphi}{\scriptscriptstyle J})}}J\big)\le \ve^p\av{|\varphi-\av{\varphi}J|^p}J 
\le\ve^p\|\varphi\|^p_{\BMO^p(Q)}=\ve^p.
$$
Take a sequence $\{J_n\}$ of subcubes of $Q$ such that
$$
\lim_{n\to\infty}\av{e^{\ve(\varphi-\av{\varphi}{\scriptscriptstyle J_n})}}{J_n}=F_\varphi(\ve).
$$
Since $b_C(0,\cdot)$ is continuous, we have
\eq[mt3]{
b_{F_\varphi(\ve)}(0,F_\varphi(\ve))\le\ve^p.
}
Recall that $\ve_0(p)\le \inf\{\ve_\varphi: \|\varphi\|_{\BMO^p}=1\}.$
Assume first that there exists $\varphi$ such that $\|\varphi\|_{\BMO^p}=1$ and $\ve_\varphi=\ve_0(p).$ For that particular $\varphi,$ take the limit as $\ve\to\ve_0(p)$ in~\eqref{mt3}. By Lemma~\ref{cont}, $F$ is continuous on $[0,\ve_0(p))$ and $F_\varphi(\ve)\to\infty$ as $\ve\to\ve_0(p).$ This gives~\eqref{mt1}.

If no such $\varphi$ exists, then for all $\varphi$ with $\|\varphi\|_{\BMO^p}=1$ we have $e^{\ve_0(p)\varphi}\in A_\infty,$ and by the definition of $\ve_0(p)$ there exists a sequence $\{\varphi_k\}$ of such functions for which 
$C_k:=F_{\varphi_k}(\ve_0(p))\to\infty$ as $k\to\infty.$ Using~\eqref{mt3} with $\varphi=\varphi_k$ and $\ve=\ve_0(p)$ we have
$$
b_{C_k}(0,C_k)\le\ve^p_0(p),
$$
and taking the limit as $k\to\infty$ again gives~\eqref{mt1}.

To show~\eqref{mt2}, observe that~\eqref{mt3} means that
$$
G(F_\varphi(\ve))\le \ve.
$$
By Lemma~\ref{cont}, there exists $\ve^*\in[0,\ve_\varphi)$ such that $F_\varphi(\ve^*)=C_0.$ Since $G$ is strictly increasing on $[C_0,\infty),$ we have
$$
F_\varphi(\ve)\le G^{-1}(\ve),\quad \ve^*\le\ve< \ve_\varphi,
$$
and so
$$
C(\ve,p)\le G^{-1}(\ve),\quad \ve^*\le \ve<\ve_0(p). \eqno\qedhere
$$
\end{proof}

\begin{proof}[Proof of Theorem~\ref{t3}]
Note that for all $p$ and for any interval $J\subset Q,$
\eq[ineq]{
\bel{b}_{p,C}\big(0,\av{e^{\varphi-\av{\varphi}{\scriptscriptstyle J}}}J\big)\le \av{|\varphi -\av{\varphi}J|^p}J
}
and
$$
 \av{\varphi^2}J- \av{\varphi}J^2\le \bel{B}_{2,C}\big(0,\av{e^{\varphi-\av{\varphi}{\scriptscriptstyle J}}}J\big),
$$
and both inequalities are sharp by the definitions of $\bel{b}_{p,C}$ and $\bel{B}_{2,C},$ in that for each inequality there exists a sequence $\{\varphi_n\}$ of functions on $J,$ such that the inequality becomes equality in the limit (in fact, as shown in Section~\ref{optimizers}, these sequences are realized as single functions).

The function $\bel{B}_{2,C}$ is given by~Theorem~\ref{all_bellman} together with~\eqref{B2}. We have $\bel{B}_{2,C}(0,x_2)=-(u^-(0,x_2))^2-2\xi^-u^-(0,x_2),$ and this is easily seen to be increasing in $x_2$ on the interval $[1,C].$ Since $u^-(0,C)=-\xi^-,$ 
$$
\av{\varphi^2}J-\av{\varphi}J^2
\le \bel{B}_{2,C}(0,C)=(\xi^-)^2.
$$
Note that this estimate is still sharp, and, hence, so is the one obtained by taking the supremum over all $J$ on the left, i.e., the upper inequality in~\eqref{est1}. 

To show the lower estimate for all $p$ take a sequence $\{J_n\}$ such that $\av{e^{\varphi-\av{\varphi}{\scriptscriptstyle J_n}}}{J_n}\to C.$ By Theorem~\ref{all_bellman}, $\bel{b}_{p,C}$ for $p\in(1,2]$ is given by~\eqref{bp} and $\bel{b}_{1,C}$ is given by~\eqref{b1}. In both cases $\bel{b}_{p,C}(0,x_2)$ is continuous in $x_2$ on the interval $[1,C].$  Thus,
$$
\bel{b}_{p,C}(0,C)\le\limsup_{n\to\infty} (\av{|\varphi -\av{\varphi}{J_n}|^p}{J_n})\le \|\varphi\|^p_{\BMO^p(Q)}.
$$
From~\eqref{bp}, $\bel{b}_{p,C}(0,C)=(\xi^+)^p\ve^p_0(p),$ and from~\eqref{b1} and~\eqref{kc}, $\bel{b}_{1,C}(0,C)=k(C).$ This proves the lower estimate in~\eqref{est1}, as well as both~\eqref{est2} and~\eqref{est3}. 

We now need to show sharpness in the first and third cases; note that it does not follow from the fact that the left-hand inequality in~\eqref{ineq} is sharp. Let $\varphi^+=\xi^+\log(1/t).$ By Lemma~\ref{phi0}, $\|\varphi^+\|_{\BMO^p((0,1))}=\xi^+\ve_0(p).$ On the other hand, by Lemma~\ref{phi0}, $e^{\varphi^+}\in A_\infty((0,1))$ and $[e^{\varphi^+}]_{A_\infty((0,1))}=e^{-\xi^+}/(1-\xi^+)=C.$ This completes the proof.
\end{proof}

\begin{proof}[Proof of Theorem~\ref{t8}]
Recall the Bellman function $\bel{D}_{\lambda,C}$ given by Theorem~\ref{all_bellman} and the formulas~\eqref{wfd},~\eqref{wf}; write $D$ for $\bel{D}_{\lambda,C}.$ Take $\varphi$ such that $e^\varphi\in \AC.$ Let $y=\av{e^{\varphi-\sav{\varphi}Q}}Q.$ By definition of $D,$ 
$$
\frac1{|Q|}\,\big|\{t\in Q:~\varphi(t)-\av{\varphi}Q\ge\lambda\}\big|\le D(0,y),
$$
and this estimate is sharp for any $y\in[1,C].$ Clearly, the same is true with any subinterval $J$ of $Q$ in place of $Q.$

We would like to sharply bound the right-hand side. That bound depends on the location of the point $(0,y)$ within $\Omega_C.$ Note that $D(0,y)$ is identically 1 if $(0,y)\in\Omega_4(\lambda);$ decreasing in $y$ if $(0,y)\in\Omega_3(\lambda);$ and increasing in $y$ if 
$(0,y)\in\Omega_1(\lambda)\cup\Omega_2(\lambda).$ (The last fact can be seen by means of direct differentiation, carried out in part (5) of the proof of Lemma~\ref{concon} in Section~\ref{induction}.) This means that if $0\ge \lambda,$ the best bound on $D(0,y)$ is trivial, i.e., $\max_yD(0,y)=1;$ if $\lambda+\xi^-\le 0\le \lambda,$ then $\max_yD(0,y)=D(0,e^{\lambda}(-Ce^{\xi^-}\lambda+1));$ and if $0\le \lambda+\xi^-,$ then $\max_yD(0,y)=D(0,C).$ After computing the last two quantities, we have 
$$
\frac1{|J|}\,\big|\{t\in J:~\varphi(t)-\av{\varphi}J\ge\lambda\}\big|\le\max_yD(0,y)=
\begin{cases}
1,& \lambda\le 0,
\bigskip

\\
1-\frac{\lambda}{\xi^+-\xi^-},& 0\le \lambda \le -\xi^-,
\bigskip

\\
\frac{\xi^+e^{-\xi^-/\xi^+}}{\xi^+-\xi^-}\,e^{-\lambda/\xi^+},& \lambda\ge -\xi^-,
\end{cases}
$$
and this estimate remains sharp.
\end{proof}

\begin{remark}
It will be useful to have an estimate for the right-hand side of~\eqref{ones} that involves only $\xi^+.$ It is a straightforward exercise to verify that $(1-\xi^-)^{1/\xi^+}\le \frac e2\,\big(1-\frac{\xi^-}{\xi^+} \big).$ Therefore,
\eq[simple]{
\frac{e^{-\xi^-/\xi^+}}{1-\xi^-/\xi^+}\le \frac{e}2\,\Big(\frac{e^{-\xi^-}}{1-\xi^-}\Big)^{1/\xi^+}=
\frac{e}2\,\Big(\frac{e^{-\xi^+}}{1-\xi^+}\Big)^{1/\xi^+}=\frac 12\,(1-\xi^+)^{-1/\xi^+}.
}
\end{remark}

\begin{proof}[Proof of Theorem~\ref{t7}]
By the definition of the Bellman function $\bel{A}_{\delta,C},$ for any $J\subset Q$
$$
\bav{e^{\delta(\varphi-\sav{\varphi}J)}}J\le \bel{A}_{\delta,C}(0,\av{e^{\varphi-\sav{\varphi}J}}J).
$$
The function $\bel{A}_{\delta,C}$ is given by Theorem~\ref{all_bellman} and formula~\eqref{Ad}. Observe that
$\bel{A}_{\delta,C}(0,\cdot)$ is increasing on $[1,C].$ We have $u^+(0,C)=-\xi^+,$ thus
$$
\bav{e^{\delta(\varphi-\sav{\varphi}J)}}J\le \bel{A}_{\delta,C}(0,C)=\frac{e^{-\delta\xi^+}}{1-\delta\xi^+}.
$$
This inequality is sharp by the definition of $\bel{A}_{\delta,C}.$ Taking supremum over all $J$ on the left, we obtain the sharp inequality in the statement of the theorem.
\end{proof}

\begin{proof}[Proof of Theorem~\ref{t1}]
In light of Lemma~\ref{phi0}, we only need to show the converse inequality to~\eqref{norm1} for all $p\in[1,2];$ the converse inequality to~\eqref{norm2} for $(2-p)\ve_0(p)\le \ve<\ve_0(p);$ and the upper inequality in~\eqref{ce1}. To that end,
we use Theorem~\ref{main} with $b_C=\bel{b}_{p,C}$ given by Theorem~\ref{all_bellman}. 

If $p>1,$ then $\bel{b}_{p,C}$ is given by~\eqref{bp}. From~\eqref{upm}, $u^+(0,C)=-\xi^+,$ so
\begin{align*}
\bel{b}_{p,C}(0,C)&=\frac p{e}\,
\Big[\int_{-\xi{\scriptscriptstyle +}}^\infty |s|^{p-1}\sgn(s)\,e^{-s/\xi^{\scriptscriptstyle +}}ds\Big]+|\xi^+|^p=(\xi^+)^p\left[\frac pe\left(\Gamma(p)-\int_0^1 t^{p-1}e^t\,dt\right)+1\right],
\end{align*}
where the second equality follows from splitting the integral into two and changing the variable. Since $\xi^+\to1$ as $C\to\infty,$ we have
$$
\ve_0(p)\ge\lim_{C\to\infty} \big(\bel{b}_{p,C}(0,C)\big)^{1/p}=\omega(p).
$$
Furthermore, the function $G(C):=(\bel{b}_{p,C}(0,C))^{1/p}=\xi^+(C)\ve_0(p)$ is strictly increasing on the interval $[\frac{e^{p-2}}{p-1},\infty)$ and $G\big(\frac{e^{p-2}}{p-1}\big)=(2-p)\ve_0(p),$ so~\eqref{mt2} gives
$$
C(\ve,p)\le G^{-1}(\ve)=\frac{e^{-\ve/\ve_0(p)}}{1-\ve/\ve_0(p)},\quad (2-p)\ve_0(p)\le \ve<\ve_0(p).
$$

If $p=1,$ then $\bel{b}_{p,C}$ is given by~\eqref{b1}, whence
$$
\bel{b}_{1,C}(0,C)=2\,\frac{(1-\xi^-)(1-\xi^+)}{\xi^+-\xi^-}\,(C-1)=k(C).
$$ 
and%Since $(1-\xi^-)/(\xi^+-\xi^-)\to 1$ and $(1-\xi^+)C=e^{-\xi^+}\to\frac1e$ as $C\to\infty,$ we have
$$
\ve_0(1)\ge\lim_{C\to\infty}k(C)=\frac2e.
$$
Lastly, since the function $k$ is strictly increasing on the interval $[1,\infty),$ we have
$$
C(\ve,1)\le k^{-1}(\ve).
$$
This completes the proof.
\end{proof}

\begin{proof}[Proof of Theorem~\ref{t2}]
Take $\varphi\in\BMO(Q)$ with $\|\varphi\|_{\BMO^p}\ne0$ and for all $0\le \ve<\ve_\varphi$  let $F(\ve)=[e^{\ve\varphi}]_{A^\infty(Q)}.$ Then $F(0)=1$ and by Lemma~\ref{cont}, $F(\ve)\to\infty$ as $\ve\to \ve_\varphi$ and $F$ is continuous on $[0,\ve_\varphi).$ Therefore, there exists an $\ve^*>0$ such that $F(\ve^*)=\frac{e^{p-2}}{p-1}.$ By Theorem~\ref{t3},
\eq[wt6]{
2-p=\xi^+\big(F(\ve^*)\big) \le \frac{\ve^*}{\ve_0(p)}\|\varphi\|_{\BMO^p}.
}
Using~\eqref{ones},~\eqref{simple}, and~\eqref{wt6}, we have
\begin{align*}
\frac1{|Q|}\,\big|\{t\in Q:~\ve^*(\varphi(t)-\av{\varphi}J)\ge\lambda\}\big|&\le \frac 12\,\left((1-\xi^+\big(F(\ve^*)\big)\right)^{-1/\xi^+(F(\ve^*))} \,e^{-\frac\lambda{\xi^+(F(\ve^*))}}\\
&\le\frac 12\,(p-1)^{-\frac1{2-p}} \,e^{-\frac{\ve_0(p)\lambda}{\ve^*\|\varphi\|_{\BMO^p}}}
\end{align*}

Replacing $\lambda/\ve^*$ with $\lambda$ gives
\eq[wt61]{
\frac1{|Q|}\,\big|\{t\in Q:~\varphi(t)-\av{\varphi}J\ge\lambda\}\big|\le \frac 12\,(p-1)^{-\frac1{2-p}}  \,e^{-\frac{\ve_0(p)\lambda}{\|\varphi\|_{\BMO^p}}}.
}
The same estimate holds with $\varphi$ replaced with $-\varphi,$ which means that we can obtain the inequality in the statement of the theorem by doubling the constant in~\eqref{wt61}.
\end{proof}

\begin{proof}[Proof of Corollary~\ref{t2.5}]
Take any $\varphi\in\BMO(Q).$ Without loss of generality, assume $\ve_\varphi<\infty.$ For $\ve\in[0,\ve_\varphi),$ let $F(\ve)=[e^{\ve\varphi}]_{A_\infty(Q)}.$ If $p\in(1,2]$ then by Lemma~\ref{cont}, for sufficiently large $\ve$ we have $F(\ve)\ge\frac{e^{p-2}}{p-1}$ and, thus, Theorem~\ref{t3} applies:
$$
\|\ve\varphi\|_{\BMO^p(Q)}\ge \xi^+(F(\ve))\ve_0(p).
$$
Take the limit as $\ve\to\ve_\varphi.$ Since $F(\ve)\to \infty,$ we have $\xi^+(F(\ve))\to1$ and so
\eq[dist3]{
\|\varphi\|_{\BMO^p(Q)}\ge\frac{\ve_0(p)}{\ve_\varphi}.
}
The same estimate holds with $-\varphi$ in place of $\varphi,$ thus,
\eq[dist4]{
\|\varphi\|_{\BMO^p(Q)}\ge\frac{\ve_0(p)}{\min\{\ve_{\varphi},\ve_{-\varphi}\}}.
}
Now, take any $f\in L^\infty(Q).$ Then $\ve_{\varphi-f}=\ve_\varphi$ and $\ve_{-\varphi+f}=\ve_{-\varphi},$ so
$$
\|\varphi-f\|_{\BMO^p(Q)}\ge\frac{\ve_0(p)}{\min\{\ve_{\varphi},\ve_{-\varphi}\}}.
$$
Taking the infimum over all such $f$ gives~\eqref{dist1}.

If $p=1,$ we have for all $\ve\in[0,\ve_\varphi),$
$$
\|\ve\varphi\|_{\BMO^1(Q)}\ge k(F(\ve)).
$$
Since $k(C)\to \frac2e=\ve_0(1)$ as $C\to\infty,$ taking the limit as $\ve\to\ve_\varphi$ again gives~\eqref{dist4} and, consequently,~\eqref{dist1}.  

To prove sharpness, let $Q=(0,1)$ and $\varphi_0(t)=\log(1/t)$ for $t\in Q.$ By Lemma~\ref{phi0}, $\|\varphi\|_{\BMO^p(Q)}=\ve_0(p).$ Since $\ve_{\varphi_0}=1$ and $\ve_{-\varphi_0}=\infty,$
$$
\inf_{f\in L^\infty(Q)}\|\varphi_0-f\|_{\BMO^p(Q)}\le \|\varphi_0\|_{\BMO^p(Q)}=\frac{\ve_0(p)}{\min\{\ve_{\varphi_0},\ve_{-\varphi_0}\}}.
$$

Finally, to show~\eqref{dist2}, take $\varphi\in\BMO(\mathbb{R}).$ For any interval $Q,$ we have $\varphi|_Q\in\BMO(Q);$ let $\ve_{\varphi,Q}=\ve_{\varphi|_Q}.$ Clearly, $\|\varphi\|_{\BMO(\mathbb{R})}\ge\|\varphi|_Q\|_{\BMO(Q)}$ and $\ve_{\varphi}=\inf_Q \ve_{\varphi,Q}.$ Therefore, by~\eqref{dist3},
$
\|\varphi\|_{\BMO(\mathbb{R})}\ge \frac{\ve_0(p)}{\ve_{\varphi,Q}}
$
for any $Q.$
Taking infimum over all $Q$ in the denominator on the right and combining the resulting estimate with the one for $-\varphi$ gives
$$
\|\varphi\|_{\BMO(\mathbb{R})}\ge \frac{\ve_0(p)}{\min\{\ve_{\varphi},\ve_{-\varphi}\}}.
$$ 
As before, for any $f\in L^\infty(\mathbb{R})$ we have $\ve_{\pm\varphi}=\ve_{\pm(\varphi-f)},$ and so~\eqref{dist2}  follows.
\end{proof}

\section{Bellman induction and direct inequalities}
\label{induction}

The main result of this section is the following ``half'' of Theorem~\ref{all_bellman}. 
\begin{lemma}
\label{lemma_induction}
Let $C\ge1.$
\ben[leftmargin=*]
\item[\bf (1)]
If $p\in(1,2]$ and $C\ge \frac{e^{p-2}}{p-1},$ then 
$
b_{p,C}\le \bel{b}_{p,C}.
$
\item[\bf (2)]
$
b_{1,C}\le \bel{b}_{1,C}
$
\item[\bf (3)]
$
B_{2,C}\ge \bel{B}_{2,C}
$
\item[\bf (4)]
For $1\le \delta<1/\xi^+(C),$
$
A_{\delta,C}\ge \bel{A}_{\delta,C}.
$
\item[\bf (5)]
For any $\lambda\in\mathbb{R},$
$
D_{\lambda,C}\ge \bel{D}_{\lambda,C}.
$
\een
\end{lemma}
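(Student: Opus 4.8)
The plan is to prove all five inequalities by the same two-part mechanism, which is the standard Bellman-induction scheme for non-convex domains in dimension one. Part~A establishes the correct \emph{local} behavior of each explicit candidate on $\Omega_C$: namely, that $b_{p,C}$ and $b_{1,C}$ are locally convex on $\Omega_C$, while $B_{2,C}$, $A_{\delta,C}$, and $D_{\lambda,C}$ are locally concave there. Since each candidate is piecewise given by an explicit formula whose graph is a ruled surface foliated by the \ma characteristics described in the Discussion, the natural way to do this is to verify: (i) inside each open subdomain where the candidate is $C^2$, its Hessian is semidefinite of the correct sign (this reduces to checking the sign of a single second derivative transverse to the characteristics, since $\det\mathrm{Hess}=0$ there); (ii) across the interfaces between subdomains the candidate is $C^1$, so that no spurious concavity/convexity is created at the seams; and (iii) the candidate has the correct boundary values on $\Gamma_1$, i.e.\ it satisfies~\eqref{bcond} — this is noted already in the Discussion and is a direct substitution into~\eqref{bp}, \eqref{b1}, \eqref{B2}, \eqref{Ad}, \eqref{wf}. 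The hypothesis $C\ge e^{p-2}/(p-1)$ in part~(1) and $\delta<1/\xi^+$ in part~(4) should enter exactly here, as the conditions guaranteeing that $u^+$, $u^-$, $v$ are well defined and that the relevant transverse second derivative keeps a fixed sign throughout $\Omega_C$. I expect (i) to be a short computation after differentiating the implicit relations~\eqref{upm} and~\eqref{v} for $u^\pm$ and $v$; one would carry out the differentiation once, in a ``master'' lemma (the ``Lemma~\ref{concon}'' referenced in the proof of Theorem~\ref{t8}), rather than five times.

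Part~B converts this local information into the desired global inequalities on the non-convex domain $\Omega_C$ via Vasyunin's geometric lemma and Bellman induction. Concretely, fix $x\in\Omega_C$ and an admissible $\varphi\in E_{x,C,Q}$. One runs a stopping-time / dyadic-subdivision argument on $Q$: at each step split the current interval into two halves, which moves the point $(\av{\varphi}{},\av{e^\varphi}{})$ to two new points whose average is the old point. The obstruction is that $\Omega_C$ is \emph{not} convex, so the segment joining the two children-points may leave $\Omega_C$; Vasyunin's lemma is precisely the one-dimensional device that lets one nonetheless push the estimate along such ``chords'' for a function that is locally convex (resp.\ concave) together with the boundary constraint $e^{x_1}\le x_2\le Ce^{x_1}$ (the $A_\infty$ cone). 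Iterating and passing to the limit, using the martingale convergence of the dyadic averages of $\varphi$ and $e^\varphi$ together with the boundary identity $G(t,e^t)=\{|t|^p,\,t^2,\,e^{\delta t},\,\chi_{[\lambda,\infty)}(t)\}$, yields the Jensen-type inequality
$$
G\big(\av{\varphi}Q,\av{e^\varphi}Q\big)\ \lessgtr\ \av{G(\varphi,e^\varphi)}Q,
$$
with $\lessgtr$ being $\le$ for the convex candidates and $\ge$ for the concave ones. For $G=b_{p,C}$ this reads $b_{p,C}(x)\le\av{|\varphi|^p}Q$; taking the infimum over $\varphi\in E_{x,C,Q}$ gives $b_{p,C}\le\bel{b}_{p,C}$, and symmetrically for the other four (with $\sup$ over $\varphi$ for $B_{2,C}$, $A_{\delta,C}$, $D_{\lambda,C}$).

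Two technical points deserve care and will, I expect, be where the real work lies. The first, already mentioned, is the verification of local concavity/convexity of $D_{\lambda,C}$: it has the most intricate four-piece description~\eqref{wfd}–\eqref{wf}, it is only continuous away from the single point $(\lambda,e^\lambda)$, and one must separately check $C^1$-matching across each of the interfaces $\partial\Omega_i(\lambda)\cap\partial\Omega_j(\lambda)$ as well as the sign of the transverse second derivative in $\Omega_2(\lambda)$, where the auxiliary variable $v(x)$ from~\eqref{v} (rather than $u^\pm$) governs the foliation. The second is making the limiting argument in Part~B rigorous when $G$ is merely continuous and the test functions $\varphi$ are only in $L^1(Q)$: one needs the dyadic averages of $e^\varphi$ to stay in the compact region $[1,C]e^{\av{\varphi}{}}$ (which the $A_\infty$ constraint guarantees) and a uniform-integrability argument to pass the limit through $G$; for $G=D_{\lambda,C}$ one additionally handles the discontinuity at $(\lambda,e^\lambda)$ by an approximation from below. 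Everything else — the explicit formulas for $u^\pm$, the differentiation of~\eqref{upm} and~\eqref{v}, and the bookkeeping of which subdomain a split lands in — is routine and would be relegated to a computational lemma referenced by all five cases.
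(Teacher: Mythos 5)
Your overall architecture --- local convexity/concavity of the candidates plus a Bellman induction driven by Vasyunin's splitting lemma --- is exactly the paper's, but three of your concrete steps would fail or are missing as written. First, the claim in Part~A(ii) that each candidate is $C^1$ across the interfaces is false for $b_{1,C}$ and for $D_{\lambda,C}$: for instance $\partial b_{1,C}/\partial x_2$ equals $0$ in $\Omega_\pm$ but equals $2(1-\xi^-)(1-\xi^+)/(\xi^+-\xi^-)>0$ in $\Omega_0$, and $\partial D_{\lambda,C}/\partial x_2$ jumps from $0$ in $\Omega_4(\lambda)$ to a strictly negative value in $\Omega_3(\lambda)$. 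The correct seam condition is not smoothness but a sign condition on the jump of the transverse derivative ($G_{x_2}$ increasing across the seam for the convex candidates, decreasing for the concave ones); insisting on $C^1$ would make the verification collapse precisely in cases (2) and (5). (Your guess about where $C\ge e^{p-2}/(p-1)$ enters --- the sign of the transverse second derivative of $b_{p,C}$ --- is correct; $u^\pm$ are well defined for all $C\ge1$.)

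Second, the induction step is misdescribed: Vasyunin's lemma does not let you ``push the estimate along chords'' after splitting into halves --- it supplies a special, generally non-dyadic splitting point such that the chord joining the two children lies in the strictly larger domain $\Omega_{C_1}$ for a prescribed $C_1>C$; no choice of splitting keeps the chord inside $\Omega_C$ itself in general. One must therefore run the entire induction with the candidate at level $C_1$, obtain $b_{p,C_1}(x)\le\av{|\varphi|^p}Q$ for $\varphi\in E_{x,C,Q}$, and only at the end let $C_1\to C$ using continuity of $b_{p,C_1}(x)$ in $C_1$; this enlargement-and-limit step is absent from your plan. Third, the passage from bounded to general test functions is not a soft uniform-integrability matter: it rests on a separate truncation lemma asserting that the two-sided cutoff $\varphi_{c,d}$ satisfies $[e^{\varphi_{c,d}}]_{A_\infty}\le[e^{\varphi}]_{A_\infty}$, so that the truncations remain admissible and the bounded case applies to them, after which monotone convergence finishes the job. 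That lemma requires its own (short but nontrivial) proof and is an ingredient your plan omits.
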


To prove it, we will need three ingredients. The first is a simple lemma that says that cutting off the logarithm of an $A_\infty$ weight does not increase its $A_\infty$-characteristic. It is valid in any dimension and applies to both $A_\infty(\rn)$ and the local variant $A_\infty(Q).$
\begin{lemma}
\label{cutoff}
Let $\varphi$ be such that $e^\varphi\in A_\infty.$ For $c,d\in\mathbb{R},$ such that $c< d,$ let
\eq[co]{
\varphi_{c,d}=c\chi^{}_{\{\varphi\le c\}}+ \varphi \chi^{}_{\{c<\varphi<d\}}+d\chi^{}_{\{\varphi\ge d\}}.
}
Then for any cube $J,$
$$
\bav{e^{\varphi_{c,d}-\sav{\varphi_{c,d}}J}}J\le \bav{e^{\varphi-\sav{\varphi}J}}J
$$
and, consequently,
\eq[char1]{
\big[e^{\varphi_{c,d}}\big]_{A_\infty}\le \big[e^{\varphi}\big]_{A_\infty}.
}
\end{lemma}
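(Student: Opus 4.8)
The plan is to reduce the two-sided truncation to two successive one-sided truncations --- first from above at level $d$, then from below at level $c$ --- and to treat each one-sided truncation by a monotonicity argument in the cutoff level. Fix a cube $J$ and, after normalizing, assume $|J|=1$. For $t\in\mathbb R$ write $\varphi^{(t)}=\min(\varphi,t)$ and set $A(t)=\log\av{e^{\varphi^{(t)}}}J-\av{\varphi^{(t)}}J$; since $e^\varphi\in A_\infty$, both $e^\varphi$ and $\varphi$ lie in $L^1(J)$, so $A(\infty):=\log\av{e^\varphi}J-\av{\varphi}J$ is finite. Exponentiating, the ``from above'' inequality $\av{e^{\varphi^{(d)}-\sav{\varphi^{(d)}}J}}J\le\av{e^{\varphi-\sav{\varphi}J}}J$ is precisely $A(d)\le A(\infty)$, so it is enough to show that $A$ is non-decreasing and to identify its limit at $+\infty$ with $A(\infty)$.

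To that end I would first record the layer-cake identities, valid for $t_1<t_2\le\infty$,
$$\av{e^{\varphi^{(t_2)}}}J-\av{e^{\varphi^{(t_1)}}}J=\int_{t_1}^{t_2}e^s\rho(s)\,ds,\qquad \av{\varphi^{(t_2)}}J-\av{\varphi^{(t_1)}}J=\int_{t_1}^{t_2}\rho(s)\,ds,$$
where $\rho(s)=|\{t\in J:\varphi(t)>s\}|$; both follow from $e^{\varphi^{(t_2)}}-e^{\varphi^{(t_1)}}=\int_{t_1}^{t_2}e^s\chi_{\{\varphi>s\}}\,ds$ (and its analogue without the exponential) and Fubini, and extend to $t_2=\infty$ by monotone convergence, the right-hand integral there being finite because $\varphi\in L^1(J)$. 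Consequently $B(t):=\av{e^{\varphi^{(t)}}}J$ is positive, non-decreasing, absolutely continuous with $B'(t)=e^t\rho(t)$ a.e., and $B(\infty)=\av{e^\varphi}J<\infty$; moreover $B(t)\le e^t$ simply because $\varphi^{(t)}\le t$. Hence, for any $d$,
$$A(\infty)-A(d)=\big(\log B(\infty)-\log B(d)\big)-\int_d^\infty\rho(s)\,ds=\int_d^\infty\Big(\frac{e^s\rho(s)}{B(s)}-\rho(s)\Big)\,ds\ge0,$$
which disposes of the ``from above'' truncation. For truncation from below, $\varphi\mapsto\max(\varphi,c)$, the mirror computation shows the analogous function of $c$ is non-increasing, the only change being that one uses $\av{e^{\max(\varphi,s)}}J\ge e^s$ (since $\max(\varphi,s)\ge s$) in place of $B(s)\le e^s$; letting $c\to-\infty$ yields $\av{e^{\max(\varphi,c)-\sav{\max(\varphi,c)}J}}J\le\av{e^{\varphi-\sav{\varphi}J}}J$. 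Note that both one-sided statements use only the local integrability of the function and of its exponential.

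Finally, writing $\varphi_{c,d}=\max(\min(\varphi,d),c)$, I would apply the ``from above'' estimate to $\varphi$ to pass to $\psi:=\min(\varphi,d)$; since $e^\psi\le e^d$ is bounded, the ``from above'' estimate itself gives $[e^\psi]_{A_\infty}\le[e^\varphi]_{A_\infty}<\infty$, so $e^\psi\in A_\infty$ and the ``from below'' estimate applies to $\psi$, taking it to $\max(\psi,c)=\varphi_{c,d}$. Chaining the two bounds gives $\av{e^{\varphi_{c,d}-\sav{\varphi_{c,d}}J}}J\le\av{e^{\varphi-\sav{\varphi}J}}J$ for every cube $J$, and taking the supremum over $J$, together with $[w]_{A_\infty}=\sup_J\av{w}J\,e^{-\sav{\log w}J}$, yields~\eqref{char1}. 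The argument is essentially elementary; the only points that deserve attention are the justification that $B$ is absolutely continuous so that $\log B$ may be differentiated (supplied by the layer-cake identities) and the small amount of bookkeeping needed to see that the once-truncated function is again an $A_\infty$ weight, so that the two one-sided steps can be composed --- neither is a genuine obstacle.
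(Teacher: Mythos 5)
Your proof is correct, but it follows a genuinely different route from the paper's. The paper fixes the cube $J$, splits it into the three level sets $J_1=\{\varphi\le c\}$, $J_2=\{c<\varphi<d\}$, $J_3=\{\varphi\ge d\}$, freezes the data $\delta_k=|J_k|/|J|$, $z_k=\langle\varphi\rangle_{J_k}$, $r_k=\langle e^\varphi\rangle_{J_k}$, and regards the truncated average as an explicit elementary function $F(c,d)$ on the rectangle $z_1\le c\le z_2\le\log r_2\le d\le z_3$; computing $F_c\le0$ and $F_d\ge0$ pushes $(c,d)$ to the corner $(z_1,z_3)$, after which Jensen's inequality $r_k\ge e^{z_k}$ on $J_1$ and $J_3$ finishes the proof in one stroke, with no limiting procedures. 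You instead factor the two-sided truncation as $\max(\min(\varphi,d),c)$ and run a continuous monotone flow in each cutoff level, differentiating $t\mapsto\log\langle e^{\min(\varphi,t)}\rangle_J-\langle\min(\varphi,t)\rangle_J$ via the layer-cake identities; the pointwise bounds $\langle e^{\min(\varphi,t)}\rangle_J\le e^t$ and $\langle e^{\max(\varphi,c)}\rangle_J\ge e^c$ play exactly the role that Jensen plays in the paper. Your version is arguably more conceptual --- it isolates the fact that each one-sided truncation separately decreases the per-cube $A_\infty$ defect, which is a reusable statement --- at the cost of some measure-theoretic bookkeeping (absolute continuity of $B$, the limits at $\pm\infty$, and the composition of the two steps), all of which you handle correctly, including the observation that only local integrability of $\psi=\min(\varphi,d)$ and $e^\psi$ is needed to run the second step. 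The paper's argument is shorter and purely algebraic but handles both cutoffs simultaneously and somewhat less transparently.
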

\begin{proof}
Let $J_1=\{t\in J: \varphi\le c\},$ $J_2=\{t\in J: c<\varphi<d\},$ $J_3=\{t\in J: \varphi\ge d\},$ and for $k=1, 2, 3,$ write $\delta_k=|J_k|/|J|.$ Without loss of generality assume $\delta_k>0$ and put $z_k=\av{\varphi}{J_k},$ $r_k=\av{e^\varphi}{J_k};$ note that $r_k\ge e^{z_k}.$ Then $\av{\varphi_{c,d}}J=c\delta_1+z_2\delta_2+d\delta_3$ and
$$
\bav{e^{\varphi_{c,d}-\sav{\varphi_{c,d}}J}}J
=\delta_1 e^{(1-\delta_1)c-\delta_2z_2-\delta_3d}+\delta_2 r_2 e^{-\delta_1c-\delta_2z_2-\delta_3 d}
+\delta_3 e^{-\delta_1c-\delta_2z_2+(1-\delta_3) d}=:F(c,d).
$$
Fix $\delta_k,$ $z_k,$ and $r_k,$ and consider $F$ as a function of $c$ and $d$ on the domain
$z_1\le c \le z_2 \le \log r_2\le d\le z_3.$ Then
\begin{align*}
F_c(c,d)&=e^{-\delta_1c-\delta_2z_2-\delta_3 d}\left[\delta_1(1-\delta_1)e^c-\delta_1\delta_2 r_2-\delta_1\delta_3 e^d\right]\\
&=e^{-\delta_1c-\delta_2z_2-\delta_3 d}[\delta_1\delta_2(e^c-r_2)+\delta_1\delta_3(e^c-e^d)]\le0,
\end{align*}
where we used the identity $1-\delta_1=\delta_2+\delta_3.$
Similarly
$$
F_d(c,d)=e^{-\delta_1c-\delta_2z_2-\delta_3 d}[\delta_1\delta_3(e^d-e^c)+\delta_2\delta_3(e^d-r_2)]\ge0.
$$
Therefore,
$$
\bav{e^{\varphi_{c,d}-\sav{\varphi_{c,d}}J}}J= F(c,d)\le F(z_1,z_3)=e^{-\sav{\varphi}J}[\delta_1e^{z_1}+\delta_2 r_2+\delta_3 e^{z_2}]\le \bav{e^{\varphi-\sav{\varphi}J}}J.
$$
Inequality~\eqref{char1} for the $A_\infty$-characteristics is immediate.
\end{proof}
The second ingredient is a lemma due to Vasyunin that can be found in~\cite{v1}.
\begin{lemma}[\cite{v1}]
\label{split}
Fix $C\ge1$ and take any $C_1>C.$ Then for every interval $Q$ and every $\varphi$ such that $e^\varphi\in \AC,$ there exists a splitting $Q=Q_-\cup Q_+$ such that the whole line segment with the endpoints $(\av{\varphi}{Q_\pm},\av{e^\varphi}{Q_\pm})$ lies inside $\Omega_{C_1}.$ Moreover, the ratio $|Q_+|/|Q|$ can be chosen to be uniformly \textup(with respect to $\varphi$ and $Q$\textup) separated from $0$ and $1.$ 
\end{lemma}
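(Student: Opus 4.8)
The plan is as follows. Normalize $Q=(0,1)$ and write $x_Q=(\av{\varphi}{Q},\av{e^\varphi}{Q})$. For a cut point $t\in(0,1)$ put $Q_-(t)=(0,t)$ and $Q_+(t)=(t,1)$, with associated planar points $a(t)=(\av{\varphi}{Q_-(t)},\av{e^\varphi}{Q_-(t)})$ and $b(t)=(\av{\varphi}{Q_+(t)},\av{e^\varphi}{Q_+(t)})$. Since $[e^\varphi]_{A_\infty(J)}\le[e^\varphi]_{A_\infty(Q)}\le C$ for every subinterval $J\subseteq Q$, and $\av{e^\varphi}J\ge e^{\sav{\varphi}J}$ by Jensen's inequality, both $a(t)$ and $b(t)$ lie in $\overline{\Omega_C}\subset\Omega_{C_1}$. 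First I would record three elementary facts: $a$ and $b$ are continuous on $(0,1)$; $x_Q=t\,a(t)+(1-t)\,b(t)$, so $x_Q$ lies on the chord $[a(t),b(t)]$ for every $t$; and $b(t)\to x_Q$ as $t\to0^+$, while $a(t)\to x_Q$ as $t\to1^-$. Since the epigraph $\{x_2\ge e^{x_1}\}$ is convex, every such chord stays above $\Gamma_1$; and since $\{x_2\ge C_1e^{x_1}\}$ is convex, the only way the chord $[a(t),b(t)]$ can leave $\Omega_{C_1}$ is by rising above $\Gamma_{C_1}$ along a single connected sub-arc. Thus the goal is to produce a cut point $t$, separated from $0$ and $1$ uniformly in $\varphi$ and $Q$, at which the chord stays below $\Gamma_{C_1}$.

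The key a priori input is that $[e^\varphi]_{A_\infty(Q)}\le C$ sharply limits the oscillation of the averages $\av{\varphi}{Q_\pm(t)}$. Setting $\Phi(t)=\int_0^t(\varphi-\av{\varphi}{Q})$, one has $\av{\varphi}{Q_-(t)}-\av{\varphi}{Q}=\Phi(t)/t$ and $\av{\varphi}{Q_+(t)}-\av{\varphi}{Q}=-\Phi(t)/(1-t)$. From $C\ge\av{e^\varphi}{Q}e^{-\sav{\varphi}{Q}}\ge t\,\av{e^\varphi}{Q_-(t)}e^{-\sav{\varphi}{Q}}\ge t\,e^{\sav{\varphi}{Q_-(t)}-\sav{\varphi}{Q}}=t\,e^{\Phi(t)/t}$ one gets $\Phi(t)\le t\log(C/t)$, and symmetrically, using $Q_+(t)$, $\Phi(t)\ge-(1-t)\log(C/(1-t))$. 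Also $x_Q$ lies strictly below $\Gamma_{C_1}$ (indeed on or below $\Gamma_C$). Consequently, on any slab $t\in[\theta,1-\theta]$ the whole chord $[a(t),b(t)]$ is trapped in a bounded subregion of $\Omega_C$ whose size is controlled by $C$ and $\theta$ alone.

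The hard part is turning this qualitative trapping into the quantitative conclusion, and this is where I expect the main obstacle. Here is the strategy. If the bisection $t=\tfrac12$ already yields a chord in $\Omega_{C_1}$, we are done with ratio $\tfrac12$. Otherwise the chord $[a(\tfrac12),b(\tfrac12)]$ pokes above $\Gamma_{C_1}$; since $x_Q$ lies on the chord and strictly below $\Gamma_{C_1}$, the offending sub-arc joins $x_Q$ to exactly one of the endpoints -- say to $a(\tfrac12)$, so the interval $Q_-$ is ``to blame''. One then replaces that interval by a suitable half of itself and repeats. Using $\Phi(t)\le t\log(C/t)$ together with the fact that $x_Q$ stays on every candidate chord, I would show that at each such step the height of the bulge above $\Gamma_{C_1}$ drops by an amount bounded below purely in terms of $C$ and $C_1$; hence after a number of steps $N=N(C,C_1)$ the bulge is gone, producing a cut ratio of order $2^{-N(C,C_1)}$ that is separated from $0$ and $1$ uniformly in $\varphi$ and $Q$. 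The delicate point is precisely this per-step gain: when $C_1$ is only marginally larger than $C$ the margin is thin, so one cannot simply argue that the chord becomes ``nearly vertical''; one must track both coordinates of $a(t)$ and $b(t)$ at once and exploit the fact that $x_Q$, which lies strictly inside $\Omega_C$, sits on every chord under consideration. A complete proof is carried out in~\cite{v1}.
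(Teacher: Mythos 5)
First, a structural point: the paper does not prove this lemma at all --- it is imported verbatim from Vasyunin's paper \cite{v1} --- so there is no internal proof to compare yours against. More importantly, your write-up is also not a proof: it sets the problem up correctly and then defers exactly the decisive step back to \cite{v1}. The preparatory material is sound --- the continuity of $t\mapsto a(t),b(t)$, the fact that $x_Q$ lies on every chord, the observation that a chord with endpoints in $\overline{\Omega_C}$ can only leave $\Omega_{C_1}$ through the convex region above $\Gamma_{C_1}$ along a single connected subarc lying on one side of $x_Q$, and the bounds $-(1-t)\log\frac{C}{1-t}\le\Phi(t)\le t\log\frac{C}{t}$ are all correct and are the right a priori inputs. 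But these only yield the qualitative statement (for each fixed $\varphi$ a good cut exists, since the subsegment $[x_Q,b(t)]$ shrinks to the interior point $x_Q$ as $t\to0^+$, etc.). The entire content of the lemma is the \emph{uniform} lower bound on the cut ratio, and that is precisely what you do not establish.

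The specific mechanism you propose for uniformity --- bisect the ``offending'' half and claim the height of the bulge above $\Gamma_{C_1}$ drops by a fixed amount depending only on $C$ and $C_1$ at each step --- is asserted, not argued, and I do not see why it should hold. Moving the cut point from $\tfrac12$ to $\tfrac14$ changes \emph{both} endpoints of the chord, not just the one belonging to the blamed interval, and there is no monotonicity of the excursion above $\Gamma_{C_1}$ under this operation: the bulge can grow, migrate to the other side of $x_Q$, or shrink by an amount that degenerates as the chord gets long. Any quantitative argument here must rest on a geometric lemma of the following type: a segment with both endpoints in $\overline{\Omega_C}$ that rises above $\Gamma_{C_1}$ has horizontal extent at least $\delta(C,C_1)>0$ (this follows from the concavity of $x_1\mapsto\log(mx_1+k)-x_1$ along the chord). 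Even granting that, coupling $\delta$ to a bound on the cut ratio is delicate --- note that your estimate $|\Phi(t)|/t\le\log(C/t)$ does \emph{not} tend to $0$ as $t\to1^-$, so a large horizontal separation between $a(t)$ and $x_Q$ does not by itself force $t$ away from $1$. This coupling, done in \cite{v1} via a continuity/connectedness argument in the splitting parameter rather than an iterated bisection, is the heart of the lemma; as it stands your proposal identifies the obstacle but does not overcome it.
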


\subsection{Local convexity and concavity}
The third ingredient in the proof of Lemma~\ref{lemma_induction} is the verification of the local convexity/concavity of our candidates. It requires a certain amount of calculation much of which is routine. The presentation in this part parallels the one in Section~6 of~\cite{sv1}. First, we state a formal definition.
\begin{definition}
A function $G$ is called locally convex on $\Omega_C$ if
\eq[convex]{
G(\alpha  x^-+(1-\alpha)x^+)\le \alpha G(x^-)+(1-\alpha)G(x^+)
}
for all $\alpha\in [0,1]$ and all $x^-, x^+\in \Omega_C$ such that the line segment $[x^-,x^+]$ lies entirely in $\Omega_C.$
Similarly, $G$ is called locally concave on $\Omega_C$ if
\eq[concave]{
G(\alpha  x^-+(1-\alpha)x^+)\ge \alpha G(x^-)+(1-\alpha)G(x^+)
}
for all such $\alpha$ and $x^\pm.$
\end{definition}

Recall the functions defined by various formulas of Section~\ref{bellman}: $u^\pm$ by~\eqref{upm}, $v$ by~\eqref{v},~$b_{p,C}$ by~\eqref{bp}, $b_{2,C}$ by~\eqref{b2}, $B_{2,C}$ by~\eqref{B2}, $b_{1,C}$ by~\eqref{b1domain} and~\eqref{b1}, $A_{\delta,C}$ by~\eqref{Ad}, and $D_{\lambda,C}$ by~\eqref{wfd}, and~\eqref{wf}. As explained in that section, we need to show that $b_{p,C}$ is locally convex for all $p\in[1,2]$ and that $B_{2,C},$ $A_{\delta,C},$ and $D_{\lambda,C}$ are all locally concave.

Before stating a formal result, let us outline what this entails. All our candidates have similar structure: for each candidate $G,$ the domain $\Omega_C$ is the union of one or more subdomains, $\Omega_C=\cup S_k,$ such that $G$ is twice-differentiable and satisfies the homogeneous \ma equation $G_{x_1x_1}G_{x_2x_2}=G_{x_1x_2}^2$ in the interior of each $S_k.$ For such a $G$ showing local convexity (concavity) within each particular $S_k$ amounts to showing that either $G$ is affine in $S_k$ or that $G_{x_2x_2}>0$ ($<0$) in the interior of $S_k$ and $G$ is continuous along every straight-line segment crossing the boundary of $S_k.$ 

To show~\eqref{convex} (respectively, \eqref{concave}) for segments $[x^-,x^+]$ that cross the boundaries between subdomains, we simply need to make sure that $G_{x_2}$ is increasing (respectively, decreasing) across these boundaries. This is because each such boundary is always a non-vertical straight-line segment such that the gradient $\nabla G$ is constant on each side of it (though the two constant vectors are not necessarily the same). As explained in~\cite{sv1}, in this situation it suffices to check the jump in the directional derivative of $G$ in any direction transversal to the boundary, and it is convenient to choose the $x_2$-direction.

We will often encounter the situation when a candidate $G$ has the form
\eq[slope]{
G(x)=m(u)(x-u)+f(u),
}
where $u$ stands for either $u^+(x)$ or $u^-(x)$ and $m$ satisfies the equation 
$$
\xi m'(u)=m(u)-f'(u),
$$
for some differentiable function $f.$ (Here $\xi=\xi^+$ when $u=u^+$ and $\xi=\xi^-$ when $u=u^-.$) Such a function automatically satisfies the \ma equation. Indeed, we compute
$$
u_{x_1}=-\frac1{x_1-u-\xi},\qquad u_{x_2}=\frac{e^{-u}(1-\xi)}{x_1-u-\xi},
$$
\eq[gder]{
G_{x_1}=m-m',\qquad G_{x_2}=m'e^{-u}(1-\xi),
}
and
$$
G_{x_1x_1}\!\!=(m'-m'')u_{x_1},\quad G_{x_2x_2}\!\!=(m''-m')e^{-u}(1-\xi)u_{x_2},\quad G_{x_1x_2}\!\!=(m''-m')e^{-u}(1-\xi)u_{x_1}.
$$
Therefore, 
$
G_{x_1x_1}G_{x_2x_2}=G_{x_1x_2}^2.
$
Moreover, since $u^+_{x_2}<0$ and $u^-_{x_2}>0,$ we have $\sign G_{x_2x_2}=\sign(m'-m'')$ if $u=u^+,$ and $\sign G_{x_2x_2}=\sign(m''-m')$ if $u=u^-.$

\begin{lemma}
\label{concon}
Let $C\ge 1.$
\ben[leftmargin=*]
\item[\bf (1)]
If $p\in(1,2]$ and $C\ge \frac{e^{p-2}}{p-1},$ then $b_{p,C}$ is locally convex in $\Omega_C.$
\item[\bf (2)]
$b_{1,C}$  is locally convex in $\Omega_C.$
\item[\bf (3)]
$B_{2,C}$ is locally concave in $\Omega_C.$
\item[\bf (4)]
For $1\le\delta<1/\xi^+(C),$ $A_{\delta,C}$ is locally concave in $\Omega_C.$
\item[\bf (5)]
For any $\lambda\in\mathbb{R},$ $D_{\lambda,C}$ is locally concave in $\Omega_C.$
\een
\end{lemma}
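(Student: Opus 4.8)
The plan is to verify Lemma~\ref{concon} candidate by candidate, using the common framework already laid out in the discussion preceding the lemma. For each candidate $G$ and each subdomain $S_k$ on which $G$ is smooth, two things must be checked: (i) the sign of $G_{x_2x_2}$ inside $S_k$ is correct (nonnegative for the convex candidates $b_{p,C}$, $b_{1,C}$; nonpositive for the concave candidates $B_{2,C}$, $A_{\delta,C}$, $D_{\lambda,C}$), and (ii) across each non-vertical boundary segment between adjacent subdomains, the jump in $G_{x_2}$ has the correct sign (increasing across the boundary for convex $G$, decreasing for concave $G$). Since the \ma equation holds on each $S_k$ by construction, (i) and (ii) together give local convexity/concavity by the standard gluing argument from~\cite{sv1}.

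For the pieces of the form \eqref{slope}, i.e.\ $G(x)=m(u)(x_1-u)+f(u)$ with $\xi m'=m-f'$, the computations \eqref{gder} reduce everything to signs: $\sign G_{x_2x_2}=\sign(m'-m'')$ when $u=u^+$ and $=\sign(m''-m')$ when $u=u^-$. So I would first identify, for each candidate, the relevant $m$:
\begin{itemize}
\item[$\bullet$] For $b_{p,C}$ on the region where \eqref{bp} is the $u^+$-formula, $m(u)=\frac p{\xi^+}e^{u/\xi^+}\int_u^\infty|s|^{p-1}\sgn(s)e^{-s/\xi^+}\,ds$, and one checks $m'-m''\ge0$; this is where the hypothesis $C\ge\frac{e^{p-2}}{p-1}$ enters, guaranteeing the requisite sign (equivalently, controlling where $u^+$ ranges relative to where the integrand changes sign). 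For $b_{2,C}$ from \eqref{b2}, $m(u)=2u+2\xi^+$ is affine, so $G$ is affine on its $u^+$-piece and convexity there is automatic; likewise $B_{2,C}$ from \eqref{B2} is affine on its $u^-$-piece.
\item[$\bullet$] For $A_{\delta,C}$ from \eqref{Ad}, $m(u)=\frac{\delta}{1-\delta\xi^+}e^{\delta u}$, and $m'-m''=\delta^2 e^{\delta u}\frac{1-\delta}{1-\delta\xi^+}$, which is $\le0$ precisely when $\delta\ge1$ and $\delta\xi^+<1$ — exactly the hypothesis — giving $G_{x_2x_2}\le0$.
\item[$\bullet$] For $b_{1,C}$ from \eqref{b1}, the three pieces are $-x_1$, an affine function on $\Omega_0$, and $x_1$: all affine, so convexity reduces entirely to the boundary-jump check (ii) across $\ell^\pm(0,1)$.
\item[$\bullet$] For $D_{\lambda,C}$ from \eqref{wf}, three of the four pieces are either constant ($\Omega_4$), affine in $(x_1,x_2)$ ($\Omega_3$), or affine along the relevant lines ($\Omega_2$, using the parametrization by $v$ from \eqref{v}); the $\Omega_1$ piece is of the $u^+$-type with $m(u)=\frac{1}{\xi^+-\xi^-}e^{(\xi^+-\xi^-+u-\lambda)/\xi^+}$, for which $m'-m''=\frac{1}{\xi^+-\xi^-}e^{(\cdots)/\xi^+}\frac{\xi^+-1}{(\xi^+)^2}\le0$ since $\xi^+<1$; but here we want $G_{x_2x_2}\le 0$ for concavity, and indeed on the $u^+$-piece $\sign G_{x_2x_2}=\sign(m'-m'')\le0$, as needed. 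I should also record (this is used in the proof of Theorem~\ref{t8}) that $D_{\lambda,C}(0,x_2)$ is increasing in $x_2$ on the $\Omega_1\cup\Omega_2$ portion and decreasing on the $\Omega_3$ portion, which comes from differentiating the explicit formulas in \eqref{wf}.
\end{itemize}

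For step (ii), along each interface — which is always a line segment lying on one of the tangent lines $\ell^\pm$ to $\Gamma_C$ or $\Gamma_1$, or (for $D_{\lambda,C}$) a line through $(\lambda,e^\lambda)$ — the gradient $\nabla G$ is constant on each side, so it suffices to compare the two values of $G_{x_2}$ (using \eqref{gder}, $G_{x_2}=m'(u)e^{-u}(1-\xi)$) and confirm the jump has the right sign; continuity of $G$ across the interface is either built into the candidate's definition or a one-line check. The genuinely delicate point, and the main obstacle, is part (1): establishing $m'-m''\ge0$ for the integral $m$ attached to $b_{p,C}$ over the full range of $u^+(x)$, $x\in\Omega_C$, under exactly the hypothesis $C\ge\frac{e^{p-2}}{p-1}$. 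This amounts to a one-variable inequality for the function $u\mapsto e^{u/\xi^+}\int_u^\infty|s|^{p-1}\sgn(s)e^{-s/\xi^+}\,ds$ and its derivatives; differentiating twice and clearing the exponential reduces it to showing a certain combination of $|u|^{p-1}$, $|u|^{p-2}$, and the tail integral is signed, and one must track how the sign changes as $u$ passes through $0$ (where $|s|^{p-1}\sgn s$ changes sign) and verify that the threshold on $C$ is exactly what keeps $u^+$ in the good range. Everything else in the lemma is bookkeeping of signs of exponentials and affine functions, which I would organize piece by piece but not grind through in detail here.
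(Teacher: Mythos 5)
Your framework is exactly the paper's: reduce local convexity/concavity on each smooth piece to the sign of $m'-m''$ via \eqref{gder}, and handle segments crossing interfaces by checking the jump in $G_{x_2}$. But as written the proposal has a genuine gap at precisely the point you yourself identify as ``the main obstacle'': part (1) is never proved. The paper's argument there is not mere bookkeeping — it computes $\xi^2(m'-m'')e^{-u/\xi}=\xi f''(u)e^{-u/\xi}-(1-\xi)\int_u^\infty e^{-s/\xi}f''(s)\,ds$, reduces to the sign of $F(\mu)=|\mu|^{p-2}e^{-\mu}-(1-\xi)\int_\mu^\infty e^{-t}|t|^{p-2}\,dt$ with $\mu=u/\xi^+$, locates the minimum of $F$ on $(-\infty,0)$ at $\mu=(p-2)/\xi^+$, and uses $C\ge\frac{e^{p-2}}{p-1}\Leftrightarrow\xi^+\ge 2-p$ to show $F\ge0$ there, ultimately via the inequality $e-(p-1)\int_{-1}^\infty e^{-t}|t|^{p-2}\,dt\ge0$. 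Note also that your gloss on the hypothesis is off: the condition on $C$ does not restrict ``where $u^+$ ranges'' ($u^+(x)$ sweeps all of $\mathbb{R}$ as $x$ ranges over $\Omega_C$); it enters only through the value of $\xi^+$ in $F$, and the inequality must hold for every $u$.

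Two of your side claims are also wrong, though harmlessly so. Neither $b_{2,C}$ nor $B_{2,C}$ is affine on $\Omega_C$: with $m(u)=2(u+\xi^\pm)$ one has $m'-m''=2\ne0$, so these candidates are affine only along the characteristics and strictly convex (resp.\ concave) transversally — which is exactly what the sign test delivers, so just apply it rather than claim affineness (an affine $B_{2,C}$ would force the upper and lower Bellman functions to coincide). Similarly, on $\Omega_2(\lambda)$ the fact that $D$ is affine along lines through $(\lambda,e^\lambda)$ does not by itself give local concavity; one must still verify $D_{x_2x_2}\le0$, which the paper does by computing the Hessian and checking $r=x_2-e^\lambda-e^v(x_1-\lambda)<0$. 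Finally, the interface checks for $D_{\lambda,C}$ (the jump of $D_{x_2}$ from $\Omega_2$ into $\Omega_1$ and into $\Omega_3$) require evaluating $D_{x_2}=-1/(e^v(1-v+\lambda)-e^\lambda)$ on the common boundary $v=\lambda+\xi^--\xi^+$ and comparing with \eqref{gder}; these are short but should be carried out, not only announced.
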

\begin{proof} As applicable below, write $b$ for $b_{p,C},$ $B$ for 
$B_{2,C},$ $A$ for $A_{\delta,C},$ and $D$ for $D_{\lambda,C}.$
%\ben[leftmargin=*]

%\item
\noindent {\bf (1)}
In this part, write $u$ for $u^+$ and $\xi$ for $\xi^+.$ Clearly, $b$ is continuous on $\Omega_C.$ Moreover, $b$ is of the form~\eqref{slope} with $f(u)=|u|^p$ and $m(u)=\frac1\xi\int_u^\infty f(s)e^{-s/\xi}ds.$ Therefore, $b$ is differentiable in the interior of $\Omega_C$ and twice-differentiable in the interior of each of the two subdomains of $\Omega_C$ separated by the tangent $\ell^+{(0,1)}.$ (Note that $u(x)=0$ if an only if $x\in \ell^+{(0,1)}.$) Since there is no jump in $b_{x_2}$ across this line, we only need to verify that $b_{x_2x_2}>0$ in each (open) subdomain. 

We have $\sign b_{x_2x_2}=\sign(m'-m'').$ Assuming $u\ne0,$ we compute
$$
\xi^2(m'(u)-m''(u))e^{-u/\xi}=\xi f''(u)e^{-u/\xi}-(1-\xi)\int_u^\infty e^{-s/\xi}f''(s)\,ds.
$$
Using $f''(s)=p(p-1)|s|^{p-2}$ and the variable in the integral, we see that $\sign b_{x_2x_2}=\sign F(u/\xi),$ where
$$
F(\mu)=|\mu|^{p-2}e^{-\mu}-(1-\xi)\int_\mu^\infty e^{-t}|t|^{p-2}\,dt.
$$
If $p=2,$ $F(\mu)=\xi e^{-\mu}>0$ and so $b_{x_2x_2}>0$ without any conditions on $C.$  
Assume $p\in (1,2)$ and $C\ge \frac{e^{p-2}}{p-1}=\frac{e^{-(2-p)}}{1-(2-p)};$ then $\xi\ge 2-p.$ 
Differentiation shows that $F$ is decreasing on the interval $(0,\infty)$ and that its minimum on the interval $(-\infty,0)$ is at the point $\mu=(p-2)/\xi.$  Since $F(\mu)\to0$ as $\mu\to\infty,$ the proof will be complete if we verify that $F(\frac{p-2}\xi)\ge0.$ Clearly, $F(\frac{p-2}\xi)$ is increasing in $\xi,$ hence,
$$
F({\textstyle \frac{p-2}\xi})= |{\textstyle \frac{p-2}\xi}|^{p-2}e^{-(p-2)/\xi}-(1-\xi)\int_{(p-2)/\xi}^\infty e^{-t}|t|^{p-2}\,dt\ge e-(p-1)\int_{-1}^\infty e^{-t}|t|^{p-2}\,dt.
$$
Finally, it is easy to check that the last expression is decreasing in $p$ on the interval $p\in(1,2]$ and equals $0$ when $p=2.$ Therefore, $b_{x_2x_2}>0,$ and the proof is complete.
\medskip

%\item
\noindent {\bf (2)}
Since $b$ is an affine (and thus locally convex) function in each of the regions, $\Omega_-,$ $\Omega_0,$ and $\Omega_0,$ we only need to verify that 
$b_{x_2}$ is increasing across the tangent lines $\ell^\pm{(0,1)}$ separating these regions. 
This is elementary:
in $\Omega_\pm,$ $b_{x_2}=0,$ while in $\Omega_0,$ $b_{x_2}=2\,\frac{(1-\xi^-)(1-\xi^+)}{\xi^+-\xi^-}\ge0.$
\medskip

%\item
\noindent {\bf (3)}
The function $B$ is of the form~\eqref{slope} with $m(u)=2(u+\xi),$ $f(u)=u^2,$ $u=u^-,$ and $\xi=\xi^-.$ Thus, it suffices to observe that $\sign B_{x_2x_2}=\sign(m''-m')=\sign(-2)<0.$
\medskip

%\item
\noindent {\bf (4)}
The function $A$ is of the form~\eqref{slope} with $m(u)=\delta e^{\delta u}/(1-\delta \xi),$ $f(u)=e^{\delta u},$ $u=u^+,$ and $\xi=\xi^+.$ Furthermore, we have $\sign A_{x_2x_2}=\sign(m'-m'')=\sign(1-\delta)<0.$ 
\medskip

%\item
\noindent {\bf (5)}
First, observe that $D$ is locally concave in each subdomain. This is trivially true in $\Omega_3(\lambda)$ where $D$ is affine, and in $\Omega_4(\lambda),$ where $D$ is constant. In $\Omega_2(\lambda),$ we compute
$$
D_{x_1x_1}=\frac{e^v}{r^3}\,(x_2-e^\lambda)^2,\quad D_{x_2x_2}=\frac{e^v}{r^3}\,(x_1-\lambda)^2,\quad 
D_{x_1x_1}=-\frac{e^v}{r^3}\,(x_2-e^\lambda)(x_1-\lambda), 
$$
where $r=x_2-e^\lambda-e^v(x_1-\lambda).$ Thus, $D$ satisfies the \ma equation in the interior of $\Omega_2(\lambda).$
It is easy to show that $r<0,$ which means that $D$ is locally concave in $\Omega_2(\lambda).$ (Note that while $D$ has a discontinuity at the point $(\lambda, e^\lambda),$ it is continuous along every straight-line segment contained in $\Omega_C$ and passing through that point.)

In $\Omega_1(\lambda),$ $D$ is of the form~\eqref{slope} with $m(u)=e^{(\xi^+-\xi^--\lambda+u)/\xi^+}/(\xi^+-\xi^-),$ $f(u)=0,$ and $u=u^+.$ Then $\sign D_{x_2x_2}=\sign(m'-m'')=\sgn(\xi^+-1)<0.$

It remains to check the jump in $D_{x_2}$ across the boundaries between subdomains. In $\Omega_4(\lambda),$ $D_{x_2}=0,$ and in $\Omega_3(\lambda),$ $D_{x_2}=-e^{-\lambda}(1-\xi^-)(1-\xi^+)/(\xi^+-\xi^-)^2<0.$ Thus, $D_{x_2}$ is decreasing across the bounding tangent $\ell^+{(\lambda,e^\lambda)}.$ 

In $\Omega_2(\lambda),$ we compute $D_{x_2}=-1/(e^v(1-v+\lambda)-e^\lambda).$ On the the boundary separating $\Omega_2(\lambda)$ from both $\Omega_3(\lambda)$ and $\Omega_1(\lambda),$ we have $v(x)=\lambda+\xi^--\xi^+$ (see Figure~\ref{f3}), thus, $D_{x_2}=e^{-\lambda-\xi^-+\xi^+}/(e^{-\xi^-+\xi^+}-1+\xi^--\xi^+)$ on this line. Since
$
e^{-\xi^-+\xi^+}=\frac{1-\xi^-}{1-\xi^+},
$
 $D_{x_2}=e^{-\lambda-\xi^-+\xi^+}(1-\xi^+)/(\xi^+(\xi^+-\xi^-))>0.$
Thus $D_{x_2}$ is decreasing from $\Omega_2(\lambda)$ into $\Omega_3(\lambda).$

Lastly, in $\Omega_1(\lambda),$ $D_{x_2}$ is given by~\eqref{gder}: 
$$
D_{x_2}=m'(u^+)e^{-u^+}(1-\xi^+)=\frac{e^{(\xi^+-\xi^--\lambda+u^+)/\xi^+}}{\xi^+(\xi^+-\xi^-)}e^{-u^+}(1-\xi^+).
$$
On the tangent $\ell^+{(\lambda+\xi^--\xi^+,e^{\lambda+\xi^--\xi^+})},$ we have $u^+=\lambda+\xi^--\xi^+.$ Hence, on this line,
$
D_{x_2}=e^{-\lambda+\xi^+-\xi^-}(1-\xi^+)/(\xi^+(\xi^+-\xi^-)),
$
which means that the jump in $D_{x_2}$ from $\Omega_2(\lambda)$ into $\Omega_1(\lambda)$ is $0.$ This completes the proof.
%\een
%\vspace{-.5cm}
\end{proof}

\subsection{The induction}
We are now ready to prove Lemma~\ref{lemma_induction}.
\begin{proof}[Proof of Lemma~\ref{lemma_induction}]
We only prove statement (1), as the proofs of the other statements are fully analogous. 

Fix an interval $Q$ and take $\varphi\in L^\infty(Q)$ such that $e^\varphi\in\AC.$ Take any $C_1>C.$ We first build a special collection of subintervals of $Q.$ Let $D_0(Q)=\{Q\}.$ Let $Q=Q_-\cup Q_+$ be the splitting provided by Lemma~\ref{split} and write $D_1(Q)=\{Q_-,Q_+\}.$ Note that $e^\varphi\in A_\infty^C(Q_\pm)$ so the same lemma can be applied separately to $Q_-$ and $Q_+,$ which defines $D_1(Q_-)$ and $D_1(Q_+).$ For $n\ge 2,$ let $D_n(Q)=D_{n-1}(Q_-)\cup D_{n-1}(Q_+)$ and $D(Q)=\cup_{n\ge 1}D_n(Q).$ Observe that $\max_{J\in D_n(Q)}|J|\to 0$ as $n\to\infty.$

For all $J\in D(Q),$ let $x^J=(\av{\varphi}J,\av{e^\varphi}J)$ and consider the following sequence of step functions taking values in 
$\Omega_C:$
$$
x^{(n)}(t)=\sum_{J\in D_n(Q)} x^J\chi^{}_{J}(t).
$$
By the Lebesgue differentiation theorem, $x_n\to (\varphi,e^\varphi)$ {\it a.e.} on $Q.$ 

By Lemma~\ref{split}, for any $J\in D(Q)$ the line segment connecting the points $x^{J_-}$ and $x^{J_+}$ lies entirely in $\Omega_{C_1}.$ By Lemma~\ref{concon}, $b_{p,C_1}$ is locally convex in $\Omega_{C_1},$ i.e., it satisfies~\eqref{convex} with $C$ replaced with $C_1.$ Using these two facts repeatedly, we obtain
\begin{align*}
b_{p,C_1}(\av{\varphi}Q,\av{e^\varphi}Q)&=b_{p,C_1}(x^Q)\le \frac{|Q_-|}{|Q|}b_{p,C_1}(x^{Q_-})+\frac{|Q_+|}{|Q|}b_{p,C_1}(x^{Q_+})\\
&\le \frac1{|Q|}\,\sum_{J\in D_n(Q)} |J| b_{p,C_1}(x^J)=\frac1{|Q|}\,\int_Q b_{p,C_1}(x^{(n)}(t))\,dt.
\end{align*}
Note that $\{x^{(n)}\}$ is a uniformly bounded sequence and $b_{p,C_1}$ is continuous in $\Omega_{C}.$ Therefore, the dominated convergence theorem applies, and, since $b_{p,C_1}(s,e^s)=|s|^p$ for any $s,$ we have 
\eq[bound]{
b_{p,C_1}(\av{\varphi}Q,\av{e^\varphi}Q)\le \frac1{|Q|}\,\int_Q b_{p,C_1}(\varphi(t),e^{\varphi(t)})\,dt=\av{|\varphi|^p}Q.
}
Now take any, not necessarily bounded $\varphi\in E_{x,C,Q}.$ For $c,d\in\mathbb{R}$ such that $c<d,$ let $\varphi_{c,d}$ be defined by~\eqref{co}. Then $\varphi_{c,d}\in L^\infty(Q),$ and by Lemma~\ref{cutoff}, $e^{\varphi_{c,d}}\in \AC;$ thus,~\eqref{bound} applies:
$$
b_{p,C_1}(\av{\varphi_{c,d}}Q,\av{e^{\varphi_{c,d}}}Q)\le \av{|\varphi_{c,d}|^p}Q.
$$
Taking the limit first as $c\to-\infty$ and then as $d\to\infty,$ and using the monotone convergence theorem, we obtain
$$
b_{p,C_1}(x)\le \av{|\varphi|^p}Q.
$$
Taking infimum over all $\varphi\in E_{x,C,Q}$ now gives 
$$
b_{p,C_1}(x)\le \bel{b}_{p,C}(x),
$$
and it remains only to observe that $b_{p,C_1}(x)$ is continuous in $C_1$ for each $x\in\Omega_C$ and take limit as $C_1\to C.$
\end{proof}

\section{Optimizers and converse inequalities}
\label{optimizers}
The main result of this section is the following converse to Lemma~\ref{lemma_induction}, which will conclude the proof of Theorem~\ref{all_bellman}. Note that unlike in Lemma~\ref{lemma_induction}, there is no restriction on $C$ in the first statement.

\begin{lemma}
\label{main_opt0}
Let $C\ge1.$
\ben[leftmargin=*]
\item[\bf (1)]
If $p\in[1,2],$ then 
$
b_{p,C}\le \bel{b}_{p,C}.
$
\item[\bf (2)]
$
B_{2,C}\ge \bel{B}_{2,C}
$
\item[\bf (3)]
For $1\le \delta<1/\xi^+(C),$
$
A_{\delta,C}\ge \bel{A}_{\delta,C}.
$
\item[\bf (4)]
For any $\lambda\in\mathbb{R},$
$
D_{\lambda,C}\ge \bel{D}_{\lambda,C}.
$
\een
\end{lemma}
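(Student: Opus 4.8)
The plan is to establish Lemma~\ref{main_opt0} by producing, for every $C\ge1$ and every $x\in\Omega_C$, an explicit admissible function $\varphi_x\in E_{x,C,Q}$ with $Q=(0,1)$ that \emph{realizes} the candidate: $\av{|\varphi_x|^p}Q=b_{p,C}(x)$ in case (1), $\av{\varphi_x^2}Q=B_{2,C}(x)$ in case (2), $\av{e^{\delta\varphi_x}}Q=A_{\delta,C}(x)$ in case (3), and $\frac1{|Q|}|\{t:\varphi_x(t)\ge\lambda\}|=D_{\lambda,C}(x)$ in case (4). Since $\bel{b}_{p,C}$ is an infimum and the other three Bellman functions are suprema over $E_{x,C,Q}$, exhibiting such $\varphi_x$ yields $\bel{b}_{p,C}\le b_{p,C}$, $\bel{B}_{2,C}\ge B_{2,C}$, $\bel{A}_{\delta,C}\ge A_{\delta,C}$ and $\bel{D}_{\lambda,C}\ge D_{\lambda,C}$, that is, for each function the bound complementary to the one proved in Lemma~\ref{lemma_induction}; it also settles the nonemptiness of $E_{x,C,Q}$ claimed in Section~\ref{main_results}. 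Together with Lemma~\ref{lemma_induction} this completes Theorem~\ref{all_bellman}. (The case $C=1$ is trivial, since then $\Omega_C=\Gamma_1$ and the only admissible function at $(s,e^s)$ is $\varphi\equiv s$; so assume $C>1$.)

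All optimizers are assembled from two building blocks, following the \ma characteristics of the candidate. A constant $\varphi\equiv s$ realizes the point $(s,e^s)\in\Gamma_1$, and there the boundary conditions \eqref{bcond} hold automatically. A shifted logarithm $\psi(t)=c+\xi\log(1/t)$ with $\xi\in\{\xi^+,\xi^-\}$ realizes, by a direct computation as in Lemma~\ref{phi0}, the point $\big(c+\xi,\ e^{c}/(1-\xi)\big)$, which lies on $\Gamma_C$ by the defining identity $e^{-\xi^\pm}=C(1-\xi^\pm)$, and has $[e^{\psi}]_{A_\infty(0,1)}=e^{-\xi}/(1-\xi)=C$ by \eqref{char}. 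The basic glued optimizer is
\[
\varphi_x(t)=\begin{cases} u(x)+\xi\log(a/t),& 0<t<a,\\ u(x),& a<t<1,\end{cases}\qquad a=\frac{x_1-u(x)}{\xi}.
\]
Here $(u,\xi)=(u^+(x),\xi^+)$ for the candidates built from $u^+$ and $(u,\xi)=(u^-(x),\xi^-)$ for those built from $u^-$. From \eqref{upm} one gets $(\av{\varphi_x}Q,\av{e^{\varphi_x}}Q)=x$, and $a\in[0,1]$ for $x$ on the relevant characteristic; since the constant tail is exactly a one-sided cutoff, in the sense of \eqref{co}, of the pure logarithm obtained by extending the first branch to all of $(0,1)$, Lemma~\ref{cutoff} together with the preceding paragraph gives $[e^{\varphi_x}]_{A_\infty}\le C$, so $\varphi_x\in E_{x,C,Q}$.

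For $b_{p,C}$ with $p\in(1,2]$, for $B_{2,C}$, and for $A_{\delta,C}$ --- all of which are globally of the form \eqref{slope}, with $u^+$ in the first and third and $u^-$ in the second --- the function $\varphi_x$ above is the optimizer at every $x\in\Omega_C$, and only the functional value remains to be checked; this is routine: the substitution $t=a\,e^{-(s-u(x))/\xi}$ followed by one integration by parts turns $\av{|\varphi_x|^p}Q$ into exactly \eqref{bp}, and elementary integrations give \eqref{B2} and \eqref{Ad} (the hypothesis $\delta<1/\xi^+$ being precisely what makes the integral defining $\av{e^{\delta\varphi_x}}Q$ converge at $t=0$). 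For $b_{1,C}$ the same $\varphi_x$ serves on $\Omega_+$ and, with $(u^-,\xi^-)$, on $\Omega_-$; on $\Omega_0$, where \eqref{b1} is affine, one writes $x$ in barycentric coordinates with respect to the vertex $(0,1)$ and the two tangency points $(\xi^\pm,Ce^{\xi^\pm})$ of $\ell^\pm(0,1)$, and superposes on $(0,1)$ the corresponding pieces --- the constant $0$ and the logarithms $\xi^+\log(1/t)$, $\xi^-\log(1/t)$, with lengths equal to those coordinates --- after which $\av{|\varphi_x|}Q=b_{1,C}(x)$ is a short computation.

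The genuinely hard case is $D_{\lambda,C}$, which I expect to be the main obstacle. On $\Omega_4(\lambda)$ the glued optimizer already takes all its values $\ge\lambda$ (there $u^+(x)\ge\lambda$), so the functional is $1$. On $\Omega_2(\lambda)$ the characteristics are the chords of $\Gamma_1$ through $(\lambda,e^\lambda)$, and the optimizer is the two-valued function taking the values $v(x)$ and $\lambda$ in the barycentric proportions dictated by \eqref{v}; one computes directly that the $A_\infty$-characteristic of a two-valued function equals $C$ along the upper boundary of $\Omega_2(\lambda)$ and is $\le C$ inside, while $\frac1{|Q|}|\{\varphi_x\ge\lambda\}|=\frac{x_1-v}{\lambda-v}$ matches \eqref{wf}. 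On $\Omega_3(\lambda)$, where the candidate is affine, one again superposes the pieces attached to the three vertices $(\lambda,e^\lambda)$ and $(\lambda+\xi^\pm,Ce^{\lambda+\xi^\pm})$. Finally, on $\Omega_1(\lambda)$ (and on the arc of $\Gamma_C$ that bounds it) a single glued optimizer falls short of the candidate --- by a factor bounded below by $e$ --- so the optimizer there must be an infinite, self-similar construction along the $\ell^+$-characteristic: peel off a constant piece $\varphi=u^+(x)$, reduce to the same problem for the point advanced toward the $\Gamma_C$-endpoint of the characteristic, iterate, and pass to the limit; this parallels the extremal functions of \cite{vv} for the weak-type John--Nirenberg inequality on $\BMO^2$. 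Across all these multi-piece and limiting constructions the delicate point is to verify $\varphi_x\in E_{x,C,Q}$, and specifically $[e^{\varphi_x}]_{A_\infty}\le C$: for a single logarithm Lemma~\ref{cutoff} disposes of this instantly, but the $A_\infty$-characteristic of a function built from several pieces is sensitive to their order on $(0,1)$, so the pieces must be arranged with the large values nested toward one endpoint, and one must then bound the characteristic of such a nested configuration by an explicit computation. Checking that the auxiliary parameters ($a$, the barycentric weights, $v(x)$, $u^\pm(x)$) stay in their admissible ranges for all $x$ in each subdomain, and that the functionals match the candidate formulas, is then routine by comparison.
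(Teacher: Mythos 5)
Your overall strategy is the paper's: for each candidate and each $x\in\Omega_C$ exhibit an explicit $\varphi_x\in E_{x,C,Q}$ realizing the candidate value, built along the \ma characteristics from constants and shifted logarithms, with Lemma~\ref{cutoff} and the computation $[e^{\xi^+\varphi_0}]_{A_\infty}=e^{-\xi^+}/(1-\xi^+)=C$ handling admissibility of a single cut-off logarithm. For $b_{p,C}$ with $p\in(1,2]$, for $B_{2,C}$, and for $A_{\delta,C}$ your glued optimizer is exactly the paper's $\varphi^\pm_x$ and your verification sketch (change of variable plus integration by parts; cutoff lemma for admissibility) is the paper's argument. Your observation that on $\Omega_1(\lambda)$ the single glued logarithm undershoots $D_{\lambda,C}$ by a factor at least $e$ is correct and shows you located the genuinely hard case.

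The gaps are precisely where the lemma has its content. First, on $\Omega_1(\lambda)$ you do not actually produce an optimizer: the ``infinite self-similar iteration'' is not carried out, its limit is not identified, and in any case iterating ``peel off the constant $u^+(x)$'' only generates the logarithmic tail of the true optimizer; it does not produce the essential upward jump from the level $\lambda+\xi^--\xi^+$ to the level $\lambda$ that accounts for the missing factor. The paper's optimizer~\eqref{eta1} is a concrete finite concatenation of two constants with the cut-off logarithm $\varphi^+_x$, and verifying $e^{\eta_x}\in\AC$ for it is the single hardest step of the proof: one must handle intervals $J=(c,d)$ straddling the jump, where the point $(\av{\zeta}J,\av{e^{\zeta}}J)$ lies on a chord from $(0,1)$ to a point of $\Gamma_C$ beyond the tangency point, a chord that \emph{leaves} $\Omega_C$ and re-enters it; one must then argue the relevant point sits on the admissible portion. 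Nothing in your proposal addresses this. Second, your optimizers for $b_{1,C}$ on $\Omega_0$ and for $D_{\lambda,C}$ on $\Omega_3(\lambda)$ — superpositions of \emph{two unbounded logarithms} weighted by barycentric coordinates relative to $(0,1)$ and the two tangency points on $\Gamma_C$ — differ from the paper's, which uses bounded three-valued step functions with values at the second intersections of $\ell^\pm(0,1)$ with $\Gamma_1$ (weights $\gamma^\pm$ of~\eqref{gamma}). The difference matters: for the step function, every partial average is a convex combination of two points lying on the tangent segments below $x^\pm$, so membership in $\Omega_0\subset\Omega_C$ is immediate; for your version, partial averages over intervals meeting both logarithmic pieces are near-convex-combinations of points close to the two tangency points of $\Gamma_C$, and the chord joining those tangency points lies \emph{outside} $\Omega_C$, so admissibility is exactly the delicate claim you defer — it needs the same kind of ``exit and re-enter'' analysis and is not ``routine by comparison.'' Until the $\Omega_1(\lambda)$ optimizer is written down in closed form and the $A_\infty$ membership of each composite optimizer is actually verified, the proof of part~(4) (and of part~(1) for $p=1$ on $\Omega_0$, as you have set it up) is incomplete.
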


We can assume $C>1.$ Let $Q=(0,1).$  If $G$ is one of the candidates in Lemma~\ref{main_opt0} and $x\in\Omega_C,$ we say that a function $\varphi_x$ on $Q$ is an optimizer for $G(x)$ if
\eq[optimizer1]{
\varphi_x\in E_{x,C,Q}\quad\text{and}\quad \av{f(\varphi_x)}{Q}=G(x),
}
where $f$ is the function that appears in the definition of the corresponding Bellman function: $f(t)=|t|^p$ for $b_{p,C};$ $f(t)=t^2$ for $B_{2,C};$ $f(t)=e^{\delta t}$ for $A_{\delta,C};$ and $f(t)=\chi^{}_{[\lambda,\infty)}(t)$ for $D_{\lambda,C}.$ Clearly, presenting such an optimizer for each candidate $G$ and each $x\in\Omega_C$ will prove Lemma~\ref{main_opt0}.

We now do just that. Verifying that our claimed optimizers have the required integral averages on $Q$ is straightforward, as they are explicitly designed to have that property (we do not go into details of how such optimizers arise and instead refer the reader to papers~\cite{sv1} and~\cite{iosvz2}). Verifying that each optimizer $\varphi_x$ satisfies $e^{\varphi_x}\in\AC$ is more subtle. Note that this is equivalent to showing that for any interval $J\subset Q$  the point $(\av{\varphi_x}J,\av{e^{\varphi_x}}J)$ lies in $\Omega_C.$ To demonstrate that fact in each case, we closely follow the geometric arguments from Section~7 of~\cite{sv1}, where a similar array of optimizers was shown to be in BMO with a fixed norm.

Recall functions $u^\pm$ given by~\eqref{upm} and function $v$ given by~\eqref{v}.
Our first optimizer will service both $b_{p,C}$ for $1<p\le 2$ and $A_{\delta,C}.$ 
For all $x\in \Omega_C,$ let
\eq[opt+]{
\varphi^+_x(t)=u^+ +\xi^+\log\big({\textstyle\frac{\alpha^+}t}\big)\chi^{}_{(0,\alpha^+)}(t),\quad \text{where}\quad \alpha^+=\frac{x_1-u^+}{\xi^+}.
}

The optimizer for $B_{2,C}$ is obtained by replacing in~\eqref{opt+} $u^+$ and $\xi^+$ with $u^-$ and $\xi^-,$ respectively. Specifically, for $x\in\Omega_C,$ let
\eq[opt-]{
\varphi^-_x(t)=u^-+\xi^-\log\big({\textstyle\frac{\alpha^-}t}\big)\chi^{}_{(0,\alpha^-)}(t), \quad \text{where}\quad \alpha^-=\frac{x_1-u^-}{\xi^-}. 
}

To define the optimizers for $b_{1,C}$ we need to consider three cases.
For $x\in \Omega_+,$ let
\eq[psi+]{
\psi_x=u^+ +(\xi^+-\xi^-)\chi_{(0,\beta^+)},\quad \text{where}\quad \beta^+=\frac{x_1-u^+}{\xi^+-\xi^-}.
}
For $x\in \Omega_-,$ let 
\eq[psi-]{
\psi_x=u^- -(\xi^+-\xi^-)\chi_{(1-\beta^-,1)}, \quad \text{where}\quad \beta^-=\frac{u^--x_1}{\xi^+-\xi^-}.
}
For $x\in \Omega_0,$ let 
\eq[psi0]{
\psi_x=(\xi^+-\xi^-)\big(\chi^{}_{(0,\gamma^+)}-\chi^{}_{(1-\gamma^-,1)}\big),
}
where
\eq[gamma]{
\gamma^\pm=\frac{(x_2-1)(1-\xi^-)(1-\xi^+)-x_1(1-\xi^\pm)}{(\xi^+-\xi^-)^2}.
}

Defining the optimizer for our most sophisticated candidate, $D_{\lambda,C},$ technically requires the consideration of four cases. Fortunately, we can reuse some of the optimizers already defined.

For $x\in\Omega_3(\lambda)\cup\Omega_4(\lambda),$ let
\eq[eta34]{
\eta_x=\lambda+\psi_{(x_1-\lambda,x_2e^{-\lambda})},
}
where $\psi_{(x_1-\lambda,x_2e^{-\lambda})}$ is given by~\eqref{psi0}-\eqref{gamma} if $x\in\Omega_3(\lambda)$ and by~\eqref{psi+} if $x\in\Omega_4(\lambda),$ in both cases with $x$ replaced by $(x_1-\lambda,x_2e^{-\lambda}).$ (Note that $\Omega_3(\lambda)$ and $\Omega_4(\lambda)$ are the images under the transformation $x\mapsto (x_1+\lambda,x_2e^{\lambda})$ of $\Omega_0$ and $\Omega_+,$ respectively.)

For $x\in\Omega_2(\lambda),$ let
\eq[eta2]{
\eta_x=\lambda\chi^{}_{(0,\mu)}+v\chi^{}_{(\mu,1)},\quad\text{where}\quad \mu=\frac{x_1-v}{\lambda-v}
}

For $x\in\Omega_1(\lambda),$ let
\eq[eta1]{
\eta_x=\lambda \chi^{}_{(0,\tau\beta^+)}+(\lambda+\xi^--\xi^+)\chi^{}_{(\tau\beta^+,\tau\alpha^+)}+
\varphi^+_x\,\chi^{}_{(\tau\alpha^+,1)},
}
where
\eq[eta1.1]{
\tau=e^{(u^++\xi^+-\xi^--\lambda)/\xi^+},\quad \beta^+=\frac{x_1-u^+}{\xi^+-\xi^-},
}
and $\varphi^+_x$ is given by~\eqref{opt+}.

We can now prove the following lemma, which will immediately imply Lemma~\ref{main_opt0}.

\begin{lemma}
\label{main_opt}
~
\ben[leftmargin=*]
\item[\bf (1)]
The function $\varphi^+_x$ given by~\eqref{opt+} is an optimizer for $b_{p,C}(x),$ $1<p\le 2,$ and for $A_{\delta,C}(x).$
\item[\bf (2)]
The function $\varphi^-_x$ given by~\eqref{opt-} is an optimizer for $B_{2,C}(x).$
\item[\bf (3)]
The function $\psi_x$ given by~\eqref{psi+},~\eqref{psi-}, and~\eqref{psi0}-\eqref{gamma}, is an optimizer for $b_{1,C}(x).$
\item[\bf (4)]
The function $\eta_x$ given by~\eqref{eta34},~\eqref{eta2}, and~\eqref{eta1}-\eqref{eta1.1}, is an optimizer for $D_{\lambda,C}(x).$
\een
\end{lemma}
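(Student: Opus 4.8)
\textbf{Proof proposal for Lemma~\ref{main_opt}.}

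The plan is to treat each of the four families of optimizers separately, but in every case the argument splits into the same two parts: (a) verify that the claimed function $\varphi_x$ has the correct averages, i.e. $\av{\varphi_x}Q=x_1$, $\av{e^{\varphi_x}}Q=x_2$, and $\av{f(\varphi_x)}Q=G(x)$ for the appropriate $f$; and (b) verify the constraint $e^{\varphi_x}\in\AC$, which (as noted in the text) reduces to showing that for every subinterval $J\subset Q$ the point $(\av{\varphi_x}J,\av{e^{\varphi_x}}J)$ lies in $\Omega_C$. Part (a) is a direct computation: each optimizer is a logarithm (possibly truncated and shifted) or a two-valued/three-valued step function, so the integrals $\int_0^1 e^{\varphi_x}$, $\int_0^1\varphi_x$, $\int_0^1|\varphi_x|^p$, $\int_0^1 e^{\delta\varphi_x}$, and $|\{\varphi_x\ge\lambda\}|$ are all elementary; one plugs in the explicit values of $\alpha^\pm,\beta^\pm,\gamma^\pm,\mu,\tau$ and matches against the closed forms~\eqref{bp},~\eqref{b2},~\eqref{B2},~\eqref{b1},~\eqref{Ad},~\eqref{wf}. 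Since these are "explicitly designed to have that property," I would carry out one representative case (say $\varphi^+_x$ against $A_{\delta,C}$ and $b_{p,C}$) in full and indicate that the others are analogous. The key identities to keep at hand are the defining relations~\eqref{upm} for $u^\pm$, \eqref{xi2} for $\xi^\pm$ (in the form $e^{-\xi^\pm}=C(1-\xi^\pm)$, and $e^{\xi^+-\xi^-}=(1-\xi^-)/(1-\xi^+)$), and~\eqref{v} for $v$.

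For part (b) --- the genuinely substantive part --- the approach is geometric, following Section~7 of~\cite{sv1}. For the logarithmic optimizer $\varphi^+_x$ of~\eqref{opt+}, on $(0,\alpha^+)$ the function is $\varphi^+_x(t)=u^++\xi^+\log(\alpha^+/t)$, which is exactly $\varphi_0$ rescaled, so Lemma~\ref{phi0} (more precisely the computation behind~\eqref{char}) shows that the curve $J\mapsto(\av{\varphi^+_x}J,\av{e^{\varphi^+_x}}J)$ as $J$ ranges over subintervals is controlled: for $J\subset(0,\alpha^+)$ the relevant averages satisfy $\av{e^{\varphi^+_x}}J e^{-\av{\varphi^+_x}J}\le e^{-\xi^+}/(1-\xi^+)=C$, and for $J$ meeting the constant region $[\alpha^+,1]$ one uses convexity/concavity along the chord. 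The clean way to organize this is: the point $(\av{\varphi^+_x}J,\av{e^{\varphi^+_x}}J)$ always lies on the chord of $\Gamma_C$ (or inside $\Omega_C$) determined by the image of the two "ends" of $J$; since the full trajectory of $\varphi^+_x$ traces the tangent line $\ell^+(x)$ from $(u^+,e^{u^+})\in\Gamma_1$ to the tangency point $(u^++\xi^+,\,\cdot\,)\in\Gamma_C$, and $x$ itself lies on that segment, every sub-average is a convex combination of points on that segment together with the boundary point $(\varphi^+_x(t),e^{\varphi^+_x(t)})$ on $\Gamma_1$, all of which stay in $\Omega_C$. For the step-function optimizers $\psi_x$ and $\eta_x$ one argues similarly but now tracking which "level" each endpoint of $J$ falls on; the trajectory is a broken path whose vertices lie on $\Gamma_1$ and on the tangents $\ell^\pm(0,1)$ or $\ell^\pm(\lambda,e^\lambda)$, and one checks that every chord between consecutive accessible states stays in $\Omega_C$. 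The case $\eta_x$ on $\Omega_2(\lambda)$ is special: there the optimizer is the two-valued function $\lambda\chi_{(0,\mu)}+v\chi_{(\mu,1)}$ with $v=v(x)$ defined by~\eqref{v}, so the segment joining $(\lambda,e^\lambda)$ to $(v,e^v)$ is a chord of $\Gamma_1$ itself, and one must check this chord (and its sub-chords, obtained for $J\subset(0,1)$) never pokes above $\Gamma_C$ --- this follows because $v\in[\lambda+\xi^--\xi^+,\lambda]$ exactly says the chord's highest point stays below the tangent from $(\lambda,e^\lambda)$ to $\Gamma_C$.

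I expect the main obstacle to be the bookkeeping in part (b) for the weak-type optimizer $\eta_x$: there are four subdomains $\Omega_1(\lambda),\dots,\Omega_4(\lambda)$, the optimizer is reused from $\psi$ and $\varphi^+$ after an affine shift $x\mapsto(x_1-\lambda,x_2e^{-\lambda})$, and on $\Omega_1(\lambda)$ it is genuinely three-valued with two cut-points $\tau\beta^+<\tau\alpha^+$ and a logarithmic tail. One must verify that the three "accessible vertices" --- the point $(\lambda,e^\lambda)$, the point on the tangent $\ell^+(\lambda+\xi^--\xi^+,e^{\lambda+\xi^--\xi^+})$, and points of $\Gamma_1$ along the tail --- generate, via all possible sub-averages over $J\subset(0,1)$, only points of $\Omega_C$; the worst case is a $J$ that straddles all three pieces, and one reduces it to the two endpoint-chords using local concavity. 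The affine-shift observation (that $\Omega_3(\lambda),\Omega_4(\lambda)$ are images of $\Omega_0,\Omega_+$ under $x\mapsto(x_1+\lambda,x_2e^\lambda)$, and that this map sends $\Gamma_R$ to $\Gamma_{Re^0}=\Gamma_R$... in fact sends $\Gamma_1\to\Gamma_1$ and $\Gamma_C\to\Gamma_C$) is what makes the $\Omega_3,\Omega_4$ cases immediate from part (3) and part (1); I would state that reduction explicitly up front to cut the work roughly in half. Once all four optimizers are shown to be admissible and to attain the stated values, Lemma~\ref{main_opt0}, and hence Theorem~\ref{all_bellman}, follows at once.
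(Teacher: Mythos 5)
Your overall strategy coincides with the paper's: compute the averages explicitly, then verify $e^{\varphi_x}\in\AC$ by locating the points $(\av{\varphi_x}{J},\av{e^{\varphi_x}}{J})$ geometrically. The computational half is fine. The gap is in the admissibility half, where your argument repeatedly reduces to the claim that a convex combination of points of $\Omega_C$ (endpoints on $\Gamma_1$, on a tangent segment, or on $\Gamma_C$) again lies in $\Omega_C$. That principle is false: $\Omega_C$ is the part of the region below the graph of the convex function $x_1\mapsto Ce^{x_1}$, hence not convex, and a chord between two points of $\Omega_C$ can poke out through $\Gamma_C$. This is not a hypothetical worry. In the hardest case, $x\in\Omega_1(\lambda)$ with $J=(c,d)$ straddling all three pieces of $\eta_x$, the relevant chord genuinely does leave the domain: the paper's proof shows that $F=(\av{\zeta}{(0,d)},\av{e^{\zeta}}{(0,d)})$ lies on $\Gamma_C$ to the left of the tangency point of $\ell^-{(0,1)}$, so the ray from $E=(0,1)$ through $F$ first exits $\Omega_C$ and re-enters it precisely at $F$; since $F$ is a convex combination of $E$ and the sub-average $U$, the point $U$ sits beyond $F$ on that ray and is therefore back inside. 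Your phrase ``one reduces it to the two endpoint-chords using local concavity'' does not supply this; local concavity of the candidate function is irrelevant to whether the averaging point stays in the domain.

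The same issue already appears for $\varphi^+_x$ when $J$ straddles $\alpha^+$: one must check that the chord from $(u^+,e^{u^+})$ to the average over the logarithmic piece stays below $\Gamma_C$, and ``convexity along the chord'' does not give that. The paper sidesteps every such straddling case for the logarithmic optimizers by invoking Lemma~\ref{cutoff}: $\varphi^+_x$ is the cut-off from below, at height $u^+$, of the function $u^++\xi^+\log(\alpha^+/t)$ defined on all of $(0,1)$, whose exponential has $A_\infty$-characteristic exactly $C$ by \eqref{char}, and cutting off the logarithm can only decrease the characteristic. You cite Lemma~\ref{phi0} and \eqref{char} but never Lemma~\ref{cutoff}; adding it repairs the $\varphi^+_x$, $\varphi^-_x$ and part of the $\eta_x$ arguments, but the straddling case in $\Omega_1(\lambda)$ and the $\Omega_0$ case for $\psi_x$ still require the explicit chord-location analysis (as in the paper) rather than an appeal to convex combinations.
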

\begin{proof}
%\begin{enumerate}[leftmargin=*]
%\item

\noindent {\bf (1)} For any differentiable and appropriately integrable function $f$ we have
\begin{align*}
\av{f(\varphi^+_x)}Q&=f(u^+)(1-\alpha^+)+\int_0^{\alpha^+}\!\! f\big(u^++\xi^+\log(\alpha^+/t)\big)\,dt\\
&=f(u^+)(1-\alpha^+)+\frac1{\xi^+}\,e^{u^+/\xi^+}\alpha^+\int_{u^+}^\infty f(s) e^{-s/\xi^+}\,ds\\
&=\frac1{\xi^+}\,e^{u^+/\xi^+}\Big[\int_{u^+}^\infty f'(s) e^{-s/\xi^+}\,ds\Big](x_1-u^+)+f(u^+),
\end{align*}
with the first equality due to a change of variable and the second, to integration by parts.

Now, as the reader can check, $f(s)=s$ gives $\av{\varphi^+_x}Q=x_1;$ $f(s)=e^s$ gives $\av{e^{\varphi^+_x}}Q=x_2;$ $f(s)=|s|^p$ gives
$\av{|\varphi^+_x|^p}Q=b_{p,C}(x);$ and $f(s)=e^{\delta s}$ gives $\av{e^{\delta\varphi^+_x}}Q=A_{\delta,C}(x),$ if $\delta<1/\xi^+,$ and $\av{e^{\delta\varphi^+_x}}Q=\infty,$ if $\delta\ge 1/\xi^+$ (unless $x_2=e^{x_1},$ in which case $u^+=x_1$ and $\alpha^+=0,$ so $\av{e^{\delta\varphi^+_x}}Q=e^{\delta x_1}$).

Observe that $\varphi_x^+$ is the cut-off from below, at height $u^+,$ of the function $u^++\xi^+\log(\alpha^+)+\xi^+\varphi_0(t),$ where $\varphi_0(t)=\log(1/t)$ from Lemma~\ref{phi0}. By~\eqref{char} from the proof of that lemma, $[e^{\xi^+\varphi_0}]_{A_\infty}=e^{-\xi^+}/(1-\xi^+)=C.$ Therefore, by Lemma~\ref{cutoff}, $e^{\varphi_x^+}\in \AC.$  
\medskip

%\item
\noindent {\bf (2)} This is a repetition of the argument just given for $b_{2,C},$ with $\xi^-,$ $u^-,$ and $\alpha^-$ replacing $\xi^+,$ $u^+,$ and $\alpha^+,$ respectively.
\medskip

%\item
\noindent {\bf (3)}
The cases $x\in \Omega_+$ and $x\in \Omega_-$ are symmetric, so we only consider the first one. Here we have $\psi_x\ge0,$ so $\av{\psi_x}Q=\av{|\psi_x|}Q=u^++(\xi^+-\xi^-)\beta^+=x_1.$ Furthermore, 
$
\av{e^{\psi_x}}Q=e^{u^+}(1-\beta^++e^{\xi^+-\xi^-}\beta^+)=x_2,
$
where we used the identity $e^{\xi^+-\xi^-}=(1-\xi^-)/(1-\xi^+).$ 

To see that $\psi_x\in \AC,$ note that $\psi_x$ is a concatenation of two constant functions $u^++\xi^+-\xi^-$ and 
$u^+.$ Therefore, for any interval $J\subset Q$ the point $(\av{\psi_x}J,\av{e^{\psi_x}}J)$ is a convex combination of the two points where the tangent $\ell^+(x)$ intersects $\Gamma_1,$ hence it lies in $\Omega_C.$

Now, assume that $x\in \Omega_0.$ Then $\av{\psi_x}Q=(\xi^+-\xi^-)(\gamma^+-\gamma^-)=x_1,$ $\av{|\psi_x|}Q=(\xi^+-\xi^-)(\gamma^++\gamma^-)=b_{1,C}(x),$ and
$
\av{e^{\psi_x}}Q=1+\gamma^+(e^{\xi^+-\xi^-}-1)-\gamma^-(e^{\xi^--\xi^+}-1)=x_2.
$

To show that $\psi_x\in \AC,$ draw a line $\ell$ through $x$ so that it intersects both $\ell^-{(0,1)}$ and $\ell^+{(0,1)};$ call the points of intersection $x^-$ and $x^+,$ respectively. Note that it is always possible to draw $\ell$ so that the segment $[x^-,x^+]\subset \Omega_0,$ so assume that is the case. Let $\delta =\frac{x_1-x_1^-}{x_1^+-x_1^-};$ it is easy to verify that $\delta\in[\gamma^+,1-\gamma^-].$ Take an interval $J=(c,d)\subset Q.$ If $J\subset (0,\delta),$ then the point $U:=(\av{\psi_x}J,\av{e^{\psi_x}}J)$ is a convex combination of the points $(0,1)$ and $(\xi^+-\xi^-,e^{\xi^+-\xi^-})$ and thus in $\Omega_C;$ and similarly for the case $J\subset(\delta,1).$ Assume now that $c\le \gamma^+<1-\gamma^-\le d.$ In this case, $U$ is a convex combination of $V:=(\av{\psi_x}{(c,\delta)},\av{e^{\psi_x}}{(c,\delta)})$ and $W:=(\av{\psi_x}{(\delta,d)},\av{e^{\psi_x}}{(\delta,d)}).$ Moreover, $V$ lies on $\ell^+{(0,1)},$ below $x^+;$ similarly, $W$ is on $\ell^-{(0,1)},$ below $x^-.$ We conclude that $[V,W]\subset \Omega_0$ and so $U\in \Omega_0\subset\Omega_C.$
\medskip

%\item 
\noindent {\bf (4)}
Observe that $\eta_x$ is an optimizer for $D_{0,C}(x)$ if and only if $\lambda+\eta_{(x_1+\lambda,x_2e^{\lambda})}$ is an optimizer for $D_{\lambda,C}(x_1+\lambda,x_2e^\lambda).$ Thus, it suffices to consider the case $\lambda=0.$ 

Setting $\lambda=0$ gives $\Omega_4(0)=\Omega_+,$ $\Omega_3(0)=\Omega_0,$ and $\eta_x=\psi_x$ if $x\in \Omega_3(0)\cup\Omega_4(0).$ We have already verified that $\av{\psi_x}Q=x_1,$ $\av{e^{\psi_x}}Q=x_2,$ and $e^{\psi_x}\in \AC.$ If $x\in \Omega_+,$ then $\psi_x\ge0,$ so $|\{\psi_x\ge0\}|=1=D_{0,C}(x).$ If $x\in \Omega_0,$ then 
$|\{\psi_x\ge0\}|=1-\gamma^-=D_{0,C}(x).$

Assume $x\in \Omega_2(0).$ Since $\lambda=0,$ the defining equation for $v$ becomes $x_1/v=x_2e^{-v}$ and we have $\mu=D_{0,C}(x)=1-x_1/v.$ Now, $\av{\eta_x}Q=v(1-\mu)=x_1,$ 
$\av{e^{\eta_x}}Q=\mu+e^{v}(1-\mu)=x_2,$ and $|\{\eta_x\ge0\}|=\mu=D_{0,C}(x).$ To show that $e^{\eta_x}\in \AC,$ note that $\eta_x$ is a concatenation of the constant functions $\lambda$ and $v,$ and, thus, for any interval $J\subset Q$ the point 
$(\av{\eta_x}J,\av{e^{\eta_x}}J)$ lies on the line segment connecting the points $(\lambda,e^\lambda)$ and $(v,e^v).$ In turn, this segment lies in $\Omega_2(0)\subset \Omega_C.$

Finally, assume that $x\in\Omega_1(0).$ We have
$$
\eta_x=(\xi^--\xi^+)\chi^{}_{(\tau\beta^+,\tau\alpha^+)}+
(u^+ +\xi^+\log(\alpha^+/t))\chi^{}_{(\tau\alpha^+,\alpha^+)}(t)+u^+\chi^{}_{(\alpha^+,1)},
$$
so,
\eq[e1]{
\begin{split}
\av{\eta_x}Q&=(\xi^-\!\!-\xi^+)\tau(\alpha^+\!\!-\beta^+)+\xi^+\int_{\tau\alpha^+}^{\alpha^+}\log(\alpha^+/t)\,dt
+u^+(1-\tau\alpha^+)\\
&=(\xi^-\!\!-\xi^+)\tau(\alpha^+\!\!-\beta^+)+\alpha^+\xi^+(1+\tau(\log\tau-1))+u^+(1-\tau\alpha^+)\\
&=\tau(-\xi^+\alpha^+-(\xi^--\xi^+)\beta^+))+\alpha^+\xi^++u^+=x_1,
\end{split}
}
\begin{align}
\label{e2}
\notag \av{e^{\eta_x}}Q&=\tau\beta^++e^{\xi^-\!\!-\xi^+}\tau(\alpha^+\!\!-\beta^+)+
e^{u^+}\Big(1-\alpha^++\int_{\tau\alpha^+}^{\alpha^+}(\alpha^+/t)^{\xi^+}dt\Big)\\
&=\tau\Big(\beta^++e^{\xi^-\!\!-\xi^+}(\alpha^+\!\!-\beta^+)+\alpha^+\frac{e^{u^+}\tau^{-\xi^+}}{1-\xi^+}\Big)
+e^{u^+}\Big(1-\alpha^++\frac{\alpha^+}{1-\xi^+}\Big)\\
\notag &=\tau\alpha^+\Big(\frac{\xi^+}{\xi^+-\xi^-}\Big(1-\frac{1-\xi^+}{1-\xi^-}\Big)+\frac{\xi^+}{1-\xi^-}\Big)
+e^{u^+}\Big(1+\frac{x_1-u^+}{1-\xi^+}\Big)=x_2
\end{align}

(where the second to last equality uses the identity $e^{u^+}\tau^{-\xi^+}=e^{\xi^--\xi^+}=\frac{1-\xi^+}{1-\xi^-}$), and
$$
|\{\eta_x\ge0\}|=|(0,\tau\beta^+)|=\frac{x_1-u}{\xi^+-\xi^-}\,e^{(u^++\xi^+-\xi^-)/\xi^+}=D_{0,C}(x).
$$ 

To show that $e^{\eta_x}\in \AC,$ note that $\eta_x$ is the cut-off from below, at height $u^+,$ of the function
$$
\zeta(t):=(\xi^--\xi^+)\chi^{}_{(\tau\beta^+,\tau\alpha^+)}(t)+
(u^+ +\xi^+\log(\alpha^+/t))\chi^{}_{(\tau\alpha^+,1)}(t),
$$
so by Lemma~\ref{cutoff} it is enough to show that $e^{\zeta}\in \AC.$ Take an interval $J=(c,d)\subset Q$ and let $U=(\av{\zeta}J,\av{e^{\zeta}}J).$ If $J\subset(0,\tau\alpha^+),$ then $U$ is a convex combination of the points $(0,1)$ and $(\xi^--\xi^+,e^{\xi^--\xi^+})$ and thus lies on the tangent $\ell^-{(0,1)}.$ 

If $J\subset (\tau\beta^+,1),$ then we can write $U=(\av{\omega}J,\av{e^{\omega}}J)$ where $\omega$ is the cut-off from above, at height $\xi^--\xi^+,$ of the function $u^++\xi^+\log(\alpha^+/t).$ That function was treated in part~(1) above. Thus, again by Lemma~\ref{cutoff}, $U\in\Omega_C.$ 

Lastly, assume that $c\le \tau\beta^+ <\tau\alpha^+\le d.$ Write $E=(\av{\zeta}{(0,c)},\av{e^{\zeta}}{(0,c)})$ and $F=
(\av{\zeta}{(0,d)},\av{e^{\zeta}}{(0,d)}).$ Then $E=(0,1),$ while a direct computation similar to~\eqref{e1}-\eqref{e2} gives
$$
F=\Big(u^++\xi^++\xi^+\log\big(\textstyle\frac{\alpha^+}d\big),\frac1{1-\xi^+}e^{u^+}(\frac{\alpha^+} d)^{\xi^+}\Big)=:(F_1,F_2).
$$
Note that $F_2e^{-F_1}=C$ and $F_1\le u^++\xi^++\xi^+\log(1/\tau)=\xi^-.$ Therefore, $F$ lies on the upper boundary 
$\Gamma_C$ and to the left of the point $(\xi^-,Ce^{\xi^-})$ at which the line $\ell^-{(0,1)}$ is tangent to $\Gamma_C.$ This means that the ray from $E$ through $F$ first exits $\Omega_C$ and then re-enters it at $F.$ Since $F$ is a convex combination of $E$ and $U,$ $U$ lies on the same ray, to the left of $F$ and, thus, in $\Omega_C.$
%\een
The proof is complete.
\end{proof}

\newpage


\begin{thebibliography}{99}
\bibitem{css}
M.~Cwikel, Y.~Sagher, P.~Shvartsman.
A new look at the John--Nirenberg and John--Str\"omberg theorems for BMO.
{\it J. Funct. Anal.}, Vol.~263 (2012), no.~1, pp.~129--166
\bibitem{gj}
J. Garnett, P. Jones. The distance in $\BMO$ to $L^\infty.$ {\it Annals of Mathematics.} Vol.~108 (1978), pp.~373--393
\bibitem{gehring}
F. W. Gehring. The $L^p$-integrability of the partial derivatives of a quasiconformal mapping. 
{\it Acta Math.} 130 (1973), pp.~265--277
\bibitem{hs}
H. Helson, G. Szeg\"o. A problem in prediction theory. {\it Ann. Mat. Pura Appl.} (4) 51 1960, pp.~107--138
\bibitem{iosvz1}
P.~Ivanishvili, N.~Osipov, D.~ Stolyarov, V.~Vasyunin, P.~Zatitskiy, On Bellman function for extremal problems in BMO. {\it C. R. Math. Acad. Sci. Paris} 350 (2012), No.~11-12, pp.~561--564
\bibitem{iosvz2}
P. Ivanishvili, N. Osipov, D. Stolyarov, V.Vasyunin, P. Zatitskiy. Bellman function for
extremal problems in BMO. To appear in {\it Transactions of the AMS}, pp.~1-91, \url{http://arxiv.org/pdf/1205.7018.pdf}
\bibitem{jn}
F. John, L. Nirenberg. On functions of bounded mean oscillation. {\it Comm. Pure Appl. Math.} 14 1961, pp.~415--426
\bibitem{knese}
G.~Knese. Uchiyama's lemma and the John-Nirenberg inequality. {\it Bull. Lond. Math. Soc.}, Vo.~45 (2013), no.~4, pp.~683--692
\bibitem{korenovsky}
A. Korenovskii. The connection between mean oscillations and exact exponents of summability of functions. {\it Math. USSR-Sb.}, Vol. 71 (1992), no.~2, pp.~561--567
\bibitem{lerner}
A.~Lerner. The John--Nirenberg inequality with sharp constants.
{\it C. R. Math. Acad. Sci. Paris}, Vol.~351 (2013), no.~11-12, pp.~463--466
\bibitem{weakBMO}
A. A.~Logunov, L.~Slavin, D. M.~Stolyarov, V.~Vasyunin, P. B.~Zatitskiy. Weak integral conditions for BMO. 
{\it Proc. Amer. Math. Soc.}, Vol.~143 (2015), no.~7, pp.~2913--2926
\bibitem{ose1}
A. Os\c ekowski. Sharp inequalities for BMO functions. {\it Chinese Annals of Mathematics}, Series B 36 (2015), pp.~225--236
\bibitem{ose2}
A. Os\c ekowski. Sharp estimates for Lipschitz class. To appear in {\it Journal of Geometric Analysis}. pp.~1--29.
\bibitem{ose3}
A. Os\c ekowski. Sharp inequalities for the dyadic square function in the BMO setting. {\it Acta Mathematica Hungarica}, 139 (2013), pp. 85--105
\bibitem{reznikov}
A. Reznikov. Sharp weak type estimates for weights in the class $A_{p_1,p_2}.$ {\it Rev. Mat. Iberoam.}, Vol.~29 (2013), No.~2, pp.~433--478
\bibitem{sv}
L. Slavin, V. Vasyunin. Sharp results in the integral-form 
John--Nirenberg inequality. {\it Trans. Amer. Math. Soc.}, Vol.~363, No.~8 (2011), pp.~4135--4169
\bibitem{sv1}
L. Slavin, V. Vasyunin. Sharp $L^p$ estimates on BMO. {\it Indiana Univ. Math. J.}, Vol.~61 (2012), No.~3, pp.~1051--1110
\bibitem{sv2}
L. Slavin, V. Vasyunin. Inequalities for BMO on $\alpha$-trees. Submitted. Available at arXiv:1501.00097
\bibitem{stromberg}
J.-O. Str\"omberg. Bounded mean oscillation with Orlicz norms and duality of Hardy spaces. {\it Indiana Univ. Math. J.}, Vol. 28 (1979), no. 3, pp.~511--544
\bibitem{sz}
D. Stolyarov, P. Zatitskiy. Theory of locally concave functions and its applications to sharp estimates of integral functionals. Submitted. Available at arXiv:1412.5350v1
\bibitem{v1}
V. Vasyunin. The sharp constant in the reverse H\"older inequality 
for Muckenhoupt weights. {\it Algebra i Analiz}, {\bf15} (2003), 
No.~1, pp.~73--117 (Russian); English translation in: {\it St. 
Petersburg Math. J.}, {\bf15} (2004), No.1, pp.~49--79
\bibitem{v2}
V.~Vasyunin. Mutual estimates for $L^p$-norms and the Bellman function. (Russian. English, Russian summary) Zap. Nauchn. Sem. S.-Peterburg. Otdel. Mat. Inst. Steklov. (POMI) 355 (2008), Issledovaniya po Lineinym Operatoram i Teorii Funktsii. 36, 81--138, 237--238; translation in 
J. Math. Sci. (N. Y.) 156 (2009), no. 5, pp.~766--798
\bibitem{vv}
V. Vasyunin, A. Volberg. Sharp constants in the classical weak form of the John--Nirenberg inequality. {\it Proc. Lond. Math. Soc.~(3)}, Vol.~108 (2014), No.~6, pp.~1417--1434
%\bibitem[VV2]{vv2}
%V. Vasyunin, A. Volberg. Monge--Amp\`{e}re equation and Bellman 
%optimization of Carleson Embedding Theorems. Linear and complex 
%analysis, pp.~195--238, Amer. Math. Soc. Transl. Ser. 2, {\bf226}, 
%Amer. Math. Soc., Providence, RI, 2009.
%\bibitem[Vol]{vol}
%A. Volberg. Bellman approach to some problems in harmonic analysis. 
%{\it \'{E}qua\-tions aux D\'{e}riv\'{e}es Partielles}, Expos\'{e} n.~XX, 
%2002.
%\bibitem[Vas]{vas}
%V. Vasyunin. Cincinnati lectures on Bellman functions (2011). Available at \url{http://homepages.uc.edu/~slavinld/TRS/cincinnati_bellman_lectures.pdf}
%\bibitem[Vol]{vol}
%A. Volberg, Bellman function technique in Harmonic Analysis. Lectures of
%INRIA Summer School in Antibes, June 2011. Preprint (2011) available at
%arXiv:1106.3899
\end{thebibliography}
\end{document}